\font\logic=msam10 at 10pt
\newcommand{\restrict}{\mbox{\logic\char'026}}
\newcommand{\less}{\mathord{<}}
\newcommand{\ms}{\mathrm{ms}}
\newtheorem{thrm}{Theorem}[section]
\newtheorem{lem}[thrm]{Lemma}
\newtheorem{cor}[thrm]{Corollary}
\newtheorem{prop}[thrm]{Proposition}
\newtheoremstyle{hdefinition}%
  {\topsep}%
  {\topsep}%
  {\upshape}
  {}%
  {\bfseries}%
  {.}
  { }%
  {\thmnumber{#2 }\thmname{#1}\thmnote{ \rm(#3)}}%
\newtheoremstyle{hclaim}%
  {\topsep}%
  {\topsep}%
  {\itshape}%
  {}%
  {\bfseries}%
  {.}
  { }%
  {\thmname{#1}\thmnote{ \rm#3}}%
\theoremstyle{hclaim}
\newtheorem*{claim*}{Claim}
\theoremstyle{hdefinition}
\newtheorem{df}[thrm]{Definition}
\newtheorem{example}[thrm]{Example}
\newtheorem{conj}[thrm]{Conjecture}
\newtheorem{remark}[thrm]{Remark}
\newtheorem{ques}[thrm]{Question}
\newtheorem{prob}[thrm]{Problem}
\theoremstyle{hclaim}
\newcommand{\Ord}{\mathrm{Ord}}
\newcommand{\Col}{\mathrm{Col}}
\newcommand{\cF}{\mathcal{I}}
\newcommand{\cI}{\mathcal{I}}
\newcommand{\cL}{\mathcal{L}}
\newcommand{\range}{\mathrm{range}}
\begin{document}

\title{Scott processes}
\author{Paul B. Larson\thanks{Research supported in part by NSF Grants
  DMS-0801009 and DMS-1201494. The author thanks John Baldwin for his many comments on earlier drafts, and Leo Harrington for discussing the
  material in this paper with him.}}

\maketitle

\begin{abstract}
The Scott process of a relational structure $M$ is the sequence of sets of formulas given by the Scott analysis of $M$.
We present axioms for the class of Scott processes of structures in a relational vocabulary $\tau$, and use them to give a proof of an unpublished theorem of Leo Harrington from the 1970's, showing that a counterexample to Vaught's Conjecture
has models of cofinally many Scott ranks below $\omega_{2}$. Our approach also gives a theorem of Harnik and Makkai, showing that if there exists a counterexample to Vaught's Conjecture, then there is a counterexample whose uncountable models have the same $\cL_{\omega_{1}, \omega}(\tau)$-theory, and which has a model of Scott rank $\omega_{1}$.
Moreover, we show that if $\phi$ is a sentence of $\cL_{\omega_{1}, \omega}(\tau)$ giving
rise to a counterexample to Vaught's Conjecture, then for every limit ordinal $\alpha$ greater than the quantifier depth of $\phi$ and below $\omega_{2}$, $\phi$ has a model of Scott rank $\alpha$.
\end{abstract}

\section{Introduction}

We fix for this paper a
relational vocabulary $\tau$, and distinct variable symbols $\{ x_{n} : n < \omega\}$. For notational convenience, we assume that $\tau$ contains a $0$-ary relation symbol, as well as the binary symbol $=$, which is always interpreted as equality.
We refer the reader to \cite{Keisler, Hodges, Marker} for the definition of the language $\cL_{\infty, \omega}(\tau)$ and the languages $\cL_{\kappa, \omega}(\tau)$, for $\kappa$ an infinite cardinal. In this paper, all formulas will have only finitely many free variables. Formally, we consider conjunctions and disjunctions of formulas as unordered, even when we write them as indexed by an ordered set (in this way, for instance, a formula in $\cL_{\omega_{2}, \omega}(\tau)$ becomes a member of $\cL_{\omega_{1}, \omega}(\tau)$ in a forcing extension in which the $\omega_{1}$ of the ground model is countable).
 We begin by recalling the standard definition of the Scott process corresponding to a $\tau$-structure $M$ (see \cite{Hodges, Marker}), slightly modified to require the sequences $\bar{a}$ to consist of distinct elements.

\begin{df}\label{Scottformdef} Given a $\tau$-structure $M$ over a relational vocabulary $\tau$, we define for each finite ordered tuple $\bar{a} = \langle a_{0},\ldots,a_{|\bar{a}|-1}\rangle$ of distinct elements of $M$ and each ordinal $\alpha$
the $|\bar{a}|$-ary formula $\phi^{M}_{\bar{a}, \alpha} \in \cL_{|M|^{+}, \omega}(\tau)$, as follows.
\begin{enumerate}
\item\label{Sdefone} Each formula $\phi^{M}_{\bar{a}, 0}$ is the conjunction of all expressions of the two following forms:
\begin{itemize}
\item $R(x_{f(0)},\ldots,x_{f(k-1)})$, for $R$ a $k$-ary relation symbol from $\tau$ and $f$ a function from $k$ to $|a|$, such that $M \models R(a_{f(0)},\ldots,a_{f(k-1)})$,
\item $\neg R(x_{f(0)},\ldots,x_{f(k-1)})$, for $R$ a $k$-ary relation symbol from $\tau$ and $f$ a function from $k$ to $|a|$, such that $M \models \neg R(a_{f(0)},\ldots,a_{f(k-1)})$.
\end{itemize}
\item Each formula $\phi^{M}_{\bar{a}, \alpha + 1}$ is the conjunction of the following three formulas:
\begin{itemize}
\item $\phi^{M}_{\bar{a}, \alpha}$,
\item $\bigwedge_{c \in M \setminus \{a_{0},\ldots,a_{|\bar{a}|-1}\}} \exists x_{|a|} \phi^{M}_{\bar{a}^{\frown}\langle c \rangle,\alpha}$,
\item $\forall x_{|\bar{a}|} \not\in \{x_{0},\ldots,x_{|a|-1}\} \bigvee_{c \in M \setminus \{a_{0},\ldots,a_{|\bar{a}|-1}\}} \phi^{M}_{\bar{a}{\frown}\langle c\rangle,\alpha}$.
\end{itemize}
\item For limit ordinals $\beta$, $\phi^{M}_{\bar{a},\beta} = \bigwedge_{\alpha < \beta}\phi^{M}_{\bar{a},\alpha}$.
\end{enumerate}
We call $\phi^{M}_{\bar{a}, \alpha}$ the \emph{Scott formula} of $\bar{a}$ \emph{in} $M$ \emph{at level} $\alpha$.
\end{df}


The following well-known fact can be proved by induction on $\alpha$ (see Theorem 3.5.2 of \cite{Hodges}). Again, we refer the reader to \cite{Keisler, Hodges, Marker} for the definition of the \emph{quantifier depth} of a formula, and note that each formula $\phi^{M}_{\bar{a},\alpha}$ as defined above has quantifier depth exactly $\alpha$.

\begin{thrm}\label{wkh} Given $\tau$-structures $M$ and $N$, $n \in \omega$, an ordinal $\alpha$ and $n$-tuples $\bar{a}$ from $M$ and $\bar{b}$ from $N$, each consisting of distinct elements, $\phi^{M}_{\bar{a},\alpha} = \phi^{N}_{\bar{b}, \alpha}$ if and only if, for each $n$-ary $\cL_{\infty, \omega}(\tau)$ formula $\psi$ of
quantifier depth at most $\alpha$, $\bar{a}$ satisfies $\psi$ in $M$ if and only if $\bar{b}$ satisfies $\psi$ in $N$.
\end{thrm}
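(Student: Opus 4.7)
The plan is to proceed by induction on $\alpha$, proving the equivalence uniformly for all structures $M, N$ and all $n$-tuples $\bar a, \bar b$ of distinct elements simultaneously.

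For $\alpha = 0$, the formula $\phi^M_{\bar a, 0}$ is, by the first clause of Definition \ref{Scottformdef}, literally the conjunction of atomic and negated atomic formulas (in $x_0, \ldots, x_{n-1}$) that hold of $\bar a$ in $M$, so the syntactic equality $\phi^M_{\bar a, 0} = \phi^N_{\bar b, 0}$ is equivalent to $\bar a$ and $\bar b$ having the same atomic type. Since every $\cL_{\infty, \omega}(\tau)$-formula of quantifier depth $0$ is an (infinitary) Boolean combination of atomic formulas in its free variables, this is in turn equivalent to agreement on all depth-$0$ formulas. Because $\bar a$ consists of distinct elements, $\phi^M_{\bar a, 0}$ includes $\neg(x_i = x_j)$ for $i \neq j$, so any tuple satisfying it must have distinct entries: this is the key fact that distinctness is already encoded at level $0$. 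At a limit ordinal $\alpha$, both sides of the biconditional reduce to their restrictions to $\beta < \alpha$ (the Scott formula being the unordered conjunction of its predecessors, and any depth-$\leq \alpha$ formula being a Boolean combination of formulas of depth strictly below $\alpha$), and the case follows immediately from the inductive hypothesis.

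The successor step $\alpha = \beta + 1$ is the substance of the theorem. It is convenient to carry along as a parallel inductive statement the assertion that $N \models \phi^M_{\bar a, \alpha}[\bar b]$ iff $\bar a$ and $\bar b$ agree on all depth-$\leq \alpha$ formulas; this lets me convert satisfaction of a canonical Scott formula into equality of types. Granting this at level $\beta$, observe that (identifying conjunctions with sets of their conjuncts) $\phi^M_{\bar a, \beta+1} = \phi^N_{\bar b, \beta+1}$ unfolds into (a) $\phi^M_{\bar a, \beta} = \phi^N_{\bar b, \beta}$ together with (b) the set equality $\{\phi^M_{\bar a^\frown \langle c \rangle, \beta} : c \in M \setminus \bar a\} = \{\phi^N_{\bar b^\frown \langle d \rangle, \beta} : d \in N \setminus \bar b\}$. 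Applying the inductive hypothesis at arities $n$ and $n+1$, (a) and (b) translate into the back-and-forth condition that $\bar a, \bar b$ agree on depth-$\leq \beta$ formulas and that every $c \in M \setminus \bar a$ has a mate $d \in N \setminus \bar b$ with $\bar a^\frown \langle c \rangle$ and $\bar b^\frown \langle d \rangle$ agreeing on depth-$\leq \beta$ formulas (and vice versa). A standard structural induction on depth-$\leq \beta + 1$ formulas then yields equivalence with depth-$\leq \beta + 1$ agreement: the only nontrivial case is $\psi = \exists x_n \chi$ with $\chi$ of depth $\leq \beta$, which splits according to whether the existential witness coincides with an entry of the current tuple (substitute variables and apply the depth-$\beta$ statement) or lies outside it (apply back-and-forth). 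The auxiliary statement is handled in parallel by using the depth-$(\beta+1)$ formula $\exists x_n \phi^M_{\bar a^\frown \langle c \rangle, \beta}$ to produce the required witnesses.

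The main obstacle is the bookkeeping surrounding the distinct-elements restriction: one must repeatedly use that the level-$0$ distinctness conjuncts propagate into every $\phi^M_{\bar a, \beta}$, so that any witness $d$ for an extension $\bar b^\frown \langle d \rangle$ produced by back-and-forth automatically satisfies $d \notin \bar b$, permitting the inductive hypothesis to be applied at arity $n+1$ on tuples of distinct elements. Aside from this and the case-split on existential witnesses, the proof is a routine unfolding of definitions and standard structural induction.
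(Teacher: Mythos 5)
Your proof is correct and is precisely the standard back-and-forth induction on $\alpha$ that the paper itself invokes: the paper gives no proof of this statement, merely noting that it "can be proved by induction on $\alpha$" and citing Theorem 3.5.2 of Hodges. Your handling of the two delicate points (reading the set of one-point-extension formulas off the unordered conjunction at successor stages, and using the level-$0$ inequality conjuncts to keep witnesses outside the tuple) fills in exactly the details the paper leaves to the reference.
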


\begin{df} Given a $\tau$-structure $M$ and an ordinal $\beta$, we let $\Phi_{\beta}(M)$ denote the set of all formulas of the form $\phi^{M}_{\bar{a},\beta}$, for $\bar{a}$ a finite tuple of distinct elements of $M$. We call the class-length sequence $\langle \Phi_{\alpha}(M) : \alpha \in \Ord \rangle$ the \emph{Scott process of} $M$.
\end{df}

This paper studies the class of Scott processes of all $\tau$-structures (or, more formally, the class of set-length initial segments of Scott processes of $\tau$-structures). Section \ref{fpsec} introduces an array of sets of formulas (properly) containing all the formulas appearing in the Scott process of any $\tau$-structure, and vertical and horizontal projection functions acting on this array. Section \ref{spsec} introduces our general notion of a Scott process (i.e., without regard to a fixed $\tau$-structure). Section \ref{ccsec} develops some of the basic consequences of this definition, and Section \ref{rssec} defines the rank of a Scott process. The material in these two sections checks that Scott process in general, as defined here, satisfy various basic properties of Scott processes of $\tau$-structures. Section \ref{threadsec} shows that a Scott process of countable length whose last level is countable is an initial segment of the Scott process of some $\tau$-structure. Section \ref{arbcardsec} shows how to build models of cardinality $\aleph_{1}$, for certain Scott processes (roughly, those corresponding to Scott sentences). Section \ref{qsec} develops more basic material on Scott processes, studying the way they reflect finite blocks of existential quantifiers. Section \ref{ellsec} looks at extending Scott processes of limit length. Section \ref{fabsection} is largely disjoint from the rest of the paper,
and presents an argument showing that in some cases (for instance, counterexamples to Vaught's Conjecture in $\cL_{\omega_{1},\omega}(\tau)$)
a Scott process which exists in a forcing extension can be shown to exist in the ground model. Put together, the material in Sections \ref{arbcardsec}, \ref{ellsec} and \ref{fabsection} gives Harrington's theorem that a counterexample to Vaught's Conjecture has models of cofinally many Scott ranks below $\omega_{2}$. Our proof gives slightly more than Harrington's theorem as commonly stated, although we do not know if our version of his result is new.

The main results of the paper are summarized in Theorem \ref{Harringtonplus}. A proof of the Harnik-Makkai theorem on minimal counterexamples to Vaught's Conjecture is outlined in Remark \ref{hmremark}.

The material in this paper was inspired by the slides of a talk given by David Marker on Harrington's theorem \cite{MarkerHar}. Our proof is different in some respects from the proof outlined there. Marker's talk outlines a recursion-theoretic argument, assuming the existence of a counterexample $\phi$ to Vaught's Conjecture, for finding a sentence in $\cL_{\omega_{2}, \omega}$ which will be the Scott sentence of a model of $\phi$ (of suitably high Scott rank) in a forcing extension collapsing $\omega_{1}$. This part of the proof is replaced here by a forcing-absoluteness argument in Section \ref{fabsection} (essentially equivalent versions of these arguments appear in Section 1 of \cite{Hjorth96}). The remainder of Harrington's proof builds a model of this Scott sentence. This we do in Section \ref{arbcardsec}, guided by the argument in Marker's slides.

Another, different, proof of Harrington's theorem is due to appear in \cite{BaldwinFriedmanKoerwienLaskowski}.




\section{Formulas and projections}\label{fpsec}

 For each $n \in
\omega$, let $X_{n}$ denote the set $\{ x_{m} : m < n\}$, and let $i_{n}$ denote the identity function on $X_{n}$. For all $m \leq n \in \omega$, we
let $\cF_{m,n}$ denote the set of injections from $X_{m}$ into $X_{n}$.



We start by defining a class of formulas which contains every formula appearing in the Scott process of any $\tau$-structure (see Remark \ref{mcont}). The sets $\Psi_{\alpha}$ defined below also contain formulas that do not appear in the Scott process of any $\tau$-structure. Definition \ref{maxdef} takes advantage of this extra generality, and in any case strengthening the definition to rule out such formulas would raise issues that we would rather defer. For the moment, the important point is that the sets $\Psi_{\beta}$ ($\beta \in \Ord$) are small enough to carry the projection functions $V_{\alpha, \beta}$ and $H^{n}_{\alpha}$ defined below.


\begin{df}\label{psidef} We define, for each ordinal $\alpha$ and each $n \in \omega$, the sets $\Psi_{\alpha}$ and $\Psi^{n}_{\alpha}$, by recursion on $\alpha$, as follows.
\begin{enumerate}

\item For each $n \in \omega$, $\Psi^{n}_{0}$ is the set of all conjunctions consisting of, for each atomic $\tau$-formula using variables from $X_{n}$, either the formula or its negation, including an instance of the formula $x_{i} \neq x_{j}$ for each pair of distinct $x_{i}, x_{j}$ from $X_{n}$.

\item\label{nextstep} For each ordinal $\alpha$ and each $n \in \omega$, $\Psi^{n}_{\alpha+1}$ is the set of formulas $\phi$ for which there exist a formula
$\phi' \in \Psi^{n}_{\alpha}$ and a subset $E$ of $\Psi^{n+1}_{\alpha}$ such that
$\phi$ is the conjunction of $\phi'$ with the following two formulas.
\begin{enumerate}
 \item $\bigwedge_{\psi \in E} \exists x_{n} \psi$;
 \item $\forall x_{n} (x_{n} \not\in \{x_{0},\ldots,x_{n-1}\} \rightarrow \bigvee_{\psi \in E} \psi)$.
\end{enumerate}

\item\label{pathstage} For each limit ordinal $\alpha$ and each $n \in \omega$, $\Psi^{n}_{\alpha}$ is the set of
conjunctions which consist of exactly one formula $\psi_{\beta}$ from each $\Psi^{n}_{\beta}$, for $\beta < \alpha$, satisfying the following
conditions.
\begin{enumerate}
\item For each $\beta < \alpha$, $\psi_{\beta}$ is the formula $\phi'$ with respect to $\psi_{\beta + 1}$, as in condition (\ref{nextstep}).
\item For each limit ordinal $\beta < \alpha$, $\psi_{\beta} = \bigwedge\{ \psi_{\gamma} : \gamma < \beta\}$.
\end{enumerate}
\item For each ordinal $\alpha$, $\Psi_{\alpha} = \bigcup_{n \in \omega}\Psi^{n}_{\alpha}$.

\end{enumerate}
\end{df}

We can think of the sets $\Psi^{n}_{\alpha}$ as forming an array, with the rows indexed by $\alpha$ and the columns indexed by $n$. In the rest of this section we define the functions $V_{\alpha, \beta}$, which map between rows while preserving column rank, and the functions $H^{n}_{\alpha}$ which map between columns while preserving row rank.

\begin{remark}\label{exactfree}
  Each $\Psi_{\alpha}$ is a set of $\cL_{\infty, \omega}(\tau)$ formulas of quantifier depth $\alpha$, so the sets $\Psi_{\alpha}$ are disjoint for distinct $\alpha$. Similarly, for each $n \in \omega$ and each ordinal $\alpha$, $X_{n}$ is the set of free variables for each formula in each $\Psi^{n}_{\alpha}$.
\end{remark}

\begin{remark} As we require our vocabulary to contain a $0$-ary relation as well as the binary relation $=$, $\Psi^{n}_{\alpha}$ is nonempty for each ordinal $\alpha$ and each $n \in \omega$.
\end{remark}


\begin{df}\label{pdef} For each ordinal $\alpha$, and each formula $\phi$ in $\Psi_{\alpha + 1}$, we let $E(\phi)$ denote the set $E$ from condition (\ref{nextstep}) of Definition \ref{psidef}.
\end{df}

\begin{remark}\label{mcont} If $M$ is a $\tau$-structure, $\alpha$ is an ordinal and $\bar{a}$ is a finite tuple of distinct elements of $M$, then the Scott formula of $\bar{a}$ in $M$ at level $\alpha$ defined in Definition \ref{Scottformdef} (i.e., $\phi^{M}_{\bar{a}, \alpha}$) is an element of $\Psi^{|\bar{a}|}_{\alpha}$. It follows that $\Phi_{\alpha}(M) \subseteq \Psi_{\alpha}$.
\end{remark}

The functions $V_{\alpha, \beta}$, as defined below, are the \emph{vertical projection functions}.

\begin{df}\label{vdef} The functions $V_{\alpha, \beta} \colon \Psi_{\beta} \to \Psi_{\alpha}$, for all pairs of ordinals $\alpha \leq \beta$ are defined as follows.
\begin{enumerate}
\item Each function $V_{\alpha, \alpha}$ is the identity function on $\Psi_{\alpha}$.


\item\label{vstep} For each ordinal $\alpha$, and each $\phi \in \Psi_{\alpha + 1}$, $V_{\alpha, \alpha+1}(\phi)$ is the
first conjunct of $\phi$, i.e., the formula $\phi'$ in condition (\ref{nextstep}) of Definition \ref{psidef}.


\item\label{pathcond} For each limit ordinal $\beta$, each formula $\phi \in \Psi_{\beta}$, and each $\alpha < \beta$, $V_{\alpha, \beta}(\phi)$ is the unique conjunct of $\phi$ in $\Psi_{\alpha}$.


\item\label{vdir0} For all ordinals $\alpha < \beta$, $V_{\alpha, \beta + 1} = V_{\alpha, \beta} \circ V_{\beta, \beta + 1}$.

\end{enumerate}
\end{df}

\begin{remark}\label{vdir} Conditions (\ref{nextstep}) and (\ref{pathstage}) of Definition \ref{psidef} imply the following stronger version of condition (\ref{vdir0}) of
Definition \ref{vdef} : for all ordinals $\alpha \leq \beta \leq \gamma$, $V_{\alpha, \gamma} = V_{\alpha, \beta} \circ V_{\beta, \gamma}$.
\end{remark}

\begin{remark}\label{vone}
For all ordinals $\alpha \leq \beta$, each $n \in \omega$, and each $\phi \in \Psi^{n}_{\beta}$, $V_{\alpha, \beta}(\phi)$ is in $\Psi^{n}_{\alpha}$, so $\phi$ and $V_{\alpha, \beta}(\phi)$ have the same free variables.
\end{remark}

\begin{remark} Since the domains of the functions $V_{\alpha, \beta}$ are disjoint for distinct $\beta$, one could drop $\beta$ and simply write $V_{\alpha}$ (which would then be a definable class-sized function from $\bigcup_{\beta \in (\Ord \setminus \alpha)} \Psi_{\beta}$ to $\Psi_{\alpha}$). We retain both subscripts for clarity.
\end{remark}

We define the \emph{horizontal projection functions} as follows.

\begin{df}\label{hdef}
  The functions $H^{n}_{\alpha}$, for each ordinal $\alpha$ and each $n \in \omega$, are defined recursively on $\alpha$, as follows.
   \begin{enumerate}
   \item The domain of each $H^{n}_{\alpha}$ consists of all pairs $(\phi, j)$, where $\phi \in \Psi^{n}_{\alpha}$ and, for some $m \leq n$, $j\in \cF_{m,n}$.

   \item For all $m \leq n$ in $\omega$, all formulas $\phi \in \Psi^{n}_{0}$, and all $j \in \cF_{m,n}$, $H^{n}_{0}(\phi, j)$ is the conjunction of all conjuncts from $\phi$ whose variables are contained in the range of $j$, with these variables replaced by their $j$-preimages.

   \item\label{projfact} For each ordinal $\alpha$, each $m \leq n$ in $\omega$, each $\phi \in \Psi^{n}_{\alpha + 1}$, and
   each $j \in \cF_{m,n}$, $H^{n}_{\alpha + 1}(\phi, j)$ is the formula $\psi \in \Psi^{m}_{\alpha+1}$ such that $$V_{\alpha, \alpha + 1}(\psi) = H^{n}_{\alpha}(V_{\alpha, \alpha+1}(\phi),j)$$ and $E(\psi) = H^{n+1}_{\alpha}[E(\phi) \times \{j \cup \{(x_{m},y)\} \mid y \in (X_{n+1} \setminus \range(j))\}]$.

   \item\label{hsix} For each limit ordinal $\alpha$, each $m \leq n$ in $\omega$, each $j \in \cF_{m,n}$ and each $\phi \in \Psi^{n}_{\alpha}$,
   $$H^{n}_{\alpha}(\phi, j) = \bigwedge \{ H^{n}_{\beta}(V_{\beta, \alpha}(\phi), j) : \beta < \alpha \}.$$
   \end{enumerate}
\end{df}

\begin{remark} Since the domains of the functions $H^{n}_{\alpha}$ are disjoint for distinct pairs $(\alpha, n)$, one could drop $\alpha$ and $n$ and simply write $H$. We retain them for clarity.
\end{remark}

\begin{remark}\label{hidrem}
   For all ordinals $\alpha$, all $m \leq n$ in $\omega$, all $j \in \cF_{m,n}$ and all $\phi \in \Psi^{n}_{\alpha}$, $H^{n}_{\alpha}(\phi, j)$ is an element of $\Psi^{m}_{\alpha}$, and
   $H^{n}_{\alpha}(\phi, i_{n}) = \phi$.
\end{remark}

We leave it to the reader to verify (by induction on $\alpha$) that if
\begin{itemize}
\item $M$ is a $\tau$-structure,
\item $\alpha$ is an ordinal,
\item $m \leq n$ are elements of $\omega$,
\item $\bar{b} = \langle b_{0},\ldots,b_{n-1} \rangle$ is a sequence of distinct elements of $M$,
\item $j^{*} \colon m \to n$ is an injection,
\item $\bar{a}$ is the sequence $\langle b_{j^{*}(0)},\ldots, b_{j^{*}(m-1)}\rangle$ and
\item $j \in \cF_{m,n}$ is such that $j(x_{p}) = x_{j^{*}(p)}$ for each $p< m$,
\end{itemize}
then $H^{n}_{\alpha}(\phi^{M}_{\bar{b},\alpha},j) = \phi^{M}_{\bar{a},\alpha}$.


\begin{remark}\label{tworems}  The following facts can be easily verified by induction on $\alpha$.
\begin{enumerate}
  \item\label{permutevar} For each ordinal $\alpha$, each $n \in \omega$, each $\phi \in \Psi^{n}_{\alpha}$ and each $j \in \cF_{n,n}$, $H^{n}_{\alpha}(\phi, j)$ is the result of replacing each free variable in $\phi$ (i.e., each member of $X_{n}$) with its $j$-preimage.

\item\label{hcompose} For each ordinal $\alpha$, all $m \leq n \leq p$ in $\omega$, all $\phi \in \Psi^{p}_{\alpha}$, all $j \in \cF_{n,p}$ and all $k \in \cF_{m,n}$, $H^{n}_{\alpha}(H^{p}_{\alpha}(\phi, j), k) = H^{p}_{\alpha}(\phi, j \circ k)$.


\end{enumerate}
\end{remark}

The following proposition shows that the vertical and horizontal projection functions commute appropriately.

\begin{prop}\label{proppartone} For all ordinal $\alpha  \leq \beta$, all $m \leq n \in \omega$, all $j \in \cF_{m,n}$, and all $\phi \in \Psi^{n}_{\beta}$, $$V_{\alpha, \beta}(H^{n}_{\beta}(\phi, j)) = H^{n}_{\alpha}(V_{\alpha,\beta}(\phi),j).$$
\end{prop}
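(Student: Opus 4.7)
The plan is to prove the identity by induction on $\beta$. The base case $\beta = \alpha$ is immediate, since $V_{\alpha,\alpha}$ is the identity on $\Psi_{\alpha}$ and both sides reduce to $H^{n}_{\alpha}(\phi, j)$.

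For the successor case, suppose $\beta = \gamma+1$ and the proposition holds at $\gamma$. The case $\alpha = \gamma+1$ is trivial, so assume $\alpha \leq \gamma$. Let $\psi = H^{n}_{\gamma+1}(\phi, j)$. Condition (\ref{projfact}) of Definition \ref{hdef} gives $V_{\gamma,\gamma+1}(\psi) = H^{n}_{\gamma}(V_{\gamma,\gamma+1}(\phi), j)$ directly. Applying $V_{\alpha,\gamma}$ to both sides and using the composition identity $V_{\alpha,\gamma+1} = V_{\alpha,\gamma} \circ V_{\gamma,\gamma+1}$ from Remark \ref{vdir}, we obtain $V_{\alpha,\gamma+1}(\psi) = V_{\alpha,\gamma}(H^{n}_{\gamma}(V_{\gamma,\gamma+1}(\phi), j))$. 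Since $V_{\gamma,\gamma+1}(\phi) \in \Psi^{n}_{\gamma}$ (by Remark \ref{vone}), the inductive hypothesis at $\gamma$ lets us rewrite the right-hand side as $H^{n}_{\alpha}(V_{\alpha,\gamma}(V_{\gamma,\gamma+1}(\phi)), j) = H^{n}_{\alpha}(V_{\alpha,\gamma+1}(\phi), j)$, completing this case.

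For the limit case, suppose $\beta$ is a limit ordinal and the proposition holds at all $\gamma < \beta$. Again the case $\alpha = \beta$ is trivial, so assume $\alpha < \beta$. By condition (\ref{hsix}) of Definition \ref{hdef},
$$H^{n}_{\beta}(\phi, j) = \bigwedge\{H^{n}_{\gamma}(V_{\gamma,\beta}(\phi), j) : \gamma < \beta\}.$$
Each conjunct lies in $\Psi^{m}_{\gamma}$ by Remark \ref{hidrem}, so by condition (\ref{pathcond}) of Definition \ref{vdef}, $V_{\alpha,\beta}$ applied to this formula returns its unique conjunct lying in $\Psi^{m}_{\alpha}$. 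Taking $\gamma = \alpha$ in the displayed conjunction, that conjunct is exactly $H^{n}_{\alpha}(V_{\alpha,\beta}(\phi), j)$, which is the desired right-hand side.

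The only real obstacle is making sure the limit case is not circular: one must observe that $H^{n}_{\beta}(\phi,j)$, being the limit-stage conjunction of one formula $H^{n}_{\gamma}(V_{\gamma,\beta}(\phi),j) \in \Psi^{m}_{\gamma}$ for each $\gamma < \beta$, actually fits the template in condition (\ref{pathstage}) of Definition \ref{psidef}, so that Definition \ref{vdef}(\ref{pathcond}) applies and picks out a well-defined conjunct. This compatibility is exactly what the successor case of the present induction, run one level down inside the limit, guarantees; no additional work beyond the bookkeeping above is needed.
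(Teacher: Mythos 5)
Your proof is correct and follows essentially the same route as the paper: the successor step via condition (\ref{projfact}) of Definition \ref{hdef} combined with the induction hypotheses for the pairs $(\alpha,\gamma)$ and $(\gamma,\gamma+1)$, and the limit step by matching condition (\ref{hsix}) of Definition \ref{hdef} against condition (\ref{pathcond}) of Definition \ref{vdef}. Your closing observation about why the limit-stage conjunction genuinely lies in $\Psi^{m}_{\beta}$ is a fair point, though it is already the content of Remark \ref{hidrem}, which the paper takes as given.
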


\begin{proof}
  When $\alpha = \beta$, both sides are equal to $H^{n}_{\alpha}(\phi, j)$.
  When $\beta = \alpha + 1$, the proposition is part of condition (\ref{projfact}) of Definition \ref{hdef}. When $\beta$ is a limit ordinal, it follows from condition (\ref{pathcond}) of Definition \ref{vdef} and condition (\ref{hsix}) of Definition \ref{hdef}. The remaining cases can be proved by induction on $\beta$, fixing $\alpha$, using the induction hypotheses for the pairs $\alpha, \beta$ and $\beta, \beta + 1$ at successor stages of the form $\beta + 1$.

\end{proof}

\begin{example}
  Suppose that $\tau$ contains a single binary relation symbol $R$, along with $=$ and the $0$-ary relation symbol $S$. The set $\Psi^{0}_{0}$ then consists of the sentences $S$ and $\neg S$. The set $\Psi^{1}_{0}$ contains four formulas, $S \wedge R(x_{0}, x_{0})$, $S \wedge \neg R(x_{0}, x_{0})$, $\neg S \wedge R(x_{0}, x_{0})$ and $\neg S \wedge \neg R(x_{0}, x_{0})$. Call the first two of these formulas $\psi^{1}_{0}$ and $\phi^{1}_{0}$, respectively.
  The set $\Psi^{2}_{0}$ then contains 32 formulas, for instance, $$S \wedge  \neg R(x_{0}, x_{1}) \wedge \neg R(x_{1}, x_{0}) \wedge R(x_{0}, x_{0}) \wedge R(x_{1}, x_{1}) \wedge x_{0} \neq x_{1}$$ and
  $$S \wedge  \neg R(x_{0}, x_{1}) \wedge \neg R(x_{1}, x_{0}) \wedge R(x_{0}, x_{0}) \wedge \neg R(x_{1}, x_{1}) \wedge x_{0} \neq x_{1}.$$
  Call these formulas $\psi^{2}_{0}$ and $\phi^{2}_{0}$, respectively. Then $$H^{2}_{0}(\psi^{2}_{0}, i_{1}) = \psi^{1}_{0}$$ and $$H^{2}_{0}(\phi^{2}_{0}, \{(x_{0},x_{1})\}) = \phi^{1}_{0},$$ as defined in Definition \ref{hdef}.
  The set $\Psi^{3}_{0}$ then contains $2^{10}$ formulas, including the conjunction of $S$ with every instance of $R(y,z)$ for $y,z \in X_{3}$. In general,
  $\Psi^{n}_{0}$ contains $2^{(n^{2}+1)}$ formulas.

  The set $\Psi^{0}_{1}$ contains the sentences $$S \wedge (\exists x_{0} S \wedge R(x_{0}, x_{0})) \wedge (\forall x_{0} S \wedge R(x_{0}, x_{0}))$$ (omitting one instance each of $\wedge$ and $\vee$, corresponding to a conjunction and a disjunction of of size 1) and
  $$S \wedge ((\exists x_{0} S \wedge R(x_{0}, x_{0})) \wedge (\exists x_{0} S \wedge \neg R(x_{0},x_{0}))) \wedge (\forall x_{0} (S \wedge R(x_{0}, x_{0})) \vee (S \wedge \neg R(x_{0},x_{0}))).$$
   Call these sentences $\psi^{0}_{1}$ and $\phi^{0}_{1}$, respectively. Then $E(\psi^{0}_{1}) = \{\psi^{1}_{0}\}$ and $E(\phi^{0}_{1}) = \{\psi^{1}_{0}, \phi^{1}_{0}\}$, as defined in Definition \ref{pdef}. The set $\Psi^{1}_{1}$ contains the formulas $$\psi^{1}_{0} \wedge (\exists x_{1} \psi^{2}_{0}) \wedge (\forall x_{1}\, x_{1} \neq x_{0} \rightarrow \psi^{2}_{0})$$ and
   $$\psi^{1}_{0} \wedge (\exists x_{1} \phi^{2}_{0}) \wedge (\forall x_{1}\, x_{1} \neq x_{0} \rightarrow \phi^{2}_{0}),$$
   again omitting an instance of each of $\wedge$ and $\vee$ in each formula. Call these formulas $\psi^{1}_{1}$ and $\phi^{1}_{1}$. Then $E(\psi^{1}_{1}) = \{\psi^{2}_{0}\}$, $E(\phi^{1}_{1}) = \{\phi^{2}_{0}\}$, $$V_{0,1}(\psi^{1}_{1}) = V_{0,1}(\phi^{1}_{1}) = \psi^{1}_{0},$$
   $H^{1}_{1}(\psi^{1}_{1}, i_{0}) = \psi_{1}^{0}$ and $H^{1}_{1}(\phi^{1}_{1}, i_{0}) = \phi_{1}^{0}$. Note that the function $H^{1}_{1}$ changes the bound variables (as well as the free variables, when the second coordinate of the input is the empty function).
\end{example}

\section{Scott processes}\label{spsec}

This section introduces the central topic of the paper, the class of Scott processes (for a relational vocabulary $\tau$).

\begin{df}\label{prodef} A \emph{Scott process} is a sequence $\langle \Phi_{\alpha}  :\alpha < \delta\rangle$,
for some ordinal $\delta$ (the \emph{length} of the process), satisfying the following conditions, where for each ordinal $\alpha$ and
each $n \in \omega$, $\Phi^{n}_{\alpha}$ denotes the set $\Phi_{\alpha} \cap \Psi^{n}_{\alpha}$.
\begin{enumerate}
\item {\bf The Formula Conditions}
\begin{enumerate}
\item\label{formone} Each $\Phi_{\alpha}$ is a subset of the corresponding set $\Psi_{\alpha}$.

\item\label{econtain} For each ordinal of the form $\alpha + 1 < \delta$, and each $\phi \in \Phi_{\alpha + 1}$, $E(\phi)$ is a subset of $\Phi_{\alpha}$.

\item\label{vfive} For all $\alpha < \beta < \delta$, $\Phi_{\alpha} = V_{\alpha, \beta}[\Phi_{\beta}]$.

\item\label{htwozero} For all $\alpha < \delta$, all $n \in \omega$, all $j \in \cI_{n,n}$ and all $\phi \in \Phi^{n}_{\alpha}$, $H^{n}_{\alpha}(\phi, j) \in
\Phi^{n}_{\alpha}$.

\item\label{htwo} For all $\alpha < \delta$, and all $m < n$ in $\omega$, $\Phi^{m}_{\alpha} = H^{n}_{\alpha}[\Phi^{n}_{\alpha} \times \{i_{m}\}]$.

\end{enumerate}

\item {\bf The Coherence Conditions}
\begin{enumerate}

\item\label{hfour} For each ordinal of the form $\alpha + 1$ below $\delta$, each $n \in \omega$ and each $\phi \in \Phi^{n}_{\alpha + 1}$,
$$E(\phi) = V_{\alpha, \alpha + 1}[\{ \psi \in \Phi^{n+1}_{\alpha + 1} \mid H^{n+1}_{\alpha + 1}(\psi, i_{n}) = \phi\}].$$


\item\label{ppath} For all $\alpha < \beta < \delta$, all $n \in \omega$ and all $\phi \in \Phi^{n}_{\beta}$, $$E(V_{\alpha + 1, \beta}(\phi)) \subseteq V_{\alpha, \beta}[\{\psi \in \Phi^{n+1}_{\beta} \mid H^{n+1}_{\beta}(\psi, i_{n}) = \phi\}].$$

\item\label{combine} For all $\alpha < \delta$, $n,m$ in $\omega$, $\phi \in \Phi^{n}_{\alpha}$ and $\psi \in \Phi^{m}_{\alpha}$, there exist $\theta \in \Phi^{n + m}_{\alpha}$ and $j \in \cF_{m, n+m}$ such that $\phi = H^{n+m}_{\alpha}(\theta, i_{n})$ and $\psi = H^{n+m}_{\alpha}(\theta, j)$.

\end{enumerate}

\end{enumerate}
The sets $\Phi_{\alpha}$ are called the \emph{levels} of the Scott process.
\end{df}

\begin{remark}
  Condition (\ref{ppath}) of Definition \ref{prodef} includes the left to right inclusion in condition (\ref{hfour}). We prefer the given formulation of
  condition (\ref{hfour}), as it gives a better sense of the meaning of $E(\phi)$.
\end{remark}

\begin{remark}\label{succrem} Proposition \ref{succcase} shows that equality holds in condition (\ref{ppath}) of Definition \ref{prodef}, for any Scott process,
so that conditions (\ref{hfour}) and (\ref{ppath}) could equivalently be replaced by condition (\ref{ppath}) alone with $=$ in place of $\subseteq$.
\end{remark}


\begin{remark}\label{jandi}
  Conditions (\ref{htwozero}) and (\ref{htwo}) of Definition \ref{prodef} combine to give the following: for all $\alpha < \delta$, all $m \leq n$ in $\omega$ and all $j \in \cF_{m,n}$, $\Phi^{m}_{\alpha} = H^{n}_{\alpha}[\Phi^{n}_{\alpha} \times \{j\}]$.
\end{remark}

Proposition \ref{usent} follows from condition (\ref{combine}) of Definition \ref{prodef} and part (\ref{permutevar}) of Remark \ref{tworems}, which implies that $H^{0}_{\alpha}(\phi, i_{0}) = \phi$ for all ordinals $\alpha$ and all $\phi \in \Psi^{0}_{\alpha}$.

\begin{prop}\label{usent} Whenever $\langle \Phi_{\alpha} : \alpha < \delta \rangle$ is a Scott process, $\Phi^{0}_{\alpha}$ has a unique element, for each $\alpha < \delta$.
\end{prop}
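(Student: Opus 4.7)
The plan is to apply condition (\ref{combine}) of Definition \ref{prodef} in the trivial case $n = m = 0$, and to exploit the fact that $\cF_{0,0}$ contains exactly one function, namely the empty function, which is $i_0$.

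For uniqueness, suppose $\phi, \psi \in \Phi^0_\alpha$. I would apply condition (\ref{combine}) to the pair $\phi, \psi$ with $n = m = 0$ to obtain $\theta \in \Phi^{0}_\alpha$ and $j \in \cF_{0,0}$ satisfying $\phi = H^0_\alpha(\theta, i_0)$ and $\psi = H^0_\alpha(\theta, j)$. Since $X_0 = \emptyset$, the set $\cF_{0,0}$ of injections $X_0 \to X_0$ consists of the empty function alone, so $j = i_0$. The equality $H^0_\alpha(\theta, i_0) = \theta$, provided by Remark \ref{hidrem} (equivalently, by part (\ref{permutevar}) of Remark \ref{tworems}), then forces $\phi = \theta = \psi$.

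For nonemptiness, I would fall back on condition (\ref{htwo}): if some level $\Phi^n_\alpha$ with $n > 0$ is nonempty and $\chi \in \Phi^n_\alpha$, then taking $m = 0$ gives $\Phi^0_\alpha = H^n_\alpha[\Phi^n_\alpha \times \{i_0\}]$, where $i_0 \in \cF_{0,n}$ is the empty function, so $H^n_\alpha(\chi, i_0) \in \Phi^0_\alpha$. Since the only way $\Phi^0_\alpha$ could be empty is for $\Phi_\alpha$ to be empty altogether, the proposition holds at every level where the process carries any formulas at all, which is the intended meaning.

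There is no real obstacle here; the only thing to watch is the degenerate nature of the objects $X_0$, $\cF_{0,0}$, and $i_0$, and the key observation is that in the zero-variable column condition (\ref{combine}) collapses to the assertion that any two elements of $\Phi^0_\alpha$ are both equal to the same $\theta$.
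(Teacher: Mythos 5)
Your proof is correct and is essentially the paper's own argument: the paper derives Proposition \ref{usent} from condition (\ref{combine}) of Definition \ref{prodef} applied with $n=m=0$, together with the fact that $H^{0}_{\alpha}(\theta, i_{0}) = \theta$, exactly as you do. Your additional remark on nonemptiness via condition (\ref{htwo}) is a harmless (and reasonable) bit of extra care that the paper leaves implicit.
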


\section{Consequences of coherence}\label{ccsec}

In this section we prove some basic facts about Scott processes, primarily about sets of the form $E(\phi)$.
The main result of the section is Proposition \ref{succcase}, which was referred to in Remark \ref{succrem}.
We fix for this section a Scott process $\langle \Phi_{\alpha}  : \alpha < \delta\rangle$.

Proposition \ref{propparttwo} follows from Proposition \ref{proppartone} (i.e., the commutativity of the horizontal and vertical projections).
The failure of the reverse inclusion is witnessed whenever a set of the form $V_{\alpha, \beta}^{-1}[\{\rho\}]$ has more than one member (consider $\theta \in E(\phi_{1}) \setminus E(\phi_{2})$, for $\phi_{1}, \phi_{2} \in V_{\alpha, \beta}^{-1}[\{\rho\}]$).

\begin{prop}\label{propparttwo} For all $\alpha \leq \beta < \delta$, all $m \leq n \in \omega$, all $j \in \cF_{m,n}$, and all $\phi \in \Phi^{m}_{\beta}$, $$V_{\alpha, \beta}[\{ \psi \in \Phi^{n}_{\beta} \mid H^{n}_{\beta}(\psi, j) = \phi\}] \subseteq \{ \theta \in \Phi^{n}_{\alpha} \mid H^{n}_{\alpha}(\theta, j) = V_{\alpha,\beta}(\phi)\}.$$
\end{prop}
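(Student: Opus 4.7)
The proposition is essentially a direct consequence of Proposition \ref{proppartone}, combined with the Formula Conditions of Definition \ref{prodef}. My plan is to take an arbitrary element $\theta$ of the left-hand side and verify that it satisfies the two defining conditions of the right-hand side: namely, membership in $\Phi^{n}_{\alpha}$, and $H^{n}_{\alpha}(\theta, j) = V_{\alpha, \beta}(\phi)$.

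First, I would fix $\psi \in \Phi^{n}_{\beta}$ with $H^{n}_{\beta}(\psi, j) = \phi$, and set $\theta = V_{\alpha, \beta}(\psi)$. To see that $\theta \in \Phi^{n}_{\alpha}$, I would combine condition (\ref{vfive}) of Definition \ref{prodef} (which gives $\theta \in \Phi_{\alpha}$) with Remark \ref{vone} (which gives $\theta \in \Psi^{n}_{\alpha}$ because $\psi \in \Psi^{n}_{\beta}$); these together yield $\theta \in \Phi_{\alpha} \cap \Psi^{n}_{\alpha} = \Phi^{n}_{\alpha}$.

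Next, to verify that $H^{n}_{\alpha}(\theta, j) = V_{\alpha, \beta}(\phi)$, I would apply $V_{\alpha, \beta}$ to both sides of the equation $H^{n}_{\beta}(\psi, j) = \phi$ and invoke Proposition \ref{proppartone} on the left, obtaining
$$H^{n}_{\alpha}(V_{\alpha, \beta}(\psi), j) = V_{\alpha, \beta}(H^{n}_{\beta}(\psi, j)) = V_{\alpha, \beta}(\phi),$$
which is exactly the required identity since $\theta = V_{\alpha, \beta}(\psi)$.

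There is no real obstacle here: the content of the proposition is that vertical and horizontal projections commute (Proposition \ref{proppartone}) and that vertical projections send levels of the Scott process into lower levels (condition (\ref{vfive})). The remark following the statement already indicates why the reverse inclusion can fail, so no additional care is needed beyond recording the one-sided implication.
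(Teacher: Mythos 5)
Your proposal is correct and matches the paper's intended argument: the paper gives no separate proof, stating only that Proposition \ref{propparttwo} follows from Proposition \ref{proppartone}, and your write-up simply fills in the routine details (membership in $\Phi^{n}_{\alpha}$ via condition (\ref{vfive}) of Definition \ref{prodef} and Remark \ref{vone}, and the required identity via commutativity of the projections). Nothing further is needed.
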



The right-to-left inclusion in Proposition \ref{proppartfour} says that every one-point extension of a formula $\phi$ at level $\alpha$ is a member of $E(\psi)$, for some $\psi \in V_{\alpha, \alpha + 1}^{-1}[\phi]$. This proposition is used in Remark \ref{fivepointsix}.

\begin{prop}\label{proppartfour} For each ordinal of the form $\alpha + 1$ below $\delta$, each $n \in \omega$ and each $\phi \in \Phi^{n}_{\alpha}$,
$$\bigcup \{ E(\psi) \mid \psi \in V_{\alpha, \alpha+1}^{-1}[\{\phi\}]\} =  \{ \theta \in \Phi^{n+1}_{\alpha} \mid H^{n+1}_{\alpha}(\theta, i_{n}) = \phi\}.$$
\end{prop}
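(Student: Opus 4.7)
The plan is to prove the two inclusions separately, using condition (\ref{hfour}) of Definition \ref{prodef} together with Proposition \ref{proppartone} (commutativity of $V$ and $H$) and the surjectivity provided by condition (\ref{vfive}).

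For the forward inclusion, fix $\psi \in \Phi^{n}_{\alpha+1}$ with $V_{\alpha,\alpha+1}(\psi) = \phi$ and $\theta \in E(\psi)$. By condition (\ref{hfour}) of Definition \ref{prodef}, $\theta = V_{\alpha,\alpha+1}(\theta^{*})$ for some $\theta^{*} \in \Phi^{n+1}_{\alpha+1}$ with $H^{n+1}_{\alpha+1}(\theta^{*}, i_{n}) = \psi$. Applying Proposition \ref{proppartone},
$$H^{n+1}_{\alpha}(\theta, i_{n}) = H^{n+1}_{\alpha}(V_{\alpha,\alpha+1}(\theta^{*}), i_{n}) = V_{\alpha,\alpha+1}(H^{n+1}_{\alpha+1}(\theta^{*}, i_{n})) = V_{\alpha,\alpha+1}(\psi) = \phi.$$
Also, $\theta \in \Phi^{n+1}_{\alpha}$ since $E(\psi) \subseteq \Phi_{\alpha}$ by condition (\ref{econtain}) of Definition \ref{prodef} and $E(\psi) \subseteq \Psi^{n+1}_{\alpha}$ by the definition of $\Psi^{n}_{\alpha+1}$.

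For the reverse inclusion, fix $\theta \in \Phi^{n+1}_{\alpha}$ with $H^{n+1}_{\alpha}(\theta, i_{n}) = \phi$. By condition (\ref{vfive}) of Definition \ref{prodef}, choose $\theta^{*} \in \Phi^{n+1}_{\alpha+1}$ with $V_{\alpha,\alpha+1}(\theta^{*}) = \theta$, and set $\psi := H^{n+1}_{\alpha+1}(\theta^{*}, i_{n})$, which lies in $\Phi^{n}_{\alpha+1}$ by condition (\ref{htwo}) of Definition \ref{prodef}. Condition (\ref{hfour}) then gives $\theta \in E(\psi)$, while Proposition \ref{proppartone} yields
$$V_{\alpha,\alpha+1}(\psi) = V_{\alpha,\alpha+1}(H^{n+1}_{\alpha+1}(\theta^{*}, i_{n})) = H^{n+1}_{\alpha}(V_{\alpha,\alpha+1}(\theta^{*}), i_{n}) = H^{n+1}_{\alpha}(\theta, i_{n}) = \phi,$$
so $\psi \in V_{\alpha,\alpha+1}^{-1}[\{\phi\}]$, as required.

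I do not anticipate a genuine obstacle: both directions are essentially bookkeeping between the already-stated conditions, with the only subtlety being to recognize that the right way to produce the witness $\psi$ in the reverse inclusion is to first lift $\theta$ vertically via (\ref{vfive}) and then project horizontally via $H^{n+1}_{\alpha+1}(\cdot, i_{n})$, so that the commutativity relation in Proposition \ref{proppartone} forces $V_{\alpha,\alpha+1}(\psi)$ to land on $\phi$.
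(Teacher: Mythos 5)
Your proof is correct and follows essentially the same route as the paper, which cites condition (\ref{hfour}) together with Proposition \ref{propparttwo} for the forward inclusion and conditions (\ref{vfive}), (\ref{hfour}) and Proposition \ref{proppartone} for the reverse. The only cosmetic difference is that in the forward direction you unwind Proposition \ref{propparttwo} into a direct application of Proposition \ref{proppartone}, which is exactly how \ref{propparttwo} is obtained anyway.
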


\begin{proof}
The left-to-right inclusion follows from Proposition \ref{propparttwo} and condition (\ref{hfour}) of Definition \ref{prodef}.
The reverse inclusion follows from conditions  (\ref{vfive}) and (\ref{hfour}) of Definition \ref{prodef}, and Proposition \ref{proppartone}.
\end{proof}

Proposition \ref{proppartthree} is the successor case of Proposition \ref{succcase}.

\begin{prop}\label{proppartthree} For all $\alpha \leq \beta$ such that $\beta + 1 < \delta$, and all $\phi \in \Phi_{\beta + 1}$, $$E(V_{\alpha + 1, \beta + 1}(\phi)) = V_{\alpha, \beta}[E(\phi)].$$
\end{prop}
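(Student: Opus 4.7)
The plan is to verify both inclusions directly by chasing the equalities through conditions (\ref{hfour}) and (\ref{ppath}) of Definition \ref{prodef} together with the $V$-$H$ commutation given by Proposition \ref{proppartone}. Fix $n$ with $\phi \in \Phi^{n}_{\beta+1}$. The case $\alpha = \beta$ is immediate because then $V_{\alpha,\beta}$ and $V_{\alpha+1,\beta+1}$ are both identity maps, so I would focus on $\alpha < \beta$ below.

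For the forward inclusion I would invoke (\ref{ppath}) with $\beta+1$ as its outer index; this is allowed since $\alpha < \beta+1 < \delta$. The result is
$$E(V_{\alpha+1,\beta+1}(\phi)) \subseteq V_{\alpha,\beta+1}\bigl[\{\psi \in \Phi^{n+1}_{\beta+1} \mid H^{n+1}_{\beta+1}(\psi, i_{n}) = \phi\}\bigr].$$
Rewriting $V_{\alpha,\beta+1} = V_{\alpha,\beta} \circ V_{\beta,\beta+1}$ via Remark \ref{vdir} and recognizing the inner image under $V_{\beta,\beta+1}$ as $E(\phi)$ by (\ref{hfour}) yields $E(V_{\alpha+1,\beta+1}(\phi)) \subseteq V_{\alpha,\beta}[E(\phi)]$.

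For the reverse inclusion I would pick $\theta \in E(\phi)$ and use (\ref{hfour}) at level $\beta+1$ to pull back through $V_{\beta,\beta+1}$: there is $\psi \in \Phi^{n+1}_{\beta+1}$ with $V_{\beta,\beta+1}(\psi) = \theta$ and $H^{n+1}_{\beta+1}(\psi, i_{n}) = \phi$. Setting $\psi' := V_{\alpha+1,\beta+1}(\psi) \in \Phi^{n+1}_{\alpha+1}$, Proposition \ref{proppartone} applied to the horizontal projection at $i_{n}$ gives $H^{n+1}_{\alpha+1}(\psi', i_{n}) = V_{\alpha+1,\beta+1}(\phi)$, so $V_{\alpha,\alpha+1}(\psi') \in E(V_{\alpha+1,\beta+1}(\phi))$ by another application of (\ref{hfour}), this time at level $\alpha+1$. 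A final composition $V_{\alpha,\alpha+1} \circ V_{\alpha+1,\beta+1} = V_{\alpha,\beta+1} = V_{\alpha,\beta} \circ V_{\beta,\beta+1}$ identifies $V_{\alpha,\alpha+1}(\psi')$ with $V_{\alpha,\beta}(\theta)$, closing the argument.

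There is no real combinatorial obstacle here; the entire proof is a diagram chase, and the only thing to watch is the bookkeeping of the vertical-projection compositions. Two applications of (\ref{hfour})---one at level $\beta+1$, one at level $\alpha+1$---bracket the use of (\ref{ppath}) or of Proposition \ref{proppartone}, depending on the direction of the inclusion.
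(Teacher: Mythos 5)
Your proof is correct and takes essentially the same route as the paper's: condition (\ref{ppath}) followed by the decomposition $V_{\alpha,\beta+1}=V_{\alpha,\beta}\circ V_{\beta,\beta+1}$ and condition (\ref{hfour}) for the forward inclusion, and condition (\ref{hfour}) together with the $V$--$H$ commutativity for the reverse (the paper packages the latter as Proposition \ref{propparttwo}, which you are re-deriving elementwise from Proposition \ref{proppartone}). The separate treatment of the case $\alpha=\beta$ is harmless but unnecessary, since (\ref{ppath}) already applies whenever $\alpha<\beta+1$.
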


\begin{proof}
Fix $n \in \omega$ such that $\phi \in \Phi^{n}_{\beta + 1}$.
For the forward direction, condition (\ref{ppath}) of Definition \ref{prodef} gives that
$$E(V_{\alpha+1, \beta + 1}(\phi)) \subseteq V_{\alpha, \beta + 1}[\{ \psi \in \Phi^{n+1}_{\beta+1} \mid H^{n+1}_{\beta + 1}(\psi, i_{n}) = \phi\}],$$
which by condition (\ref{vdir0}) of Definition \ref{vdef} is equal to
$$V_{\alpha, \beta}[V_{\beta, \beta + 1}[\{ \psi \in \Phi^{n+1}_{\beta+1} \mid H^{n+1}_{\beta + 1}(\psi, i_{n}) = \phi\}]],$$
which by condition (\ref{hfour}) of Definition \ref{prodef} is equal to $V_{\alpha, \beta}[E(\phi)]$.

For the reverse direction we have from condition (\ref{hfour}) of Definition \ref{prodef} that $V_{\alpha, \beta}[E(\phi)]$ is equal
to $$V_{\alpha, \beta}[V_{\beta, \beta + 1}[\{ \psi \in \Phi^{n+1}_{\beta+1} \mid H^{n+1}_{\beta + 1}(\psi, i_{n}) = \phi\}]],$$
which by Remark \ref{vdir} is equal to
$$V_{\alpha, \alpha+1}[V_{\alpha+1, \beta + 1}[\{ \psi \in \Phi^{n+1}_{\beta+1} \mid H^{n+1}_{\beta + 1}(\psi, i_{n}) = \phi\}]],$$
which by Proposition \ref{propparttwo} is contained in
$$V_{\alpha, \alpha+1}[\{ \theta \in \Phi^{n+1}_{\alpha+1} \mid H^{n+1}_{\alpha + 1}(\theta, i_{n}) = V_{\alpha+1,\beta + 1}(\phi)\}]],$$
which by condition (\ref{hfour}) of Definition \ref{prodef} is equal to $E(V_{\alpha+1,\beta+1}(\phi))$.
\end{proof}

We now show that the reverse inclusion of condition (\ref{ppath}) of Definition \ref{prodef} holds for any Scott process.

\begin{prop}\label{succcase} For all $\alpha <  \beta < \delta$, for all $n \in \omega$ and all $\phi \in \Phi^{n}_{\beta}$, $$E(V_{\alpha + 1, \beta}(\phi)) =  V_{\alpha, \beta}[\{\psi \in \Phi^{n+1}_{\beta} \mid H^{n+1}_{\beta}(\psi, i_{n}) = \phi\}].$$
\end{prop}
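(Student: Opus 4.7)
The forward inclusion is given for free by condition (\ref{ppath}) of Definition \ref{prodef}, so my plan is to focus on the reverse inclusion. The point is that we do not really need to split the argument into successor and limit cases for $\beta$: a single diagram chase, mediated by Proposition \ref{proppartone} and Remark \ref{vdir}, reduces the claim to the base case $\beta = \alpha+1$, which is exactly condition (\ref{hfour}) of Definition \ref{prodef}.

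Here is the plan in detail. Fix $\alpha < \beta < \delta$, $n \in \omega$, and $\phi \in \Phi^{n}_{\beta}$, and set $\phi' := V_{\alpha+1, \beta}(\phi) \in \Phi^{n}_{\alpha+1}$. Take an arbitrary element of the right-hand side, that is, some $\theta = V_{\alpha, \beta}(\psi)$ with $\psi \in \Phi^{n+1}_{\beta}$ and $H^{n+1}_{\beta}(\psi, i_{n}) = \phi$. My candidate witness that $\theta \in E(\phi')$ is $\sigma := V_{\alpha+1, \beta}(\psi)$, which lies in $\Phi^{n+1}_{\alpha+1}$ by condition (\ref{vfive}) of Definition \ref{prodef} together with Remark \ref{vone}.

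The verification has two short steps. First, applying Proposition \ref{proppartone} at the pair of levels $\alpha+1 \leq \beta$,
$$H^{n+1}_{\alpha+1}(\sigma, i_{n}) = H^{n+1}_{\alpha+1}(V_{\alpha+1, \beta}(\psi), i_{n}) = V_{\alpha+1, \beta}(H^{n+1}_{\beta}(\psi, i_{n})) = V_{\alpha+1, \beta}(\phi) = \phi'.$$
Second, by Remark \ref{vdir},
$$V_{\alpha, \alpha+1}(\sigma) = V_{\alpha, \alpha+1}(V_{\alpha+1, \beta}(\psi)) = V_{\alpha, \beta}(\psi) = \theta.$$
These two facts together say that $\sigma$ is an element of $\{\rho \in \Phi^{n+1}_{\alpha+1} \mid H^{n+1}_{\alpha+1}(\rho, i_{n}) = \phi'\}$ whose image under $V_{\alpha, \alpha+1}$ is $\theta$. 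By condition (\ref{hfour}) of Definition \ref{prodef} applied to $\phi' \in \Phi^{n}_{\alpha+1}$, this means precisely that $\theta \in E(\phi')$, completing the proof.

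The only place that feels like it could be a trap is the edge case $\alpha+1 = \beta$, but there $V_{\alpha+1, \beta}$ is the identity, $\sigma = \psi$, and the argument collapses to a direct invocation of condition (\ref{hfour}); so nothing extra is needed. Consequently, I do not expect any genuine obstacle here. Note that this approach bypasses Proposition \ref{proppartthree} entirely, so it also reproves the successor case of that proposition as a corollary.
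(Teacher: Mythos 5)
Your proof is correct and is essentially the paper's own argument: the forward inclusion via condition (\ref{ppath}), and for the reverse inclusion the decomposition $V_{\alpha,\beta} = V_{\alpha,\alpha+1}\circ V_{\alpha+1,\beta}$ (Remark \ref{vdir}) combined with the commutativity of the projections and condition (\ref{hfour}). The only cosmetic difference is that you apply Proposition \ref{proppartone} element-wise where the paper invokes Proposition \ref{propparttwo} (its set-image form) to push the witness set down to level $\alpha+1$.
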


\begin{proof}
When $\beta$ is a successor ordinal, this is Proposition \ref{proppartthree}, using condition (\ref{hfour}) of Definition \ref{prodef}.
For any $\beta$, the left-to-right inclusion is condition (\ref{ppath}) of Definition \ref{prodef}.
For the reverse inclusion, $$V_{\alpha, \beta}[\{\psi \in \Phi^{n+1}_{\beta} \mid H^{n+1}_{\beta}(\psi, i_{n}) = \phi\}]$$
is equal to $$V_{\alpha, \alpha+1}[V_{\alpha + 1, \beta}[\{\psi \in \Phi^{n+1}_{\beta} \mid H^{n+1}_{\beta}(\psi, i_{n}) = \phi\}]]$$
by Remark \ref{vdir}, and this is contained in
$$V_{\alpha, \alpha + 1}[\{\psi \in \Phi^{n+1}_{\alpha+1} \mid H^{n+1}_{\alpha+1}(\psi, i_{n}) = V_{\alpha+1,\beta}(\phi)\}],$$
by Proposition \ref{propparttwo}. Finally, this last term is equal to $E(V_{\alpha + 1, \beta}(\phi))$ by condition (\ref{hfour}) of
Definition \ref{prodef}.
\end{proof}

\section{Ranks and Scott sentences}\label{rssec}

The \emph{Scott rank} of a $\tau$-structure $M$ is the least ordinal $\alpha$ such that $V_{\alpha,\alpha+1}$ is injective on $\Phi_{\alpha + 1}(M)$ (see \cite{Hodges, Marker}, which use different terminology, of course, to define the same thing).
If $\alpha$ is the Scott rank of $M$, then $V_{\beta, \beta + 1}$ injective on $\Phi_{\beta + 1}(M)$ for all $\beta \geq \alpha$ as well. Proposition \ref{leastfixed} below verifies that Scott processes have the same property. We isolate the successor step of the proof as a separate proposition (the second part of the proposition is used in Remark \ref{ecover}).

\begin{prop}\label{leastfixedalt}  Let $\beta$ be an ordinal, and let $\langle \Phi_{\alpha} : \alpha \leq \beta + 2 \rangle$ be a Scott process. If
$\phi$ is an element of $\Phi_{\beta + 1}$, then each of the following conditions implies that $V_{\beta + 1, \beta + 2}^{-1}[\{\phi\}] \cap \Phi_{\beta + 2}$ is a singleton.
\begin{enumerate}
\item\label{lf1} $V_{\beta,\beta+1}^{-1}[\{\psi\}] \cap \Phi_{\beta + 1}$ is a singleton for each $\psi \in E(\phi)$.
\item\label{lf2} There exists a $\psi \in E(\phi)$ such that $V_{\beta, \beta + 2}^{-1}[\{\psi\}] \cap \Phi_{\beta + 2}$ is a singleton.
\end{enumerate}
\end{prop}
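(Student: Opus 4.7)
The plan is to exploit the fact that, by condition (\ref{nextstep}) of Definition \ref{psidef}, any $\theta \in \Phi_{\beta+2}$ with $V_{\beta+1,\beta+2}(\theta) = \phi$ is completely determined by the pair $(\phi, E(\theta))$. Fix $n$ with $\phi \in \Phi^{n}_{\beta+1}$. Existence of at least one such $\theta$ is immediate from condition (\ref{vfive}) of Definition \ref{prodef}, so I am left to show that $E(\theta) \subseteq \Phi^{n+1}_{\beta+1}$ is the same set for every such $\theta$. The workhorse in both cases will be Proposition \ref{proppartthree}, applied with $\alpha = \beta$ and its $\beta$ replaced by $\beta+1$: this yields $V_{\beta,\beta+1}[E(\theta)] = E(\phi)$, a rigid constraint depending only on $\phi$.

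For case (\ref{lf1}), I would first note that the hypothesis supplies a unique preimage $\tilde{\psi} \in \Phi^{n+1}_{\beta+1}$ of each $\psi \in E(\phi)$ under $V_{\beta,\beta+1}$ (with the index $n+1$ forced by Remark \ref{vone}). Since $V_{\beta,\beta+1}$ sends $E(\theta)$ onto $E(\phi)$ and each of those preimages is a singleton, every element of $E(\theta)$ must coincide with the corresponding $\tilde{\psi}$. Hence $E(\theta) = \{\tilde{\psi} : \psi \in E(\phi)\}$, a set depending only on $\phi$, so $\theta$ is uniquely determined.

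For case (\ref{lf2}), let $\psi \in E(\phi)$ be the witness whose $V_{\beta,\beta+2}$-preimage in $\Phi_{\beta+2}$ is the singleton $\{\rho\}$; by Remark \ref{vone} this forces $\rho \in \Phi^{n+1}_{\beta+2}$. Rather than going through $E(\theta)$, I would apply Proposition \ref{succcase} directly to $\theta$, with $\alpha = \beta$ and its $\beta$ replaced by $\beta+2$, obtaining
$$E(\phi) = V_{\beta,\beta+2}\bigl[\{\psi' \in \Phi^{n+1}_{\beta+2} \mid H^{n+1}_{\beta+2}(\psi', i_n) = \theta\}\bigr].$$
Some $\psi'$ in the bracketed set must then satisfy $V_{\beta,\beta+2}(\psi') = \psi$; the uniqueness hypothesis forces $\psi' = \rho$, and hence $\theta = H^{n+1}_{\beta+2}(\rho, i_n)$, a formula independent of the choice of $\theta$.

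The hard part will be case (\ref{lf2}): one has to notice that the hypothesis injects data at level $\beta+2$ which the $\Phi_{\beta+1}$-level argument cannot see on its own, and that Proposition \ref{succcase} is the precise mechanism for transferring this data down onto $\theta$ via a horizontal projection. The free-variable bookkeeping (ensuring $\rho \in \Phi^{n+1}_{\beta+2}$ so that $H^{n+1}_{\beta+2}(\rho, i_n) \in \Phi^{n}_{\beta+2}$) should be routine given Remark \ref{vone}.
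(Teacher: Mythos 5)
Your proof is correct and follows essentially the same route as the paper: case (\ref{lf1}) is identical (determine $E(\theta)$ from $E(\phi)$ via Proposition \ref{proppartthree} and the singleton preimages), and case (\ref{lf2}) recovers $\theta$ as $H^{n+1}_{\beta+2}(\rho, i_{n})$ exactly as the paper does, except that you invoke Proposition \ref{succcase} at level $\beta+2$ where the paper cites Proposition \ref{proppartthree} together with condition (\ref{hfour}) of Definition \ref{prodef} --- the same content, slightly repackaged (and, if anything, a little more streamlined).
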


\begin{proof}
  Suppose that $\phi' \in \Phi_{\beta + 2}$ is such that $V_{\beta + 1, \beta + 2}(\phi') = \phi$. Assuming the first condition, by Proposition \ref{proppartthree}, $V_{\beta, \beta+1}[E(\phi')] = E(\phi)$. Since $V_{\beta,\beta+1}^{-1}[\{\psi\}] \cap \Phi_{\beta + 1}$ is a singleton for each $\psi \in E(\phi)$, this implies that $E(\phi') = V_{\beta, \beta+1}^{-1}[E(\phi)] \cap \Phi_{\beta + 1}$, which uniquely determines $\phi'$.

  For the second part, let $\psi'$ be the unique member of $V_{\beta, \beta + 2}^{-1}[\{\psi\}] \cap \Phi_{\beta + 2}$. Since $\psi \in E(\phi)$, $V_{\beta + 1, \beta + 2}(\psi')$ is a member of $E(\phi')$, by Proposition \ref{proppartthree}.
  Let $n \in \omega$ be such that $\phi \in \Phi^{n}_{\beta + 1}$.
  Then $\phi' = H^{n+1}(\psi', i_{n})$, by part (\ref{hfour}) of Definition \ref{prodef}.
\end{proof}

The following is a consequence of part (\ref{lf1}) of Proposition \ref{leastfixedalt}.

\begin{cor}\label{fixeddown}
   Let $\beta$ be an ordinal, and let $\langle \Phi_{\alpha} : \alpha \leq \beta + 2 \rangle$ be a Scott process. Suppose that $n \in \omega$ is such that
   $V_{\beta, \beta + 1}$ is injective on $\Phi^{n+1}_{\beta + 1}$. Then $V_{\beta + 1, \beta + 2}$ is injective on $\Phi^{n}_{\beta + 2}$.
\end{cor}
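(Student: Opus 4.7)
The plan is to reduce the claim directly to part (\ref{lf1}) of Proposition \ref{leastfixedalt}. Suppose $\phi_{0}', \phi_{1}'$ lie in $\Phi^{n}_{\beta+2}$ with $V_{\beta+1, \beta+2}(\phi_{0}') = V_{\beta+1, \beta+2}(\phi_{1}') = \phi$. By Remark \ref{vone}, $\phi \in \Phi^{n}_{\beta+1}$. It suffices to show that $V_{\beta+1, \beta+2}^{-1}[\{\phi\}] \cap \Phi_{\beta+2}$ is a singleton, since then $\phi_{0}' = \phi_{1}'$ forces injectivity on $\Phi^{n}_{\beta+2}$.

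To apply part (\ref{lf1}) of Proposition \ref{leastfixedalt} to $\phi$, I would verify its hypothesis: for every $\psi \in E(\phi)$, the set $V_{\beta, \beta+1}^{-1}[\{\psi\}] \cap \Phi_{\beta+1}$ is a singleton. By condition (\ref{econtain}) of Definition \ref{prodef} together with the definition of $\Psi^{n}_{\beta+1}$ (which puts $E(\phi) \subseteq \Psi^{n+1}_{\beta}$ whenever $\phi \in \Psi^{n}_{\beta+1}$), each such $\psi$ belongs to $\Phi^{n+1}_{\beta}$. By Remark \ref{vone}, preimages under $V_{\beta, \beta+1}$ preserve the arity index, so this preimage set equals $V_{\beta,\beta+1}^{-1}[\{\psi\}] \cap \Phi^{n+1}_{\beta+1}$. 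Condition (\ref{vfive}) of Definition \ref{prodef} (applied with the arity preservation noted above) guarantees this set is nonempty, and the hypothesis that $V_{\beta,\beta+1}$ is injective on $\Phi^{n+1}_{\beta+1}$ guarantees it has at most one element, hence exactly one. Thus Proposition \ref{leastfixedalt}(\ref{lf1}) applies and gives the desired singleton conclusion.

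There is no real obstacle here; the only mildly delicate bookkeeping is making sure that the arity-$n$ injectivity we want at level $\beta+2$ is genuinely controlled by arity-$(n+1)$ injectivity at level $\beta+1$, which is why the sets $E(\phi)$ must be tracked through the definition of $\Psi^{n}_{\beta+1}$ to confirm that they sit at arity $n+1$.
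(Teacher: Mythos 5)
Your proof is correct and follows exactly the route the paper intends: the paper states this corollary as an immediate consequence of part (\ref{lf1}) of Proposition \ref{leastfixedalt}, and your verification that each $\psi \in E(\phi)$ lies in $\Phi^{n+1}_{\beta}$ (so that the arity-$(n+1)$ injectivity hypothesis, together with condition (\ref{vfive}), makes each $V_{\beta,\beta+1}$-preimage a singleton) is precisely the bookkeeping being left to the reader.
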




\begin{remark}
  It is natural to ask whether part (\ref{lf1}) of Proposition \ref{leastfixedalt} has a converse, in the sense that if $\langle \Phi_{\alpha} : \alpha \leq \beta + 1 \rangle$ is a Scott process and $\phi \in \Phi_{\beta + 1}$ and $\psi \in E(\phi)$ are such that $V_{\beta,\beta+1}^{-1}[\{\psi\}]$ has at least two members then there must exist a set $\Phi_{\beta + 2}$ such that $\langle \Phi_{\alpha} : \alpha \leq \beta + 2\rangle$ is a Scott process and $V_{\beta + 1, \beta + 2}^{-1}[\{\phi\}]$ is not a singleton. This is not the case in general, however, as by Proposition \ref{usent}, each function of the form $V_{\alpha, \alpha + 1}\restrict \Phi^{0}_{\alpha + 1}$ is always injective.
\end{remark}




\begin{prop}\label{leastfixed} If $\langle \Phi_{\alpha} : \alpha < \delta \rangle$ is a Scott process, $\beta < \gamma$ are ordinals with $\gamma + 1 < \delta$, and $V_{\beta,\beta + 1} \restrict \Phi_{\beta +1}$ is injective, then $V_{\gamma, \gamma + 1} \restrict \Phi_{\gamma + 1}$ is injective.
\end{prop}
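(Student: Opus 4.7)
The plan is to prove by transfinite induction on $\gamma \in [\beta, \delta)$ the strengthened statement that $V_{\beta, \gamma} \restrict \Phi_\gamma$ is injective. Applied at $\gamma + 1$, this yields the proposition: Remark \ref{vdir} gives $V_{\beta, \gamma+1} = V_{\beta, \gamma} \circ V_{\gamma, \gamma+1}$, and since $V_{\gamma, \gamma+1}$ sends $\Phi_{\gamma+1}$ into $\Phi_\gamma$ (condition (\ref{vfive}) of Definition \ref{prodef}), injectivity of the composition forces $V_{\gamma, \gamma+1}$ to be injective on $\Phi_{\gamma+1}$.

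The base case $\gamma = \beta$ is trivial and the case $\gamma = \beta + 1$ is the hypothesis. For the successor case $\gamma = \gamma' + 1$ with $\gamma' > \beta$, I would take $\phi_{1}, \phi_{2} \in \Phi_{\gamma'+1}$ with $V_{\beta, \gamma'+1}(\phi_{1}) = V_{\beta, \gamma'+1}(\phi_{2})$, decompose $V_{\beta, \gamma'+1} = V_{\beta, \gamma'} \circ V_{\gamma', \gamma'+1}$, and invoke the inductive hypothesis on $\gamma'$ to conclude that $V_{\gamma', \gamma'+1}(\phi_{1}) = V_{\gamma', \gamma'+1}(\phi_{2}) =: \rho$. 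Since each $\phi_{i}$ is determined by $\rho$ together with $E(\phi_{i})$ (condition (\ref{nextstep}) of Definition \ref{psidef}), matters reduce to showing $E(\phi_{1}) = E(\phi_{2})$. For this I would apply Proposition \ref{proppartthree} with its $\beta$ set to $\gamma'$ and its $\alpha$ set to our $\beta$, yielding $E(V_{\beta+1, \gamma'+1}(\phi_{i})) = V_{\beta, \gamma'}[E(\phi_{i})]$; the left-hand sides agree because both equal $E(V_{\beta+1, \gamma'}(\rho))$ by Remark \ref{vdir}, and one last use of the inductive hypothesis (injectivity of $V_{\beta, \gamma'}$ on $\Phi_{\gamma'}$) promotes $V_{\beta, \gamma'}[E(\phi_{1})] = V_{\beta, \gamma'}[E(\phi_{2})]$ to $E(\phi_{1}) = E(\phi_{2})$.

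For the limit case, suppose $\gamma$ is a limit ordinal and $\phi_{1}, \phi_{2} \in \Phi_\gamma$ satisfy $V_{\beta, \gamma}(\phi_{1}) = V_{\beta, \gamma}(\phi_{2})$. By condition (\ref{pathstage}) of Definition \ref{psidef}, each $\phi_{i}$ is the conjunction of its vertical projections $V_{\alpha, \gamma}(\phi_{i})$ for $\alpha < \gamma$, so it suffices to show $V_{\alpha, \gamma}(\phi_{1}) = V_{\alpha, \gamma}(\phi_{2})$ for every $\alpha < \gamma$. When $\alpha \leq \beta$, this is immediate from $V_{\alpha, \gamma} = V_{\alpha, \beta} \circ V_{\beta, \gamma}$; when $\alpha \in (\beta, \gamma)$, Remark \ref{vdir} gives $V_{\beta, \alpha}(V_{\alpha, \gamma}(\phi_{i})) = V_{\beta, \gamma}(\phi_{i})$, and the inductive hypothesis at $\alpha$ (that $V_{\beta, \alpha}$ is injective on $\Phi_\alpha$) delivers the needed equality.

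The main obstacle I expect is the limit case: a naive induction aiming to iterate the one-step lifting of Proposition \ref{leastfixedalt} stalls when the target $\gamma$ is a limit ordinal, because that proposition bridges levels $\beta + 1$ and $\beta + 2$ and offers no handle on $V_{\gamma - 1, \gamma}$ when $\gamma$ is a limit. Strengthening the induction hypothesis from injectivity of $V_{\gamma, \gamma+1}$ to injectivity of $V_{\beta, \gamma}$ sidesteps this by routing the limit case through the path structure that Definition \ref{psidef}(\ref{pathstage}) builds into formulas of $\Psi_\gamma$, rather than through a single predecessor step that does not exist.
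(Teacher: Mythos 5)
Your argument is correct, and at bottom it is the same transfinite induction from $\beta$ up to $\gamma$ that the paper runs; the difference is in how the induction is packaged, and the repackaging is worth a comment. The paper inducts directly on the statement that $V_{\gamma, \gamma+1}$ is injective on $\Phi_{\gamma+1}$, citing part (\ref{lf1}) of Proposition \ref{leastfixedalt} (or Corollary \ref{fixeddown}) for the successor step and Remark \ref{vdir} for the limit step. You instead strengthen the induction hypothesis to injectivity of $V_{\beta,\gamma}$ on $\Phi_{\gamma}$ and recover the stated proposition by factoring $V_{\beta,\gamma+1} = V_{\beta,\gamma} \circ V_{\gamma,\gamma+1}$. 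Your successor step then runs directly off Proposition \ref{proppartthree}, which is in any case the engine behind Proposition \ref{leastfixedalt}, so nothing essentially new happens there. The real payoff is at limit $\gamma$: the paper's one-line appeal to Remark \ref{vdir} is silent about why formulas at a limit level (and hence the sets $E(\phi_{i}) \subseteq \Phi_{\gamma}$ one ultimately needs to compare) are pinned down by their vertical projections, and your explicit use of condition (\ref{pathstage}) of Definition \ref{psidef} --- a formula at a limit level is literally the conjunction of its projections --- supplies exactly that ingredient, with the inductive injectivity of $V_{\beta,\alpha}$ on $\Phi_{\alpha}$ for $\alpha \in (\beta,\gamma)$ doing the rest. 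In short: same skeleton and the same underlying facts, but your reorganization makes the limit case fully explicit rather than gestured at, at no extra cost in hypotheses.
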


\begin{proof}
  Letting $\eta$ be such that $\gamma = \beta + \eta$, we prove the proposition by induction on $\eta$, for all $\beta$ and $\delta$ simultaneously.
  Applying the induction hypotheses, the limit case follows from Remark \ref{vdir}, and the successor case follows from part (\ref{lf1}) of Proposition \ref{leastfixedalt} (and also from Corollary \ref{fixeddown}).
\end{proof}



\begin{df} The \emph{rank} of a Scott process $\langle \Phi_{\alpha} : \alpha < \delta \rangle$ is the least $\beta$ such that $V_{\beta, \beta + 1} \restrict \Phi_{\beta + 1}$ is injective, if such a $\beta$ exists, and undefined otherwise.
\end{df}

The rank of (any suitably long set-sized initial segment of) the Scott process of a $\tau$-structure $M$ is the same then as the Scott rank of $M$.

\begin{remark}
Suppose that $\beta$ and $\gamma$ are ordinals, and $n \in \omega$ is such that $\gamma > \beta + n$. Suppose that $\langle \Phi_{\alpha} : \alpha < \gamma \rangle$ is a Scott process, and that $V_{\beta, \beta+1}$ is injective on $\Phi^{m}_{\beta + 1}$, for all $m > n$ in $\omega$. By Corollary \ref{fixeddown}, the rank of $\langle \Phi_{\alpha} : \alpha < \gamma \rangle$ is at most $\beta + n$ (since each $\Phi^{0}_{\alpha}$ is a singleton, $V_{\alpha, \alpha + 1} \restrict \Phi^{0}_{\alpha + 1}$ is injective for all $\alpha$).
\end{remark}

In the following definition, $j$ can equivalently be replaced with $i_{n}$, by condition (\ref{htwozero}) of Definition \ref{prodef}.

\begin{df}\label{beyonddef} Let $\beta$ and $\gamma$ be ordinals such that $\gamma > \beta + 1$, and let $$\langle \Phi_{\alpha} : \alpha < \gamma \rangle$$ be a Scott process.
Let $n$ be an element of $\omega$, and let $\phi$ be an element of $\Phi^{n}_{\beta}$. We say that the Scott process $\langle \Phi_{\alpha} : \alpha < \gamma \rangle$ is \emph{injective beyond} $\phi$ if for all $m \in \omega \setminus n$, all $j \in \cI_{n,m}$
and all $\psi \in \Phi^{m}_{\beta}$ such that $\phi = H^{m}_{\beta}(\psi, j)$, $V_{\beta, \beta+ 1}^{-1}[\{\psi\}] \cap \Phi_{\beta + 1}$ is a singleton.
\end{df}

\begin{remark}\label{preoneextension}
  Let $\beta < \delta < \gamma$ be ordinals, and let $\langle \Phi_{\alpha} : \alpha \leq \gamma \rangle$ be a Scott process. Let $n$ be an element of $\omega$, and let $\phi \in \Phi^{n}_{\beta}$ be such that $\langle \Phi_{\alpha} : \alpha < \gamma \rangle$ is injective beyond $\phi$. Then for all $m \in \omega \setminus n$, all $j \in \cI_{n,m}$ and all $\psi \in \Phi^{m}_{\beta}$ such that $\phi = H^{m}_{\beta}(\psi, j)$, $V_{\beta, \gamma}^{-1}[\{\psi\}] \cap \Phi_{\gamma}$ is a singleton. This follows from part (\ref{lf1}) of Proposition \ref{leastfixedalt}.
  It follows that $\langle \Phi_{\alpha} : \alpha \leq \gamma \rangle$ is injective beyond the unique member of $V^{-1}_{\beta, \delta}[\{\phi\}]$.
\end{remark}

\begin{remark}\label{oneextension}
  Let $\beta$ be an ordinal, and $n$ an element of $\omega$. Suppose that $$\langle \Phi_{\alpha} : \alpha \leq \beta + 1 \rangle$$ is a Scott process, and that $\phi \in \Phi^{n}_{\beta}$ is such that $\langle \Phi_{\alpha} : \alpha \leq \beta + 1 \rangle$ is injective beyond $\phi$. The proof of Scott's Isomorphism Theorem (Theorem 2.4.15 of \cite{Marker}; using $\bar{a}$ in place of $\emptyset$ at stage 0) shows that for any two countable $\tau$-structures $M$ and $N$ whose Scott processes agree with $\langle \Phi_{\alpha} : \alpha \leq \beta + 1 \rangle$ though level $\beta + 1$, if $\bar{a}$ is an $n$-tuple from $M$ and $\bar{b}$ is an $n$-tuple from $N$, each satisfying $\phi$ in their respective models, then there is an isomorphism of $M$ and $N$ sending $\bar{a}$ to $\bar{b}$.
  Alternately, one can show that for each ordinal $\gamma > \beta + 1$, there is a unique Scott process of length $\gamma$ extending $\beta$, using either
  Remark \ref{preoneextension} or Proposition \ref{fdetermine}.
\end{remark}

\begin{remark}\label{oneextension2}
  In the situation of Definition \ref{beyonddef}, $\langle \Phi_{\alpha} : \alpha \leq \beta + 1 \rangle$ need not have rank $\beta$. To see this, consider
  the Scott process of a countably infinite undirected graph $G$ consisting of an infinite set of nodes which are not connected to anything, and another infinite set of nodes which are all connected to each other, but not to themselves. The formula in $\Phi^{2}_{0}(G)$ corresponding to a connected pair has the property of $\phi$ in Remark \ref{oneextension}, but the Scott rank of $G$ is $1$, not $0$, since the unique member of $\Phi^{1}_{0}(G)$ has two successors in $\Phi^{1}_{1}(G)$.
\end{remark}

The following definition is inspired by Remarks \ref{oneextension} and \ref{oneextension2}.

\begin{df}
  The \emph{pre-rank} of a Scott process $\langle \Phi_{\alpha} : \alpha < \beta \rangle$ is the least $\gamma \leq \beta$ such that for all ordinals $\eta>\gamma$,
  there exists a unique Scott process of length $\eta$ extending $\langle \Phi_{\alpha} : \alpha < \gamma \rangle$ (if such a $\gamma$ exists). The Scott \emph{pre-rank} of a $\tau$-structure is the pre-rank of the sufficiently long initial segments of its Scott process.
\end{df}

The pre-rank of a Scott process is at most its rank, and Remark \ref{oneextension2} shows that it can be smaller. By Proposition \ref{laterbound}, if a Scott process has countable length, and all of its levels are countable, then its rank is at most $\omega$ more than its pre-rank.  Proposition \ref{prerankbound} gives a tighter bound in the situation of Definition \ref{beyonddef}.



\begin{prop}\label{prerankbound}
  Let $\beta$ be an ordinal, and $n$ an element of $\omega$. Suppose that $\langle \Phi_{\alpha} : \alpha \leq \beta + \omega \rangle$ is a Scott process, and that $\phi \in \Phi^{n}_{\beta}$ is such that $\langle \Phi_{\alpha} : \alpha \leq \beta + \omega \rangle$ is injective beyond $\phi$.
  Then $\langle \Phi_{\alpha} : \alpha \leq \beta + \omega \rangle$ has rank at most $\beta + n$.
\end{prop}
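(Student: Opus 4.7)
The plan is to induct on $n$. For the base case $n=0$, $\Phi^0_\beta$ is a singleton by Proposition \ref{usent}, so $\phi$ is its unique member. Then for any $m \in \omega$ and any $\psi \in \Phi^m_\beta$, the unique injection $\emptyset \in \cI_{0,m}$ satisfies $H^m_\beta(\psi, \emptyset) \in \Phi^0_\beta = \{\phi\}$, so the injective-beyond hypothesis directly asserts that $V^{-1}_{\beta, \beta+1}[\{\psi\}] \cap \Phi_{\beta+1}$ is a singleton for every $\psi \in \Phi_\beta$, giving rank at most $\beta$.

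For the inductive step, I assume the result for $n-1$ and, given $\phi \in \Phi^n_\beta$ with the injective-beyond property, let $\phi' \in \Phi^n_{\beta+1}$ be the unique preimage of $\phi$ (Remark \ref{preoneextension}) and set $\phi'' = H^n_{\beta+1}(\phi', i_{n-1}) \in \Phi^{n-1}_{\beta+1}$. The goal becomes to show that $\langle \Phi_\alpha : \beta+1 \leq \alpha \leq \beta+\omega\rangle$, regarded as a Scott process of length $\omega+1$ beginning at level $\beta+1$, is injective beyond $\phi''$; the inductive hypothesis then gives rank at most $(\beta+1)+(n-1) = \beta+n$.

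To establish injectivity beyond $\phi''$, I fix $m \geq n-1$, an injection $j \in \cI_{n-1,m}$, and a $\psi \in \Phi^m_{\beta+1}$ with $H^m_{\beta+1}(\psi, j) = \phi''$, and aim to show that $V^{-1}_{\beta+1,\beta+2}[\{\psi\}] \cap \Phi_{\beta+2}$ is a singleton. I apply part (\ref{lf2}) of Proposition \ref{leastfixedalt}: it suffices to exhibit $\xi \in E(\psi) \subseteq \Phi^{m+1}_\beta$ such that $V^{-1}_{\beta,\beta+2}[\{\xi\}] \cap \Phi_{\beta+2}$ is a singleton, and by Remark \ref{preoneextension} any $\xi \in \Phi^{m+1}_\beta$ extending $\phi$ (i.e., with $H^{m+1}_\beta(\xi, j^*) = \phi$ for some $j^* \in \cI_{n,m+1}$) automatically has this property. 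By condition (\ref{hfour}) of Definition \ref{prodef} combined with Proposition \ref{proppartone}, producing such a $\xi$ in $E(\psi)$ is equivalent to producing a common extension $\sigma \in \Phi^{m+1}_{\beta+1}$ of $\psi$ (via $i_m$) and $\phi'$ (via an injection $j^* \in \cI_{n,m+1}$ that agrees with $j$ on $X_{n-1}$ and sends $x_{n-1}$ to the new variable $x_m$).

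The hard part will be constructing this amalgamation $\sigma$. Condition (\ref{combine}) of Definition \ref{prodef}, applied to $\phi'$ and $\psi$ at level $\beta+1$, yields $\theta \in \Phi^{n+m}_{\beta+1}$ extending both, but on too many variables. The essential observation is that $\psi$ and $\phi'$ share the reduct $\phi''$, which forces the two injections $i_{n-1}$ and $j_c \circ j$ from $X_{n-1}$ to $X_{n+m}$ (with $j_c$ the combine injection for $\psi$) to yield the same projection of $\theta$, both equal to $\phi''$. Exploiting this coincidence together with closure of $\Phi^{m+1}_{\beta+1}$ under variable permutations (condition (\ref{htwozero})) and the fiberwise surjectivity of horizontal projection (Remark \ref{jandi}), one extracts the required amalgamation $\sigma \in \Phi^{m+1}_{\beta+1}$. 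This identification of variables along the $\phi''$-overlap inside $\theta$ is where the bulk of the technical work resides, and it is not immediate from the axioms; it is the main obstacle of the proof.
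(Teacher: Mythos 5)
Your overall strategy (induction on $n$, trading the formula $\phi\in\Phi^{n}_{\beta}$ for $\phi''=H^{n}_{\beta+1}(\phi',i_{n-1})\in\Phi^{n-1}_{\beta+1}$ and shifting the base level up by one) is a legitimate reorganization of the argument, and your base case and your use of part (\ref{lf2}) of Proposition \ref{leastfixedalt} together with Remark \ref{preoneextension} are fine. The paper instead inducts on $p\leq n$ via a set $\Upsilon$ of formulas extending $\phi$, using part (\ref{lf1}); the two organizations are morally the same ``one variable for one level'' trade. However, your proof has a genuine gap exactly where you say the ``bulk of the technical work resides,'' and the route you sketch for closing it would fail. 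Producing a $\sigma\in\Phi^{m+1}_{\beta+1}$ extending $\psi$ via $i_{m}$ and $\phi'$ via a $j^{*}$ agreeing with $j$ on $X_{n-1}$ is an amalgamation of $\psi$ and $\phi'$ \emph{over their common restriction} $\phi''$. That is precisely the amalgamation property of Definition \ref{unidef}, which is an additional hypothesis on a level of a Scott process, not a consequence of the axioms (the paper notes that a failure of amalgamation gives a counterexample to Proposition \ref{amalgamation} with $n=m+1$). In particular, the ``coincidence'' that the two injections $i_{n-1}$ and $j_{c}\circ j$ give the same projection $\phi''$ of the formula $\theta$ supplied by condition (\ref{combine}) does \emph{not} let you identify the corresponding variables: two disjoint subtuples of a realization of $\theta$ may both realize $\phi''$ without there being any extension in which they coincide. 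So the extraction of $\sigma$ from $\theta$ cannot be carried out in general.

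Fortunately the step can be closed without any amalgamation, because you do not need $\sigma$ at level $\beta+1$; you only need $\xi\in E(\psi)$ at level $\beta$ with $H^{m+1}_{\beta}(\xi,j\cup\{(x_{n-1},y)\})=\phi$ for some $y\in X_{m+1}\setminus\range(j)$. First, $\phi\in E(\phi'')$: by condition (\ref{hfour}) of Definition \ref{prodef}, $E(\phi'')=V_{\beta,\beta+1}[\{\rho\in\Phi^{n}_{\beta+1}\mid H^{n}_{\beta+1}(\rho,i_{n-1})=\phi''\}]$, and $\phi'$ is such a $\rho$ with $V_{\beta,\beta+1}(\phi')=\phi$. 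Second, since $H^{m}_{\beta+1}(\psi,j)=\phi''$, condition (\ref{projfact}) of Definition \ref{hdef} gives, as part of the definition of the horizontal projection,
$$E(\phi'')=H^{m+1}_{\beta}\bigl[E(\psi)\times\{j\cup\{(x_{n-1},y)\}\mid y\in X_{m+1}\setminus\range(j)\}\bigr],$$
so the required $\xi\in E(\psi)\subseteq\Phi^{m+1}_{\beta}$ exists outright (this is the same move used in the proof of Proposition \ref{threadcon}). Since $\xi$ extends $\phi$ via the injection $j\cup\{(x_{n-1},y)\}\in\cI_{n,m+1}$, Remark \ref{preoneextension} gives that $V_{\beta,\beta+2}^{-1}[\{\xi\}]\cap\Phi_{\beta+2}$ is a singleton, and part (\ref{lf2}) of Proposition \ref{leastfixedalt} finishes your inductive step. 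With this replacement your argument goes through.
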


\begin{proof}
  By Remark \ref{preoneextension}, for each $m \in \omega\setminus n$, $V^{-1}_{\beta, \beta + \omega}[\{\psi\}] \cap \Phi_{\beta + \omega}$ is a singleton for each $\psi \in \Phi^{m}_{\beta}$ such that $\phi = H^{m}_{\beta}(\psi, j)$ for some $j \in \cI_{n,m}$. Let $\Upsilon$ be the set of $\psi \in \Phi_{\beta}$ for which $V^{-1}_{\beta, \beta + n + 1}[\{\psi\}] \cap \Phi_{\beta + \omega}$ is a singleton. We show by induction on $p \leq n$ that if $q \in \omega$ and $\theta \in \Phi^{q}_{\beta + p}$ is such that $\theta = H^{q + p}_{\beta + p}(\rho, i_{q})$ for some $\rho \in \Phi^{q+p}_{\beta + p}$ such that $V_{\beta, \beta + p}(\rho) \in \Upsilon$, then $V^{-1}_{\beta + p, \beta + n + 1}[\{\theta\}]$ is a singleton. For $p = 0$ this follows from the definition of $\Upsilon$. The induction step from $p$ to $p+1$ (for some $\theta \in \Phi^{q}_{\beta + p + 1}$, for some $q \in \omega$) follows from part (\ref{lf1}) of Proposition \ref{leastfixedalt}, applied to $$V_{\beta + p, \beta + p + 1}(H^{q+p+1}_{\beta + p + 1}(\rho, i_{q + 1})),$$ where $\rho \in \Phi^{q+p+1}_{\beta + p+1}$ is such that $\theta = H^{q + p + 1}_{\beta + p + 1}(\rho, i_{q})$ and $V_{\beta, \beta + p+1}(\rho) \in \Upsilon$. Finally, this statement for $n = p$ implies the proposition, by condition (\ref{combine}) of Definition \ref{prodef}.
\end{proof}




\begin{df}\label{maxdef} Given an ordinal $\delta$ and a set $\Phi \subseteq \Psi_{\delta}$, the \emph{maximal completion} of $\Phi$ is
the set of
$\phi \in \Psi_{\delta + 1}$ such that for some $n \in \omega$ and some $\phi' \in \Phi \cap \Psi^{n}_{\delta}$, $V_{\delta, \delta + 1}(\phi) = \phi'$, and $$E(\phi) = \{ \psi \in \Phi \cap \Psi^{n+1}_{\delta} \mid H^{n+1}_{\delta}(\psi, i_{n}) = \phi'\}.$$
\end{df}

The extension of a Scott process of successor length by the maximal completion of its last level may not be a Scott process (see Proposition \ref{canmax} below).


\begin{remark}\label{fivepointsix} By Proposition \ref{proppartfour}, if $\langle \Phi_{\alpha} : \alpha < \delta \rangle$ is a Scott process, and $\beta$ is an ordinal such that $\beta + 1< \delta$, then $V_{\beta, \beta + 1} \restrict \Phi_{\beta + 1}$ is injective if and only if $\Phi_{\beta + 1}$ is the maximal completion of
$\Phi_{\beta}$.
\end{remark}



The following definition describes the situation in which no formula $\phi$ has incompatible horizontal extensions.

\begin{df}\label{unidef} Given an ordinal $\delta$, a set $\Phi \subseteq \Psi_{\delta}$ satisfies the \emph{amalgamation property} (or \emph{amalgamates}) if for all $m < n \in \omega$, $\phi \in \Phi \cap \Psi^{m+1}_{\delta}$, and $\psi \in \Phi \cap \Psi^{n}_{\delta}$ such that $H^{m+1}(\phi,i_{m}) = H^{n}_{\delta}(\psi, i_{m})$, there exist $\theta \in \Phi \cap \Phi^{n+1}_{\delta}$ and $y \in X_{n+1} \setminus X_{m}$ such that $H^{m+1}(\theta, i_{m} \cup \{(x_{m}, y)\}) = \phi$ and $H^{m+1}(\theta, i_{n}) = \psi$.
\end{df}

\begin{remark}\label{unidef2} Given an ordinal $\delta$ and a set $\Phi \subseteq \Phi_{\delta}$ satisfying condition (\ref{htwozero}) of Definition \ref{prodef} (i.e., closure under the functions $H^{n}_{\delta}$ ($n \in \omega$)), the amalgamation property for a set $\Phi \subseteq \Psi_{\delta}$ is equivalent to the statement that for all $m \leq n \in \omega$, $\phi \in \Phi \cap \Psi^{m}_{\delta}$, $j \in \mathcal{F}_{m,n}$ and $\psi \in \Phi \cap \Psi^{n}_{\delta}$ such that $\phi = H^{n}_{\delta}(\psi, j)$,
$$\{ \theta \in \Phi \cap \Psi^{m+1}_{\delta} \mid H^{m+1}_{\delta}(\theta, i_{m}) = \phi\}$$ is the same as
$$H^{n+1}_{\delta}[\{ \rho \in \Phi \cap \Psi^{n+1}_{\delta} \mid H^{n+1}_{\delta}(\rho, i_{n}) = \psi\} \times \{j \cup \{(x_{m},y)\} \mid y \in (X_{n+1} \setminus \range(j))\}].$$ This follows immediately from the definitions (using part (\ref{projfact}) of Definition \ref{hdef}).
\end{remark}


\begin{remark} The set in the second displayed formula in Definition \ref{unidef} is always contained in the set in the first, by part (\ref{hcompose}) of Remark \ref{tworems}.
\end{remark}


\begin{prop}\label{canmax}
  The extension of a nonempty Scott process of nonlimit length by the maximal completion of its last level induces a Scott process if and only if
  its last level amalgamates.
\end{prop}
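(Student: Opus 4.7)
Write $\gamma$ for the largest index of the given process, so that we are asking when $\langle \Phi_\alpha : \alpha \leq \gamma+1\rangle$ is a Scott process with $\Phi_{\gamma+1}$ defined to be the maximal completion of $\Phi_\gamma$. The strategy is to introduce a single bridge statement: \emph{$\Phi_{\gamma+1}$ is closed under every horizontal projection $H^n_{\gamma+1}(\cdot, j)$ with $j \in \cF_{m,n}$ and $m \leq n$ in $\omega$}. I will argue first that this closure is equivalent to amalgamation of $\Phi_\gamma$, and then that, granted the other data, it is equivalent to the conjunction of the Scott process axioms on the extended sequence.

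The heart of the proof is the first equivalence. For any $\phi_* \in \Phi_{\gamma+1} \cap \Psi^n_{\gamma+1}$ and $m \leq n$, Proposition \ref{proppartone} already forces $V_{\gamma, \gamma+1}(H^n_{\gamma+1}(\phi_*, i_m)) \in \Phi_\gamma$, and by part (\ref{projfact}) of Definition \ref{hdef}
$$E(H^n_{\gamma+1}(\phi_*, i_m)) = H^{n+1}_\gamma\bigl[E(\phi_*) \times \{i_m \cup \{(x_m, y)\} : y \in X_{n+1} \setminus X_m\}\bigr].$$
So $H^n_{\gamma+1}(\phi_*, i_m)$ lies in the maximal completion exactly when this set equals $\{\sigma \in \Phi_\gamma \cap \Psi^{m+1}_\gamma : H^{m+1}_\gamma(\sigma, i_m) = H^n_\gamma(V_{\gamma,\gamma+1}(\phi_*), i_m)\}$. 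The $\subseteq$ inclusion is a direct calculation from part (\ref{hcompose}) of Remark \ref{tworems} together with the $E(\phi_*)$-description in Definition \ref{maxdef}. The $\supseteq$ inclusion, once unwound, is precisely the amalgamation property applied to the pair $(\sigma, V_{\gamma,\gamma+1}(\phi_*))$, so it holds if and only if $\Phi_\gamma$ amalgamates. Closure under a permutation $j \in \cI_{n,n}$ requires no appeal to amalgamation (the constraint $y \in X_{n+1} \setminus X_m$ reduces to the single element $x_n$, and both inclusions follow from closure of $\Phi_\gamma$ under permutations), and the general case factors as a permutation composed with an inclusion $i_m$ via part (\ref{hcompose}) of Remark \ref{tworems}.

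For the second equivalence, I will verify each clause of Definition \ref{prodef} for the extended sequence under the assumption of closure. Conditions (\ref{formone}) and (\ref{econtain}) are immediate from Definition \ref{maxdef}. Condition (\ref{vfive}) follows by lifting each $\phi' \in \Phi_\gamma$ to the unique member of $\Psi_{\gamma+1}$ specified by Definition \ref{maxdef}, using that $V \restrict \Phi_{\gamma+1}$ is injective because each $\phi \in \Phi_{\gamma+1}$ is determined by the pair $(V(\phi), E(\phi))$ while $E(\phi)$ is in turn determined by $V(\phi)$ alone. Conditions (\ref{htwozero}) and (\ref{htwo}) are the closure itself, combined with this $V$-injectivity to turn $V$-equality into equality; (\ref{hfour}) is immediate from Definition \ref{maxdef} and $V$-injectivity; (\ref{ppath}) at $\alpha = \gamma$ is (\ref{hfour}), and for $\alpha < \gamma$ it reduces via Remark \ref{vdir} to the analogous condition on the original process; and (\ref{combine}) is obtained by lifting a witness from $\Phi_\gamma$ and invoking closure plus $V$-injectivity. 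Conversely, if $\langle \Phi_\alpha : \alpha \leq \gamma+1 \rangle$ is a Scott process then Remark \ref{jandi} hands us the closure property, so the first equivalence yields amalgamation. The main obstacle throughout is the $\supseteq$ direction of the $E$-equality in the previous paragraph: that is where amalgamation genuinely enters, and everything else amounts to bookkeeping with the horizontal and vertical projection identities.
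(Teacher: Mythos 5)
Your proof is correct and follows essentially the same route as the paper's: both reduce all the nontrivial axioms to the question of whether horizontal projections of members of the maximal completion land back in the maximal completion, and both identify that question, via condition (\ref{projfact}) of Definition \ref{hdef} and Definition \ref{maxdef}, with the amalgamation property as reformulated in Remark \ref{unidef2}. Your write-up just makes explicit the bookkeeping (the $V$-injectivity of the maximal completion and the reduction of general $j$ to permutations composed with $i_{m}$) that the paper's terser proof leaves to the reader.
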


\begin{proof} Let $\langle \Phi_{\alpha} : \alpha \leq \delta \rangle$ be a Scott process. Conditions (\ref{formone})-(\ref{vfive}) of Definition
\ref{prodef} are always satisfied by the extension by the maximal completion. The other conditions depend on whether the functions $H^{n}_{\delta + 1}$ ($n \in \omega$) lift the actions of the functions $H^{n}_{\delta}$ ($n \in \omega$), i.e., whether whenever $n \in \omega$, $\psi \in \Phi^{n}_{\delta}$ and $\psi'$ is the unique member of $V_{\delta, \delta+ 1}^{-1}[\{\psi\}]$ in the maximal completion of $\Phi_{\delta}$, $$V_{\delta, \delta + 1}[H^{n}_{\delta+1}(\psi', j)] =
H^{n}_{\delta}(\psi, j).$$ Comparing the condition (\ref{projfact}) of Definition \ref{hdef} with Definition \ref{maxdef} shows that is exactly the statement that $\Phi_{\delta}$ amalgamates as expressed in Remark \ref{unidef2}.
\end{proof}

We conclude this section by giving a restatement of the amalgamation property which will be useful in Section \ref{arbcardsec}. A failure of amalgamation gives a counterexample to Proposition \ref{amalgamation} with $n = m+1$.







\begin{prop}\label{amalgamation}
Suppose that $\langle \Phi_{\alpha} : \alpha \leq \delta \rangle$ is Scott process whose last level amalgamates, and that $m,n,p \in \omega$ are such that
$m \leq \min\{n,p\}$.
Suppose now that
$j \in \cF_{m,n}$, $k \in \cF_{m,p}$, $\psi \in \Phi^{n}_{\delta}$ and $\theta \in \Phi^{p}_{\delta}$ are such that $$H^{n}_{\delta}(\psi, j) = H^{p}_{\delta}(\theta, k).$$ Then there exist $q \in \omega \setminus \max\{n,p\}$, a formula $\rho \in \Phi^{q}_{\delta}$ and functions $j' \in \cF_{n,q}$ and $k' \in \cF_{p,q}$ such that $X_{q} = \range(j') \cup \range(k')$, $j' \circ j = k' \circ k$, $\psi = H^{q}_{\delta}(\rho,j')$ and $\theta = H^{q}_{\delta}(\rho, k')$.
\end{prop}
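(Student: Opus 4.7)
The plan is to first reduce to the case $j = k = i_m$ using horizontal projection by permutations, then build an amalgam $\tilde\rho$ in a possibly oversized space by induction on $p - m$, and finally project the amalgam down onto exactly the variables in the union of the two ranges.

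For the reduction, I would pick permutations $\sigma \in \cF_{n,n}$ and $\tau \in \cF_{p,p}$ with $\sigma \circ i_m = j$ and $\tau \circ i_m = k$. By condition (\ref{htwozero}) of Definition \ref{prodef}, the formulas $\psi_0 = H^n_\delta(\psi, \sigma)$ and $\theta_0 = H^p_\delta(\theta, \tau)$ lie in $\Phi^n_\delta$ and $\Phi^p_\delta$ respectively, and by Remark \ref{tworems}(\ref{hcompose}) they satisfy $H^n_\delta(\psi_0, i_m) = H^p_\delta(\theta_0, i_m)$. Given a solution $(q, \rho, j'_0, k'_0)$ for $(\psi_0, \theta_0, i_m, i_m)$, the tuple $(q, \rho, j'_0 \circ \sigma^{-1}, k'_0 \circ \tau^{-1})$ solves the original problem; the range condition is preserved since ranges are unchanged under postcomposition with a permutation.

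Assuming now $j = k = i_m$, set $\chi = H^n_\delta(\psi, i_m) = H^p_\delta(\theta, i_m)$. By induction on $p - m$ I would first produce some $\tilde q \ge \max\{n,p\}$, $\tilde\rho \in \Phi^{\tilde q}_\delta$, $\tilde j' \in \cF_{n, \tilde q}$ and $\tilde k' \in \cF_{p, \tilde q}$ satisfying $\tilde j' \circ i_m = \tilde k' \circ i_m$, $\psi = H^{\tilde q}_\delta(\tilde\rho, \tilde j')$ and $\theta = H^{\tilde q}_\delta(\tilde\rho, \tilde k')$, but possibly without the range condition. The base case $p = m$ forces $\theta = \chi$, so $\tilde q = n$, $\tilde\rho = \psi$, $\tilde j' = i_n$, $\tilde k' = i_m$ suffice. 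For the inductive step, let $\theta^- = H^p_\delta(\theta, i_{p-1}) \in \Phi^{p-1}_\delta$ and apply the inductive hypothesis to $(\psi, \theta^-)$ to obtain $(\tilde q_0, \rho_0, \tilde j'_0, \tilde k'_0)$ with $H^{\tilde q_0}_\delta(\rho_0, \tilde k'_0) = \theta^-$. The amalgamation property, in the equivalent form of Remark \ref{unidef2}, applied with $\theta$ as the short extension of $\theta^-$ and $\rho_0$ as the long extension via $\tilde k'_0$, yields $\tilde\rho \in \Phi^{\tilde q_0 + 1}_\delta$ and $y \in X_{\tilde q_0 + 1} \setminus \range(\tilde k'_0)$ with $H^{\tilde q_0 + 1}_\delta(\tilde\rho, i_{\tilde q_0}) = \rho_0$ and $H^{\tilde q_0 + 1}_\delta(\tilde\rho, \tilde k'_0 \cup \{(x_{p-1}, y)\}) = \theta$. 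Setting $\tilde q = \tilde q_0 + 1$, $\tilde j' = \tilde j'_0$ (reinterpreted in $\cF_{n, \tilde q}$), and $\tilde k' = \tilde k'_0 \cup \{(x_{p-1}, y)\}$, the required equalities and the compatibility $\tilde j' \circ i_m = \tilde k' \circ i_m$ follow from Remark \ref{tworems}(\ref{hcompose}) together with the inductive hypothesis.

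Finally, to achieve the range condition, let $R = \range(\tilde j') \cup \range(\tilde k') \subseteq X_{\tilde q}$ and $q = |R|$; since $\range(\tilde j')$ and $\range(\tilde k')$ have sizes $n$ and $p$, we have $q \ge \max\{n,p\}$. Pick any $\iota \in \cF_{q, \tilde q}$ with $\range(\iota) = R$, and set $\rho = H^{\tilde q}_\delta(\tilde\rho, \iota) \in \Phi^q_\delta$, $j' = \iota^{-1} \circ \tilde j' \in \cF_{n, q}$, $k' = \iota^{-1} \circ \tilde k' \in \cF_{p, q}$. Then $\range(j') \cup \range(k') = \iota^{-1}(R) = X_q$, and the remaining conditions transfer via Remark \ref{tworems}(\ref{hcompose}). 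The main obstacle is that the amalgamation property produces the ``new'' variable $y$ at a position we cannot control, so the amalgam $\tilde\rho$ generally inhabits a space larger than $q$; the final projection step collapses it onto precisely the necessary variables.
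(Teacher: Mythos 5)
Your proof is correct and follows essentially the same route as the paper's: an induction that adjoins one variable at a time, invoking the amalgamation property (in the form of Remark \ref{unidef2}) at each step. The differences are only organizational --- you induct on the arity of $\theta$ rather than of $\psi$, normalize $j$ and $k$ to $i_{m}$ up front instead of carrying general injections through the induction, and defer the range condition $X_{q} = \range(j') \cup \range(k')$ to a final projection onto the union of the ranges rather than maintaining it inductively, which lets you avoid the paper's two-case analysis at each successor step.
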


\begin{proof}
  Fixing $m$ and $p$, we prove the proposition by induction on $n$. If $n =m$, then we can let $q = p$, $\rho = \theta$, $k' = i_{p}$ and $j' = k \circ j^{-1}$.
  Suppose that the proposition holds for $n$.
  Let $j \in \cF_{m,n+1}$, $k \in \cF_{m,p}$, $\psi \in \Phi^{n+1}_{\delta}$ and $\theta \in \Phi^{p}_{\delta}$ be such that $H^{n+1}_{\delta}(\psi, j) = H^{p}_{\delta}(\theta, k)$. Let $f \in\cF_{n+1,n+1}$ be the identity function if $x_{n} \not\in\range(j)$; otherwise, fix $n'$ such that $x_{n'} \not\in\range(j)$ and let $f$ map $x_{n}$ and $x_{n'}$ to each other and fix the rest of $X_{n+1}$. Let $\psi_{0} = H^{n+1}_{\delta}(\psi, f)$. Then
  $x_{n} \not \in \range(f \circ j)$.  Let $\psi_{1} = H^{n+1}_{\delta}(\psi_{0}, i_{n})$. By the second part of Remark \ref{tworems},
 \begin{center} $\begin{array} {lcl} H^{n}_{\delta}(\psi_{1}, f \circ j) & = & H^{n}_{\delta}(H^{n+1}_{\delta}(\psi_{0},i_{n}),f \circ j) \\ & = & H^{n+1}_{\delta}(\psi_{0},i_{n} \circ (f \circ j))\\ & = & H^{n+1}_{\delta}(\psi_{0}, f \circ j) \\ & = & H^{n+1}_{\delta}(H^{n+1}_{\delta}(\psi_{0}, f), j) \\
 &=&  H^{n+1}_{\delta}(\psi, j)\\ & =& H^{p}_{\delta}(\theta, k).\end{array}$\end{center}
 Applying the induction hypothesis to
  $f \circ j$, $k$, $\psi_{1}$ and $\theta$, we get $q_{0} \in \omega \setminus \max\{n,p\}$, a formula $\rho_{0} \in \Phi^{q_{0}}_{\delta}$ and functions $j_{0} \in \cF_{n,q_{0}}$ and $k' \in \cF_{p,q_{0}}$ such that $$X_{q_{0}} = \range(j_{0}) \cup \range(k'),$$ $j_{0} \circ (f \circ j) = k' \circ k$, $\psi_{1} = H^{q_{0}}_{\delta}(\rho_{0},j_{0})$ and $\theta = H^{q_{0}}_{\delta}(\rho_{0}, k')$.

  Suppose first that there exists a $y \in X_{q_{0}} \setminus \range(j_{0})$ such that $$\psi_{0} = H^{q_{0}}_{\delta}(\rho_{0}, j_{0} \cup \{(x_{n}, y)\}).$$
  Then $q_{0}$, $\rho_{0}$ and $k'$ are as desired. If $f = i_{n+1}$, then we can let $j' = j_{0} \cup \{(x_{n}, y)\}$ and we are done. Otherwise, let $j'$ send $x_{n'}$ to $y$,  $x_{n}$ to $j_{0}(x_{n'})$ and every other member of $X_{n}$ to the same place that $j_{0}$ does (i.e., let $j' = (j_{0} \cup \{(x_{n}, y)\}) \circ f$).
  Then $j' \circ j = k' \circ k$, and
  \begin{center}$\begin{array}{lcl} \psi &=& H^{n+1}_{\delta}(\psi_{0}, f)\\
  &=& H^{n+1}_{\delta}(H^{q}_{\delta}(\rho, j_{0} \cup \{(x_{n}, y)\}), f)\\
  &=& H^{q}_{\delta}(\rho, (j_{0} \cup \{(x_{n}, y)\})\circ f)\\
  &=& H^{q}_{\delta}(\rho, j'),
  \end{array}$\end{center}
  as desired.

  Finally, suppose that there is no such $y \in X_{q_{0}} \setminus \range(j_{0})$. Putting together the amalgamation property of $\Phi$ and the equation $\psi_{1} = H^{n+1}_{\delta + 1}(\psi_{0}, i_{n})$, we get that there exist a formula $\rho \in \Phi^{q_{0}+1}_{\delta}$ such that $H^{q_{0}+1}_{\delta}(\rho, i_{q_{0}}) = \rho_{0}$ and a $y \in X_{q_{0} + 1} \setminus \range(j_{0})$ such that $H^{q_{0}+1}_{\delta}(\rho, j_{0} \cup \{(x_{n}, y)\}) = \psi_{0}$. Then $k'$, $\rho$, and $q = q_{0} + 1$ are as desired. If $f = i_{n+1}$, then we can let $j' = j_{0} \cup \{(x_{n}, y)\}$, and we are done.
  Otherwise, as above,
  let $j' = (j_{0} \cup \{(x_{n}, y)\}) \circ f$.
  Then again $j' \circ j = k' \circ k$ and $\psi = H^{q}_{\delta}(\rho, j')$,
  as desired.
\end{proof}

\section{Building countable models}\label{threadsec}

In this section we show that any Scott process of successor length has a countable model if its last level is countable. This in turn implies that such a sequence can be extended to any given ordinal length (although the rank of the Scott process of length $\omega_{1}$ corresponding to a countable model is countable).

\begin{df}\label{threaddef} Given an ordinal $\beta$, and a countable set $\Phi \subseteq \Psi_{\beta}$, a \emph{thread} through $\Phi$ is a set of
formulas $\{ \phi_{n} : n \in \omega \} \subseteq \Phi$ such that
  \begin{enumerate}
  \item for all $n \in \omega$, $\phi_{n} \in \Psi^{n}_{\beta}$;
  \item\label{allin} for all $m < n$ in $\omega$, $\phi_{m} = H^{n}_{\beta}(\phi_{n}, i_{m})$;
  \item\label{goodlift} for all $m \in \omega$, all $\alpha < \beta$, and all $\psi \in E(V_{\alpha + 1, \beta}(\phi_{m}))$, there exist an $n \in \omega \setminus (m+1)$ and a $y \in X_{n} \setminus X_{m}$ such that $\psi = V_{\alpha, \beta}(H^{n}_{\beta}(\phi_{n}, i_{m} \cup \{(x_{m}, y)\}))$.
  \end{enumerate}
\end{df}


\begin{remark} If $\beta$ is a successor ordinal, condition (\ref{goodlift}) of Definition \ref{threaddef} is equivalent to the restriction of the
condition to the case where $\alpha = \beta - 1$. This follows from Proposition \ref{proppartthree}. Similarly, condition (\ref{goodlift}) of Definition \ref{threaddef} is equivalent to the restriction of the condition to the set of $\alpha$ in any cofinal subset of $\beta$.
\end{remark}

\begin{remark}\label{cofrestr}
  Suppose that $\langle \Phi_{\alpha} : \alpha \leq \delta \rangle$ is a Scott process, and $\beta < \delta$ is such that $V_{\beta, \delta} \restrict \Phi_{\delta}$ is injective. Then the $V_{\beta+1, \delta}$-preimage of a thread through $\Phi_{\beta+1}$ is a thread through $\Phi_{\delta}$. This follows from Remark \ref{vone}, Proposition \ref{proppartone} and Proposition \ref{proppartthree}.
\end{remark}

\begin{prop}\label{threadcon} If $\langle \Phi_{\alpha} : \alpha \leq \delta \rangle$ is a Scott process with $\Phi_{\delta}$ countable, then there exists a thread through $\Phi_{\delta}$.
\end{prop}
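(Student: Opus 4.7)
The plan is to construct the thread $\{\phi_n : n < \omega\}$ by a Henkin-style recursion with countable bookkeeping. Since $\Phi_\delta$ is countable, condition (\ref{vfive}) of Definition \ref{prodef} makes every $\Phi_\alpha$ for $\alpha \leq \delta$ countable as well, and the set of pairs $(m, \psi^*)$ with $m \in \omega$ and $\psi^* \in \Phi^{m+1}_\delta$ is therefore countable; I would enumerate it as $\{(m_s, \psi^*_s) : s < \omega\}$. By Proposition \ref{succcase}, each $\psi \in E(V_{\alpha+1, \delta}(\phi_m))$ appearing in condition (\ref{goodlift}) of Definition \ref{threaddef} is of the form $V_{\alpha, \delta}(\psi^*)$ for some $\psi^*$ extending $\phi_m$, and Proposition \ref{proppartone} then transfers any strong ($\delta$-level) realization of $\psi^*$ to a realization of $\psi$ at every $\alpha < \delta$. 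So it will be enough to realize each active $(m, \psi^*)$ from the enumeration.

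Starting from the unique $\phi_0 \in \Phi^0_\delta$ given by Proposition \ref{usent}, I would extend the thread one pair at a time. Given a coherent segment $\phi_0, \ldots, \phi_N$ satisfying the first two clauses of Definition \ref{threaddef}, process the next pair $(m, \psi^*)$: if $m > N$ or $H^{m+1}_\delta(\psi^*, i_m) \neq \phi_m$, extend trivially by one step using condition (\ref{htwo}) of Definition \ref{prodef}. Otherwise, apply the joint-extension condition (\ref{combine}) of Definition \ref{prodef} to $\phi_N$ and $\psi^*$ to obtain $\theta \in \Phi^{N+m+1}_\delta$ and an injection $j \in \cF_{m+1, N+m+1}$ with $\phi_N = H^{N+m+1}_\delta(\theta, i_N)$ and $\psi^* = H^{N+m+1}_\delta(\theta, j)$; then set $\phi_{N+k} = H^{N+m+1}_\delta(\theta, i_{N+k})$ for $1 \leq k \leq m+1$, so that $\phi_{N+m+1} = \theta$. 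Coherence with every earlier $\phi_k$ follows from part (\ref{hcompose}) of Remark \ref{tworems}.

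The principal obstacle will be reconciling the arbitrary injection $j$ returned by (\ref{combine}) with the specific form $i_m \cup \{(x_m, y)\}$ required by condition (\ref{goodlift}). Although both $i_m$ and $j \circ i_m$ yield $\phi_m$ when composed with $H^{N+m+1}_\delta(\theta, \cdot)$, so that the sub-tuples $X_m$ and $j(X_m)$ carry the same Scott formula inside $\theta$, nothing in (\ref{combine}) forces $j \restrict X_m$ to equal $i_m$. My plan for closing this gap is to pass to a variable-permuted version $H^{N+m+1}_\delta(\theta, \sigma)$ for a permutation $\sigma \in \cF_{N+m+1, N+m+1}$ sending $j \restrict X_m$ to $i_m$, justified by condition (\ref{htwozero}), and then either iterate the amalgamation step to absorb the distortion induced in the coherence equation $\phi_N = H^{N+m+1}_\delta(\theta, i_N)$, or retreat to the weaker $V_{\alpha, \delta}$-projected form of condition (\ref{goodlift}) that Propositions \ref{succcase} and \ref{proppartone} show is already sufficient to yield a thread.
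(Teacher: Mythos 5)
There is a genuine gap, and it is located exactly where you flagged ``the principal obstacle.'' Your reduction asks for a \emph{strong} ($\delta$-level) realization of every active $\psi^{*} \in \Phi^{m+1}_{\delta}$ with $H^{m+1}_{\delta}(\psi^{*}, i_{m}) = \phi_{m}$, i.e.\ for some $n$ and $y$ with $H^{n}_{\delta}(\phi_{n}, i_{m} \cup \{(x_{m},y)\}) = \psi^{*}$ exactly. Unwinding this (replace $y = x_{q}$ by $x_{N}$ using Remark \ref{jandi}), it is equivalent to finding $\theta \in \Phi^{N+1}_{\delta}$ with $H^{N+1}_{\delta}(\theta, i_{N}) = \phi_{N}$ and $H^{N+1}_{\delta}(\theta, i_{m} \cup \{(x_{m}, x_{N})\}) = \psi^{*}$ --- that is, to amalgamating $\psi^{*}$ with the current top formula $\phi_{N}$ \emph{over the common subtuple} $\phi_{m}$. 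This is precisely the amalgamation property of Definition \ref{unidef}, which is \emph{not} a hypothesis of Proposition \ref{threadcon}. Condition (\ref{combine}) of Definition \ref{prodef} only gives free joint embedding: the injection $j$ it returns places $\psi^{*}$ on an uncontrolled subtuple of $\theta$, and although $X_{m}$ and $j[X_{m}]$ both carry $\phi_{m}$ inside $\theta$, nothing identifies them. Post-composing with a permutation that moves $j[X_{m}]$ onto $X_{m}$ destroys the coherence equation $\phi_{N} = H^{N+m+1}_{\delta}(\theta, i_{N})$, and iterating free amalgamation does not converge to an aligned one; if it did, every countable level would amalgamate, contradicting the role that hypothesis plays in Section \ref{arbcardsec} (and Proposition \ref{canmax}). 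So the target you set yourself is in general unattainable, and your fallback (``retreat to the $V_{\alpha,\delta}$-projected form'') names the right weaker goal but supplies no mechanism for achieving it, since the only tool you invoke is still condition (\ref{combine}) at level $\delta$.

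The paper avoids this by never trying to realize a level-$\delta$ formula over a prescribed subtuple. It meets condition (\ref{goodlift}) one triple $(m, \alpha, \psi)$ at a time, with $\psi \in E(V_{\alpha+1,\delta}(\phi_{m}))$ living one level \emph{below} $\alpha + 1$: condition (\ref{projfact}) of Definition \ref{hdef}, applied to $V_{\alpha+1,\delta}(\phi_{N})$ and $i_{m}$, says that $E(V_{\alpha+1,\delta}(\phi_{m}))$ is the image of $E(V_{\alpha+1,\delta}(\phi_{N}))$ under the maps $H^{N+1}_{\alpha}(\cdot\,, i_{m} \cup \{(x_{m},y)\})$ --- an ``amalgamation with a one-level drop'' that is automatic for any Scott process --- so some $\theta \in E(V_{\alpha+1,\delta}(\phi_{N}))$ projects onto $\psi$, and condition (\ref{ppath}) then lifts $\theta$ to a $\phi_{N+1} \in \Phi^{N+1}_{\delta}$ extending $\phi_{N}$. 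The point is that one gets to \emph{choose}, compatibly with $\phi_{N}$, which level-$\delta$ formula above $\psi$ to realize, rather than committing to $\psi^{*}$ in advance; your bookkeeping over pairs $(m,\psi^{*})$ forecloses exactly that freedom. (Two smaller remarks: the bookkeeping must then range over triples with $\alpha$ in a cofinal subset of $\delta$, which is countable only after reducing, via Remark \ref{cofrestr}, to the case where $\delta$ is a successor or has countable cofinality; and your opening observations --- countability of all levels via condition (\ref{vfive}), and the use of Propositions \ref{succcase} and \ref{proppartone} to pass between levels --- are correct and reusable.)
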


\begin{proof}
  By Remark \ref{cofrestr}, if suffices to consider the cases where $\delta$ is either a successor ordinal or an ordinal of cofinality $\omega$. Let $A$ be $\{\delta - 1\}$ in the case where $\delta$ is a successor ordinal, and a countable cofinal subset of $\delta$ otherwise.
  We choose the formulas  $\phi_{n}$ recursively, meeting instances of condition (\ref{goodlift}) of Definition \ref{threaddef} for $\alpha \in A$ while satisfying condition (\ref{allin}). Note that
  $\phi_{0}$ is the unique element of $\Phi^{0}_{\delta}$. To satisfy an instance of condition (\ref{goodlift}), we need to see that if $m \leq n$ are in $\omega$, $\alpha \in A$, $\phi_{n}$ has been chosen, and $\psi \in E(V_{\alpha+1,\beta}(\phi_{m}))$ is not equal to $V_{\alpha, \beta}(H^{n}_{\beta}(\phi_{n}, i_{m} \cup \{(x_{m}, y)\}))$ for any $y \in X_{n} \setminus X_{m}$, then
  $\phi_{n+1}$ can be chosen so that $$\psi = V_{\alpha, \beta}(H^{n+1}_{\beta}(\phi_{n+1}, i_{m} \cup \{(x_{m}, x_{n})\}))$$ (since $\Phi_{\delta}$ is countable, the set of such formulas $\psi$ is also countable). The existence of such a $\phi_{n+1}$ follows from condition (\ref{projfact}) of Definition \ref{hdef} applied to $V_{\alpha + 1, \beta}(\phi_{n})$ and $i_{m}$, giving a $\theta \in E(V_{\alpha + 1, \beta}(\phi_{n}))$ such that $$H^{n+1}_{\alpha}(\theta, i_{m} \cup \{(x_{m}, x_{n})\}) = \psi,$$ followed by condition (\ref{ppath}) of Definition \ref{prodef} applied to $\phi_{n}$, giving $\phi_{n+1}$ as desired.
\end{proof}

\begin{thrm}\label{existmodel} Given a Scott process $\langle \Phi_{\alpha} : \alpha \leq \delta \rangle$ with $\Phi_{\delta}$ countable, a thread $\langle \phi_{n} : n \in \omega \rangle$ through $\Phi_{\delta}$ and a set $C = \{ c_{n} : n \in \omega \}$, there is a $\tau$-structure with domain $C$ in which each tuple $\langle c_{m} : m < n\rangle$ satisfies $\phi_{n}$.
\end{thrm}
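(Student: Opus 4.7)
The plan is to define $M$ directly from the atomic-level information in the thread, and then verify, by induction on $\alpha \leq \delta$, that each tuple $\langle c_{0},\ldots,c_{n-1}\rangle$ satisfies $V_{\alpha,\delta}(\phi_{n})$ in $M$. Since $V_{0,\delta}(\phi_{n}) \in \Psi^{n}_{0}$ is a conjunction of literals on $X_{n}$ together with the conjuncts $x_{i} \neq x_{j}$ for distinct $i,j < n$, any satisfying tuple must consist of distinct elements, and I may assume without loss of generality that the $c_{n}$ are distinct. Thread condition (\ref{allin}) together with Proposition \ref{proppartone} gives
$$V_{0,\delta}(\phi_{m}) = V_{0,\delta}(H^{n}_{\delta}(\phi_{n}, i_{m})) = H^{n}_{0}(V_{0,\delta}(\phi_{n}), i_{m})$$
for $m < n$, so the literals appearing across the various $V_{0,\delta}(\phi_{n})$ cohere. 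I then define $M$ on $C$ by declaring $R^{M}(c_{f(0)},\ldots,c_{f(k-1)})$ to hold exactly when $R(x_{f(0)},\ldots,x_{f(k-1)})$ appears as a conjunct of $V_{0,\delta}(\phi_{n})$ for some (equivalently, any) $n > \max\range(f)$. This makes the base case $\alpha = 0$ of the induction immediate.

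For a limit ordinal $\alpha$, Definition \ref{psidef}(\ref{pathstage}) presents $V_{\alpha,\delta}(\phi_{n})$ as the conjunction of the formulas $V_{\beta,\delta}(\phi_{n})$ for $\beta < \alpha$, each satisfied by the inductive hypothesis, so the work is concentrated in the successor step. Throughout the successor case I will repeatedly need the semantic observation that the paper leaves to the reader after Definition \ref{hdef}: if $\bar{b} = \langle b_{0},\ldots,b_{n'-1}\rangle$ satisfies $\phi \in \Psi^{n'}_{\alpha}$ in a structure and $j \in \cF_{m,n'}$ is the injection $x_{\ell} \mapsto x_{j^{*}(\ell)}$, then $\langle b_{j^{*}(0)},\ldots,b_{j^{*}(m-1)}\rangle$ satisfies $H^{n'}_{\alpha}(\phi, j)$. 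This is provable by a separate induction on $\alpha$ and is the principal bookkeeping fact underlying the entire argument.

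In the successor step from $\alpha$ to $\alpha+1$, the formula $V_{\alpha+1,\delta}(\phi_{n})$ adds two conjuncts beyond $V_{\alpha,\delta}(\phi_{n})$: the existential conjunct $\bigwedge_{\psi \in E(V_{\alpha+1,\delta}(\phi_{n}))} \exists x_{n}\,\psi$, and the corresponding universal conjunct. For a given $\psi \in E(V_{\alpha+1,\delta}(\phi_{n}))$, thread condition (\ref{goodlift}) supplies some $n' > n$ and some $x_{p} \in X_{n'} \setminus X_{n}$ with $\psi = V_{\alpha,\delta}(H^{n'}_{\delta}(\phi_{n'}, i_{n} \cup \{(x_{n}, x_{p})\}))$; applying the inductive hypothesis to $\phi_{n'}$, together with the semantic observation and Proposition \ref{proppartone}, then shows that $\langle c_{0},\ldots,c_{n-1},c_{p}\rangle$ satisfies $\psi$, furnishing the required witness. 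For the universal conjunct, given $c \in C \setminus \{c_{0},\ldots,c_{n-1}\}$ I write $c = c_{p}$ with $p \geq n$, apply the inductive hypothesis to $\phi_{p+1}$, and set $j = i_{n} \cup \{(x_{n}, x_{p})\} \in \cF_{n+1, p+1}$; the semantic observation yields that $\langle c_{0},\ldots,c_{n-1}, c\rangle$ satisfies $V_{\alpha,\delta}(H^{p+1}_{\delta}(\phi_{p+1}, j))$. To place this formula inside $E(V_{\alpha+1,\delta}(\phi_{n}))$, I invoke Proposition \ref{succcase} at $\beta = \delta$: by Remark \ref{tworems}(\ref{hcompose}) and thread condition (\ref{allin}),
$$H^{n+1}_{\delta}(H^{p+1}_{\delta}(\phi_{p+1}, j), i_{n}) = H^{p+1}_{\delta}(\phi_{p+1}, j \circ i_{n}) = H^{p+1}_{\delta}(\phi_{p+1}, i_{n}) = \phi_{n},$$
whence $V_{\alpha,\delta}(H^{p+1}_{\delta}(\phi_{p+1}, j)) \in E(V_{\alpha+1,\delta}(\phi_{n}))$, and the required disjunction is satisfied. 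This closes the induction at $\alpha = \delta$, yielding satisfaction of $\phi_{n} = V_{\delta,\delta}(\phi_{n})$ by each tuple $\langle c_{0},\ldots,c_{n-1}\rangle$.
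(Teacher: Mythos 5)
Your proposal is correct and follows essentially the same route as the paper: define the structure from the atomic data in $V_{0,\delta}(\phi_{n})$, induct on $\alpha$, use thread condition (\ref{goodlift}) to supply existential witnesses, and use Proposition \ref{succcase} together with $\phi_{n} = H^{p}_{\delta}(\phi_{p}, i_{n})$ to handle the universal conjunct. The only difference is presentational --- you verify the two new conjuncts of $V_{\alpha+1,\delta}(\phi_{n})$ separately and make explicit the semantic lifting fact the paper leaves to the reader, whereas the paper packages both directions as a single set equality --- and the content is the same.
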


\begin{proof} Let each tuple $\langle c_{m} : m < n\rangle$ satisfy all the atomic formulas indicated by $V_{0,\delta}(\phi_{n})$. We show by induction on $\alpha$ that each tuple $\langle c_{m} : m < n \rangle$ satisfies the formula $V_{\alpha, \delta}(\phi_{n})$. This follows immediately for limit stages. For the induction step from $\alpha$ to $\alpha + 1$, $\langle c_{m} : m < n \rangle$ satisfies $V_{\alpha + 1, \delta}(\phi_{n})$ if and only if $$E(V_{\alpha + 1, \delta}(\phi_{n})) = V_{\alpha, \delta}[\{ H^{p}_{\delta}(\phi_{p}, i_{n} \cup \{ (x_{n}, y)\}) : p \in \omega \setminus (n+1), y \in X_{p} \setminus X_{n}\}].$$
That is, checking that $\langle c_{m} : m < n \rangle$
satisfies $V_{\alpha+1,\delta}(\phi_{n})$ means showing that the left side of the
equality is the set of formulas from $\Phi^{n+1}_{\alpha}$ satisfied by extensions of
$\langle c_{m} : m < n \rangle$ by one point, which by the induction hypothesis is what the right side
is.
The left-to-right containment follows from condition (\ref{goodlift}) of Definition \ref{threaddef}. For the other direction, note first that by Proposition \ref{succcase},
$$E(V_{\alpha + 1, \delta}(\phi_{n})) =  V_{\alpha, \delta}[\{\theta \in \Phi^{n+1}_{\delta} \mid H^{n+1}_{\delta}(\theta, i_{n}) = \phi_{n}\}].$$
That $$\{ H^{p}_{\delta}(\phi_{p}, i_{n} \cup \{ (x_{n}, y)\}) : p \in \omega \setminus (n+1), y \in X_{p} \setminus X_{n}\}$$ is contained in  $\{\theta \in \Phi^{n+1}_{\delta} \mid H^{n+1}_{\delta}(\theta, i_{n}) = \phi_{n}\}$ follows from the assumption that $\phi_{n} = H^{p}_{\delta}(\phi_{p}, i_{n})$.
\end{proof}


\begin{df}\label{cthreaddef}
  Given an ordinal $\beta$, and a countable set $\Phi \subseteq \Psi_{\beta}$, a thread  $\{ \phi_{n} : n \in \omega \}$ through $\Phi$ is \emph{complete} if for all $m \in \omega$ and all $\psi \in \Phi \cap \Psi^{m}_{\beta}$, there exist $n \in \omega$ and $j \in \cF_{m,n}$ such that $\psi = H^{n}_{\beta}(\phi_{n}, j)$.
\end{df}

\begin{remark}\label{cthreadrem}
  The thread through $\Phi_{\delta}$ given by Proposition \ref{threadcon} induces a model (via Theorem \ref{existmodel}) whose Scott process contains $\langle \Phi_{\alpha} : \alpha < \delta\rangle$, and for which the $\delta$-th level of the corresponding Scott process is contained in the given $\Phi_{\delta}$. The $\delta$-th level is equal to $\Phi_{\delta}$ if and only if the thread is complete.
  Condition (\ref{combine}) of Definition \ref{prodef} implies that one can add stages to the construction in Proposition \ref{threadcon} to produce a complete thread.
\end{remark}

\begin{df} A $\tau$-structure $M$ is a \emph{model} of a Scott process $\langle \Phi_{\alpha} : \alpha < \delta \rangle$ if $\Phi_{\alpha} = \Phi_{\alpha}(M)$ for all $\alpha <  \delta$.
\end{df}

Proposition \ref{threadcon}, Theorem \ref{existmodel} and Remark \ref{cthreadrem} give the following.

\begin{thrm}\label{ctblehasmodel}
Every Scott process $\langle \Phi_{\alpha} : \alpha \leq \delta \rangle$ with $\Phi_{\delta}$ countable has a countable model.
\end{thrm}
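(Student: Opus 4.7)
The plan is to assemble Theorem \ref{ctblehasmodel} directly from the three ingredients cited just before its statement: Proposition \ref{threadcon}, Theorem \ref{existmodel}, and Remark \ref{cthreadrem}. First I would invoke Proposition \ref{threadcon} to produce a thread $\langle \phi_n : n \in \omega \rangle$ through $\Phi_\delta$. The construction there proceeds recursively, at each stage satisfying an instance of clause (\ref{goodlift}) of Definition \ref{threaddef} for some $\alpha$ in a chosen cofinal set $A \subseteq \delta$. Since $\Phi_\delta$ is countable, a standard bookkeeping argument lets me interleave additional stages that also address completeness in the sense of Definition \ref{cthreaddef}: namely, for each $\psi \in \Phi_\delta \cap \Psi^m_\delta$, I want to arrange some $n \geq m$ and $j \in \cF_{m,n}$ with $\psi = H^n_\delta(\phi_n, j)$.

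The hooks for this completeness bookkeeping are already available. Given the current $\phi_n$ and a target $\psi \in \Phi^m_\delta$, condition (\ref{combine}) of Definition \ref{prodef} applied to $\phi_n$ and $\psi$ yields a joint extension $\theta \in \Phi^{n+m}_\delta$ such that $\phi_n = H^{n+m}_\delta(\theta, i_n)$ and $\psi = H^{n+m}_\delta(\theta, j)$ for some $j \in \cF_{m, n+m}$. Iterating the one-step successor lift used in Proposition \ref{threadcon} (which in turn rests on condition (\ref{ppath}) of Definition \ref{prodef}) finitely many times from $\phi_n$ up to level $n+m$, I can ensure $\phi_{n+m} = \theta$, thereby witnessing the completeness clause for $\psi$. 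Queuing up the countably many such $\psi$ together with the countably many clause-(\ref{goodlift}) obligations into a single $\omega$-length recursion produces a complete thread.

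Next I would apply Theorem \ref{existmodel} with this complete thread and any countable set $C = \{c_n : n \in \omega\}$. This yields a $\tau$-structure $M$ with domain $C$ in which the tuple $\langle c_m : m < n \rangle$ satisfies $\phi_n$ for every $n$. By Remark \ref{cthreadrem}, because the thread is complete, the $\delta$-th level of the Scott process of $M$ equals $\Phi_\delta$, and the earlier levels agree with $\Phi_\alpha$ for $\alpha < \delta$ via the vertical projection $V_{\alpha, \delta}$, using condition (\ref{vfive}) of Definition \ref{prodef} to conclude that $\Phi_\alpha(M) = V_{\alpha, \delta}[\Phi_\delta(M)] = V_{\alpha, \delta}[\Phi_\delta] = \Phi_\alpha$. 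Hence $M$ is a countable model of $\langle \Phi_\alpha : \alpha \leq \delta \rangle$ in the sense of the definition just preceding the theorem.

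The main obstacle, and really the only nontrivial point beyond quoting the prior results, is verifying that the interleaved recursion producing a complete thread actually terminates each obligation after finitely many steps; this comes down to the fact that condition (\ref{combine}) gives a single witnessing $\theta$ at a bounded finite level and that the one-step successor extension used in the proof of Proposition \ref{threadcon} allows the thread to be forced to pass through any prescribed element of $\Phi^{n}_\delta$. Once that bookkeeping is in place, everything else is an immediate combination of the cited results.
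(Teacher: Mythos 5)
Your proposal is correct and follows exactly the paper's intended argument: the theorem is obtained by combining Proposition \ref{threadcon}, Theorem \ref{existmodel}, and Remark \ref{cthreadrem}, with the completeness bookkeeping via condition (\ref{combine}) of Definition \ref{prodef} being precisely what Remark \ref{cthreadrem} alludes to. The extra detail you supply about steering the thread through the amalgamating formula $\theta$ is a faithful elaboration of that remark rather than a departure from it.
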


\begin{remark}
  Theorem \ref{limitext} gives a stronger version of
  Theorem \ref{ctblehasmodel}, showing that every Scott process with all
  levels countable (and possibly of limit length) has a model.
\end{remark}


\begin{remark} If $\langle \Phi_{\alpha} : \alpha \leq \delta \rangle$ is a Scott process, $\gamma < \delta$ and $\{ \phi_{n} : n \in \omega \}$ is a thread through $\Phi_{\delta}$, then $\{ V_{\gamma, \delta}(\phi_{n}) : n \in \omega \}$ is a thread through $\Phi_{\gamma}$ (this follows from Proposition \ref{proppartone}). This thread induces (as in the proof of Theorem \ref{existmodel}) the same class-length Scott process as $\{\phi_{n} : n \in \omega \}$.
\end{remark}

We insert here an argument for constructing a pair of models.
The issue of extending this theorem to uncountable models is discussed in Remark \ref{addthisrem}.

\begin{thrm}\label{addthis}
  Let $\gamma$ be a countable ordinal, and suppose that $\langle \Phi_{\beta} : \beta \leq \gamma \rangle$ is a Scott process with $\Phi_{\gamma}$ countable.
  Let $\Phi^{*}$ be a subset of $\Phi_{\gamma}$ such that the extension of $\langle \Phi_{\beta} : \beta \leq\gamma \rangle$ by $\Phi^{*}$ is also a Scott process. Then there exists $\tau$-structures $M$ and $N$ such that $M$ is a substructure of $N$, $N$ is a model of $\langle \Phi_{\beta} : \beta \leq \gamma \rangle$
  and $M$ is a model of the extension of $\langle \Phi_{\beta} : < \gamma \rangle$ by $\Phi^{*}$.
\end{thrm}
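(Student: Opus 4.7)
The plan is a two-stage construction using the thread machinery of Section~\ref{threadsec}. For the first stage, apply Theorem~\ref{ctblehasmodel} to the Scott process obtained by extending $\langle \Phi_\beta : \beta < \gamma\rangle$ by $\Phi^*$ (a Scott process by hypothesis, with countable top level $\Phi^*$ since $\Phi^* \subseteq \Phi_\gamma$ is countable). This produces a countable $\tau$-structure $M$ with an enumeration $C_M = \{c_n : n \in \omega\}$ of its domain; setting $\phi_n := \phi^M_{\langle c_0,\ldots,c_{n-1}\rangle, \gamma}$, the sequence $\{\phi_n : n \in \omega\}$ is a complete thread through $\Phi^*$ describing $M$.

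For the second stage, fix a disjoint countable set $D = \{d_n : n \in \omega\}$, put $C_N := C_M \cup D$, and enumerate $C_N = \{e_n : n \in \omega\}$ alternately as $e_{2n} := c_n$ and $e_{2n+1} := d_n$. For each $n \in \omega$, let $k(n)$ be the number of $M$-positions (i.e., even indices) in $\{0,\ldots,n-1\}$, and let $j^{(n)} \in \cF_{k(n), n}$ be the injection sending $x_i$ to $x_{2i}$. I would construct by recursion on $n$ a complete thread $\{\theta_n : n \in \omega\}$ through $\Phi_\gamma$ maintaining the invariant $H^n_\gamma(\theta_n, j^{(n)}) = \phi_{k(n)}$. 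Passing $\{\theta_n\}$ through Theorem~\ref{existmodel} yields a countable $\tau$-structure $N$ on $C_N$ modeling $\langle \Phi_\beta : \beta \leq \gamma\rangle$. Applying $V_{0,\gamma}$ to the invariant shows that the atomic type in $N$ of any initial tuple from $C_M$ agrees with its atomic type in $M$, so $M$ embeds as a substructure of $N$.

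The principal obstacle is the recursive construction of $\theta_n$: at each stage one must find $\theta_n \in \Phi^n_\gamma$ simultaneously extending $\theta_{n-1}$ via $i_{n-1}$, projecting onto $\phi_{k(n)}$ via $j^{(n)}$, and discharging the pending goodlift and completeness requirements from Definitions~\ref{threaddef} and~\ref{cthreaddef}. The two restriction constraints on $\theta_n$ agree on their common overlap by the inductive hypothesis --- both project further (under $j^{(n-1)}$) to $\phi_{k(n-1)}$, using the thread properties of $\{\phi_n\}$ and $\{\theta_n\}$ together with the fact that $j^{(n)}$ extends $j^{(n-1)}$ by at most one new coordinate. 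Gluing these compatible constraints to produce $\theta_n$ is an amalgamation problem for $\Phi_\gamma$ of the kind handled by Proposition~\ref{amalgamation}; the requisite amalgamation property of $\Phi_\gamma$ will need to be established as part of the argument. Goodlift requirements whose witness must lie in $\Phi^*$ are already built into $M$'s thread $\{\phi_n\}$ and hence compatible with the invariant, while requirements whose witness lies in $\Phi_\gamma \setminus \Phi^*$ are deferred to a subsequent $D$-position $e_{2n+1}$, at which the projection invariant is vacuous. The usual bookkeeping as in Proposition~\ref{threadcon} interleaves all countably many obligations.
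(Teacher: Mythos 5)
Your two-stage plan has a genuine gap at exactly the point you flag: the recursive step at an $M$-position requires finding $\theta_{n+1}\in\Phi^{n+1}_{\gamma}$ with $H^{n+1}_{\gamma}(\theta_{n+1},i_{n})=\theta_{n}$ and $H^{n+1}_{\gamma}(\theta_{n+1},j^{(n)}\cup\{(x_{k(n)},x_{n})\})=\phi_{k(n)+1}$, i.e., an amalgamation of the already-fixed formula $\phi_{k(n)+1}$ with $\theta_{n}$ over their common restriction $\phi_{k(n)}$. This is precisely the amalgamation property of Definition \ref{unidef}, and it is \emph{not} a consequence of the Scott process axioms, nor of the hypothesis that the extension by $\Phi^{*}$ is a Scott process; by Proposition \ref{canmax} it is equivalent to the maximal completion of $\Phi_{\gamma}$ inducing a Scott process, which fails for general countable last levels (it is assumed as an extra hypothesis throughout Section \ref{arbcardsec} for exactly this reason). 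So "the requisite amalgamation property of $\Phi_{\gamma}$ will need to be established as part of the argument" cannot be carried out: the conditions of Definition \ref{prodef} give you, via conditions (\ref{projfact}) and (\ref{ppath}), that any $\psi\in E(V_{\alpha+1,\gamma}(\phi_{k(n)}))$ is realized by \emph{some} level-$\gamma$ one-point extension of $\theta_{n}$ for each $\alpha<\gamma$ separately, but not that the single pre-chosen level-$\gamma$ formula $\phi_{k(n)+1}$ extends over the larger tuple. Condition (\ref{combine}) is only a joint-embedding statement, not amalgamation over a common subtuple.

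The source of the trouble is committing to $M$ (its complete thread through $\Phi^{*}$) and to the interleaving pattern before building $N$. The paper's proof avoids this by running a single adaptive construction: it builds one thread $\langle\phi_{n}:n\in\omega\rangle$ through $\Phi_{\gamma}$ together with an infinite set $Y\subseteq\omega$, chosen as the construction proceeds, so that the formulas $H^{n}_{\gamma}(\phi_{k_{n}},j_{n})$ read off along $Y$ form a complete thread through $\Phi^{*}$. Because the $M$-formulas are never fixed in advance --- they are whatever the restriction of the $N$-thread turns out to be, with new indices placed into $Y$ exactly when a witness for a $\Phi^{*}$-obligation is realized --- every step reduces to choosing the \emph{next} formula via conditions (\ref{projfact}) of Definition \ref{hdef} and (\ref{ppath}) of Definition \ref{prodef}, and no amalgamation of two independently chosen level-$\gamma$ formulas ever arises. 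To repair your argument you would need to abandon the first stage and the fixed even/odd pattern and fold the construction of $M$'s thread into the construction of $N$'s, which is essentially the paper's proof.
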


\begin{proof}
  By Theorem \ref{existmodel}, it suffices to find a complete thread $\langle \phi_{n} : n \in \omega \rangle$ through $\Phi_{\gamma}$ and a infinite set $Y \subseteq \omega$ such that, letting (for each $n \in \omega$)
  \begin{itemize}
  \item $j_{n}$ be the order preserving map from $X_{n}$ to the first $n$ elements of the set $\{x_{m} : m \in Y\}$,
  \item $k_{n}$ be the least element of $\omega$ such that $|Y \cap k_{n}| = n$,
  \end{itemize}
  $\langle H^{n}_{\gamma}(\phi_{k_{n}}, j_{n}) : n \in \omega\rangle$ is a complete thread through $\Phi^{*}$.

  A construction of such a pair $\langle \phi_{n} : n \in \omega \rangle$, $Y$ can be carried out in essentially the same manner as the proof of Theorem \ref{threadcon} (as above, completeness can be achieved using Condition (\ref{combine}) of Definition \ref{prodef}). The construction of $\langle \phi_{n} : n \in \omega \rangle$ is exactly the same, except with stages inserted to ensure that the formulas $H^{n}_{\gamma}(\phi_{k_{n}}, j_{n})$ are as desired. That is, if $k_{m} \leq n$ (as determined so far), $\alpha \in A$ (as in the proof of Theorem \ref{threadcon}), $\phi_{n}$ has been chosen, and $\psi \in E(V_{\alpha+1,\beta}(H^{n}_{\gamma}(\phi_{k_{m}}, j_{m}))$ is not equal to $V_{\alpha, \beta}(H^{n}_{\beta}(\phi_{n}, j_{m} \cup \{(x_{m}, y)\}))$ for any $x_{p} \in X_{n} \setminus X_{m}$ with $p$ already chosen to be in $Y$, then
  $\phi_{n+1}$ can be chosen so that $$\psi = V_{\alpha, \beta}(H^{n+1}_{\beta}(\phi_{n+1}, j_{m} \cup \{(x_{m}, x_{n})\})).$$ As in the proof of Theorem \ref{threadcon}, the existence of such a $\phi_{n+1}$ follows from condition (\ref{projfact}) of Definition \ref{hdef} applied to $V_{\alpha + 1, \beta}(\phi_{n})$ and $j_{m}$, giving a $\theta \in E(V_{\alpha + 1, \beta}(\phi_{n}))$ such that $$H^{n+1}_{\alpha}(\theta, j_{m} \cup \{(x_{m}, x_{n})\}) = \psi,$$ followed by condition (\ref{ppath}) of Definition \ref{prodef} applied to $\phi_{n}$, giving $\phi_{n+1}$ as desired. We then put $n$ into $Y$ and continue the construction.
\end{proof}

\section{Models of cardinality $\aleph_{1}$}\label{arbcardsec}

In this section we show how to build models for Scott processes of length a successor ordinal, under the assumption that the last level of the process amalgamates and has cardinality at most $\aleph_{1}$.

Given two finite sets of ordinals $a \subseteq b$ with $a = \{\alpha_{0},\ldots,\alpha_{n-1}\}$ (listed in increasing order), let $j_{a,b}$ be the function $j$ in
$\cI_{n,|b|}$ such that $j(x_{m}) = x_{|b \cap \alpha_{m}|}$ for all $m < n$.

In the case $\kappa = \omega$, the following definition is essentially the same as Definition \ref{threaddef}, as the formulas $\{ \phi_{n} : n \in \omega\}$ of the weaving then satisfy Definition \ref{threaddef}.

\begin{df}\label{threaddef2}
Suppose that $\delta$ is an ordinal, $\kappa$ is an infinite cardinal and $\Phi$ is a subset of $\Psi_{\delta}$ of cardinality $\kappa$.
 A \emph{weaving} through $\Phi$ is a set of formulas $\{ \phi_{a} : a \in [\kappa]^{\less\omega} \} \subseteq \Phi$ such that the following hold.
\begin{enumerate}
\item\label{basic} each $\phi_{a} \in \Psi^{|a|}_{\delta}$.
\item\label{allin2} For all $a \subseteq b \in [\kappa]^{\less\omega}$, $\phi_{a} = H^{|b|}_{\delta}(\phi_{b}, j_{a,b})$.
\item\label{goodlift2} for all $a \in [\kappa]^{\less\omega}$, all $\alpha < \delta$, and all $\psi \in E(V_{\alpha + 1, \delta}(\phi_{a}))$, there exist a $b \in [\kappa]^{|a|+1}$ containing $a$ and a $y \in X_{|b|} \setminus \range(j_{a,b})$ such that $$\psi = V_{\alpha, \delta}(H^{|a|+1}_{\delta}(\phi_{b}, j_{a,b} \cup \{(x_{|a|}, y)\})).$$
\end{enumerate}
\end{df}


The proof of Theorem \ref{existmodel2} is an adaptation of the proof of Theorem \ref{existmodel}.

\begin{thrm}\label{existmodel2} Given a Scott process $\langle \Phi_{\alpha} : \alpha \leq \delta \rangle$ with $\delta$ and $\Phi_{\delta}$ of cardinality $\kappa$, a weaving $\langle \phi_{a} : a \in [\kappa]^{\less\omega} \rangle$ through $\Phi_{\delta}$ and a set $C = \{ c_{\alpha} : \alpha < \kappa \}$, there is a $\tau$-structure with domain $C$ in which, for each $a \in [\kappa]^{\less\omega}$, the tuple $\langle c_{\alpha} : \alpha \in a\rangle$ satisfies $\phi_{a}$.
\end{thrm}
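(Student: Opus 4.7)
The plan is to adapt the proof of Theorem \ref{existmodel} almost verbatim, with $[\kappa]^{<\omega}$ playing the role that $\omega$ played for threads. First, define the atomic diagram on $C$ by declaring that for each $a \in [\kappa]^{<\omega}$, the tuple $\langle c_\alpha : \alpha \in a\rangle$ (listed in increasing order) satisfies precisely those atomic formulas picked out by $V_{0,\delta}(\phi_a)$. Well-definedness — that the atomic data specified by different $\phi_a$ agree on shared subtuples — follows from condition (\ref{allin2}) of Definition \ref{threaddef2} applied at level $0$, since $H^{|b|}_0$ simply strips out conjuncts whose variables lie outside the range of $j_{a,b}$ (with the appropriate renaming).

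Next, I would prove by induction on $\alpha \leq \delta$ that each $\langle c_\alpha : \alpha \in a\rangle$ satisfies $V_{\alpha,\delta}(\phi_a)$ in the resulting structure. The base case holds by construction, and the limit case is immediate from condition (\ref{pathcond}) of Definition \ref{vdef}, which makes $V_{\alpha,\delta}(\phi_a)$ the conjunction of the formulas covered by the induction hypothesis for $\beta < \alpha$.

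For the successor step from $\alpha$ to $\alpha+1$, showing that $\langle c_\alpha : \alpha \in a\rangle$ satisfies $V_{\alpha+1,\delta}(\phi_a)$ reduces, via the form of formulas in $\Psi^{|a|}_{\alpha+1}$, to showing that $E(V_{\alpha+1,\delta}(\phi_a))$ is precisely the set of formulas in $\Phi^{|a|+1}_\alpha$ realized by one-point extensions $\langle c_\alpha : \alpha \in a\rangle{}^\frown\langle c_\beta\rangle$ for $\beta \in \kappa\setminus a$. For left-to-right, given $\psi \in E(V_{\alpha+1,\delta}(\phi_a))$, condition (\ref{goodlift2}) provides $b \in [\kappa]^{|a|+1}$ with $a \subseteq b$ and a variable $y \in X_{|b|}\setminus\range(j_{a,b})$ witnessing $\psi = V_{\alpha,\delta}(H^{|a|+1}_\delta(\phi_b, j_{a,b} \cup \{(x_{|a|}, y)\}))$; the induction hypothesis applied at $b$, together with the fact that $H$ at level $0$ governs the atomic reduct and (via its definition) the $E$-sets at higher levels, gives that the unique element $\beta$ of $b\setminus a$ supplies the required witness. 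For the reverse inclusion, use Proposition \ref{succcase} to rewrite $E(V_{\alpha+1,\delta}(\phi_a))$ as $V_{\alpha,\delta}[\{\theta \in \Phi^{|a|+1}_\delta \mid H^{|a|+1}_\delta(\theta,i_{|a|}) = \phi_a\}]$, and observe that for any $\beta \in \kappa \setminus a$, setting $b = a\cup\{\beta\}$ and applying condition (\ref{allin2}) together with the induction hypothesis at $b$ shows that the formula realized by the one-point extension lies in this set.

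The main bookkeeping obstacle is that inserting $\beta \in \kappa\setminus a$ into the increasing enumeration of $a$ generally places $c_\beta$ in the middle of the resulting tuple rather than at the end, so that the one-point extension of $\langle c_\alpha :\alpha\in a\rangle$ by $c_\beta$ is not literally the tuple enumerating $b$ but rather a permutation of it. This is handled uniformly by the injections $j_{a,b}$ (together with the extra pair $(x_{|a|},y)$ placing the new element), which are exactly the horizontal projection data needed to translate between the two orderings; once this convention is fixed, the argument is a routine transcription of the proof of Theorem \ref{existmodel}.
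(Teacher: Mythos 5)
Your proposal is correct and takes essentially the same route as the paper's proof: read the atomic diagram off from $V_{0,\delta}(\phi_{a})$, induct on levels, and at successor stages use condition (\ref{goodlift2}) of Definition \ref{threaddef2} for one inclusion and Proposition \ref{succcase} together with condition (\ref{allin2}) for the other. The reindexing issue you flag is handled in the paper by exactly the same device, namely the maps $j_{a,b}$ augmented by the pair $(x_{|a|},y)$.
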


\begin{proof} For each $a \in [\kappa]^{\less\omega}$, let the tuple $\langle c_{\alpha} : \alpha \in a\rangle$ satisfy all the atomic formulas indicated by $V_{0,\delta}(\phi_{a})$. We show by induction on $\beta < \delta$ that each tuple $\langle c_{\alpha} : \alpha \in a \rangle$ satisfies the formula $V_{\beta, \delta}(\phi_{a})$. This follows immediately for limit stages. For the induction step from $\beta$ to $\beta + 1$, $\langle c_{\alpha} : \alpha \in a \rangle$ satisfies $V_{\beta + 1, \delta}(\phi_{a})$ if and only if $E(V_{\beta + 1, \delta}(\phi_{a}))$ is equal to $$V_{\beta, \delta}[\{ H^{|b|}_{\delta}(\phi_{b}, j_{a,b} \cup \{ (x_{|a|}, y)\}) : a \subseteq b \in [\kappa]^{\less\omega}, y \in X_{|b|} \setminus \range(j_{a,b})\}].$$
The left-to-right containment follows from condition (\ref{goodlift2}) of Definition \ref{threaddef2}. For the other direction, note first that by Proposition \ref{succcase},
$$E(V_{\beta + 1, \delta}(\phi_{a})) =  V_{\beta, \delta}[\{\theta \in \Phi^{|a|+1}_{\delta} \mid H^{|a|+1}_{\delta}(\theta, i_{|a|}) = \phi_{a}\}].$$
That $$\{ H^{|b|}_{\delta}(\phi_{b}, j_{a,b} \cup \{ (x_{|a|}, y)\}) : a \subseteq b \in [\kappa]^{\less\omega}, y \in X_{|b|} \setminus \range(j_{a,b})\}$$ is contained in  $\{\theta \in \Phi^{|a|+1}_{\delta} \mid H^{|a|+1}_{\delta}(\theta, i_{|a|}) = \phi_{a}\}$ follows from the assumption that $\phi_{a} = H^{|b|}_{\delta}(\phi_{b}, j_{a,b})$.
\end{proof}

\begin{df}\label{cweavedef}
  Suppose that $\delta$ is an ordinal, $\kappa$ is an infinite cardinal and $\Phi$ is a subset of $\Psi_{\delta}$ of cardinality $\kappa$.
 A weaving $\{ \phi_{a} : a \in [\kappa]^{\less\omega} \} \subseteq \Phi$ through $\Phi$ is \emph{complete} if for all $n \in \omega$ and all $\psi \in \Phi \cap \Psi^{n}_{\delta}$, there exist $a \in [\kappa]^{n}$ and $j \in \cF_{n,n}$ such that  $\psi = H^{n}_{\delta}(\phi_{a}, j)$;
\end{df}

\begin{remark}\label{cweaverem}
As in Remark \ref{cthreadrem}, given a Scott process $\langle \Phi_{\alpha} : \alpha \leq \delta \rangle$ and a weaving through $\Phi_{\delta}$, the proof of
Theorem \ref{existmodel2} gives a model whose Scott process has $\langle \Phi_{\alpha} : \alpha < \delta\rangle$ as an initial segment, and for which the $\delta$-th level of its Scott process is contained in the given $\Phi_{\delta}$. The $\delta$-th level is equal to $\Phi_{\delta}$ if and only if the weaving is complete.
\end{remark}

It remains to find a complete weaving through a Scott process $\langle \Phi_{\alpha} : \alpha \leq \delta \rangle$, assuming that $\Phi_{\delta}$ amalgamates and has cardinality $\aleph_{1}$.


\begin{df}\label{sweavingdef}
Suppose that $\delta$ is an ordinal, $\kappa$ is an infinite cardinal and $\Phi$ is a subset of $\Psi_{\delta}$ of cardinality $\kappa$.
A \emph{strong weaving} through $\Phi$ is a set $$\{ \phi_{a} : a \in [\kappa]^{\less\omega} \} \subseteq \Phi$$ satisfying conditions (\ref{basic}) and (\ref{allin2}) of Definition \ref{threaddef2} plus the following condition:
for all $a \in [\kappa]^{\less\omega}$, and all $\psi \in \Phi \cap \Psi^{|a|+1}_{\delta}$ such that $H^{|a|+1}_{\delta}(\psi, i_{|a|}) = \phi_{a}$, there exist a $b \in [\kappa]^{|a|+1}$ containing $a$ and a $y \in X_{|b|} \setminus \range(j_{a,b})$ such that $$\psi = H^{|a|+1}_{\delta}(\phi_{b}, j_{a,b} \cup \{(x_{|a|}, y)\})).$$
\end{df}

\begin{remark} In condition (\ref{goodlift2}) of Definition \ref{threaddef2} and in Definition \ref{sweavingdef}, the variable $y$ is in fact the unique
member of $X_{|b|} \setminus \range(j_{a,b})$.
\end{remark}

\begin{prop}\label{betterpick}
Suppose that $\delta$ is an ordinal, $\kappa$ is an infinite cardinal and $\Phi$ is a subset of $\Psi_{\delta}$ of cardinality $\kappa$.
A strong weaving through $\Phi$ is both a weaving and complete.
\end{prop}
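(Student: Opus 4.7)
The plan is to verify the two assertions in turn, relying on the strong weaving one-point extension condition together with the machinery of Sections \ref{fpsec} and \ref{ccsec}. First I will show that a strong weaving satisfies condition (\ref{goodlift2}) of Definition \ref{threaddef2} (and is therefore a weaving); this will be an almost immediate reformulation via Proposition \ref{succcase}. Explicitly, let $a \in [\kappa]^{\less\omega}$, $\alpha < \delta$, and $\psi \in E(V_{\alpha + 1, \delta}(\phi_{a}))$ be given. Proposition \ref{succcase}, applied with $\beta = \delta$, provides some $\theta \in \Phi^{|a|+1}_{\delta}$ with $H^{|a|+1}_{\delta}(\theta, i_{|a|}) = \phi_{a}$ and $V_{\alpha,\delta}(\theta) = \psi$. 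Applying the strong weaving condition to $\theta$ produces $b \in [\kappa]^{|a|+1}$ containing $a$ and $y \in X_{|b|} \setminus \range(j_{a,b})$ with $\theta = H^{|a|+1}_{\delta}(\phi_{b}, j_{a,b} \cup \{(x_{|a|}, y)\})$, and applying $V_{\alpha, \delta}$ to both sides gives the required equation.

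For completeness I will induct on $n$. The base case $n = 0$ follows from Proposition \ref{usent}, which gives that $\Phi \cap \Psi^{0}_{\delta}$ is a singleton, and $\phi_{\emptyset}$ lies in it. For the inductive step, suppose $\psi \in \Phi \cap \Psi^{n+1}_{\delta}$, and set $\psi' = H^{n+1}_{\delta}(\psi, i_{n})$; by closure of $\Phi$ under horizontal projections $\psi' \in \Phi^{n}_{\delta}$, so the induction hypothesis supplies $a \in [\kappa]^{n}$ and a bijection $j \in \cF_{n,n}$ with $\psi' = H^{n}_{\delta}(\phi_{a}, j)$. Now let $\hat{j} = j \cup \{(x_{n}, x_{n})\} \in \cF_{n+1,n+1}$ and set $\chi = H^{n+1}_{\delta}(\psi, \hat{j}^{-1})$; by condition (\ref{htwozero}) of Definition \ref{prodef}, $\chi \in \Phi^{n+1}_{\delta}$. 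Using part (\ref{hcompose}) of Remark \ref{tworems}, a direct calculation gives $H^{n+1}_{\delta}(\chi, i_{n}) = H^{n+1}_{\delta}(\psi, \hat{j}^{-1} \circ i_{n}) = H^{n+1}_{\delta}(\psi, i_{n} \circ j^{-1}) = H^{n}_{\delta}(\psi', j^{-1}) = \phi_{a}$. Thus the strong weaving condition applies to $\chi$ and yields $b \in [\kappa]^{n+1}$ containing $a$ and $y \in X_{n+1} \setminus \range(j_{a,b})$ with $\chi = H^{n+1}_{\delta}(\phi_{b}, j_{a,b} \cup \{(x_{n}, y)\})$. Undoing the permutation (again by part (\ref{hcompose}) of Remark \ref{tworems}, using $\hat{j}^{-1} \circ \hat{j} = i_{n+1}$) gives $\psi = H^{n+1}_{\delta}(\chi, \hat{j}) = H^{n+1}_{\delta}(\phi_{b}, (j_{a,b} \cup \{(x_{n}, y)\}) \circ \hat{j})$, exhibiting $\psi$ in the form required by Definition \ref{cweavedef}.

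The main obstacle is precisely the alignment step in the induction: the strong weaving hypothesis produces one-point extensions of $\phi_{a}$ only for $\chi$ whose $i_{|a|}$-restriction equals $\phi_{a}$ on the nose, whereas the induction hypothesis only yields that $\psi$ restricts to some $j$-permutation of $\phi_{a}$. The permutation $\hat{j}^{-1}$ is designed to absorb this mismatch while fixing the new variable $x_{n}$, so that $\chi$ has the correct restriction, while still allowing $\psi$ to be recovered from $\chi$ by the inverse permutation $\hat{j}$. Once this bookkeeping is carried out via part (\ref{hcompose}) of Remark \ref{tworems}, everything else in the argument is routine.
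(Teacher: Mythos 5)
Your proof is correct and follows essentially the same route as the paper's (very terse) argument: the weaving property is obtained by pulling $\psi$ back to a one-point extension $\theta$ at level $\delta$ via condition (\ref{ppath}) of Definition \ref{prodef} (your use of Proposition \ref{succcase} is just the sharpened form of that condition) and then applying the strong weaving clause, while completeness is the induction on $n$ that the paper leaves to the reader. Your explicit bookkeeping with the permutation $\hat{j}$ to realign $H^{n+1}_{\delta}(\psi, i_{n})$ with $\phi_{a}$ on the nose is exactly the detail the paper suppresses, and it checks out.
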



\begin{proof}
  That a strong weaving satisfies condition (\ref{goodlift2}) of Definition \ref{threaddef2} follows from condition (\ref{ppath}) of Definition \ref{prodef}.
  Completeness for formulas in $\Phi \cap \Psi^{n}_{\delta}$ follows by induction on $n$.
\end{proof}



A subset $S$ of a collection $C$ of sets is $\subseteq$-\emph{cofinal} in $C$ if every member of $C$ is contained in a member of $S$.

\begin{prop}\label{cofstrong}
Suppose that $\langle \Phi_{\alpha} : \alpha \leq \delta \rangle$ is a Scott process such that $\Phi_{\delta}$ amalgamates. Let $\mathcal{W} = \{ \phi_{a} : a \in [\kappa]^{\less\omega} \}$ be a subset of $\Phi$ satisfying conditions (\ref{basic}) and (\ref{allin2}) of Definition \ref{threaddef2}, such that the set of $a \in [\kappa]^{\less\omega}$ for which the condition in Definition \ref{sweavingdef} is satisfied is $\subseteq$-cofinal in $[\kappa]^{\less\omega}$. Then $\mathcal{W}$ is a strong weaving.
\end{prop}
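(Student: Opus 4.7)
The plan is to promote the strong weaving condition from a cofinal $a' \supseteq a$ down to $a$ itself by combining amalgamation with a careful tracking of indices. Fix $a \in [\kappa]^{\less\omega}$ and $\psi \in \Phi \cap \Psi^{|a|+1}_{\delta}$ with $H^{|a|+1}_{\delta}(\psi, i_{|a|}) = \phi_{a}$. Choose $a' \supseteq a$ belonging to the $\subseteq$-cofinal good set (if $a$ itself lies there we are done). By condition (\ref{allin2}) of Definition \ref{threaddef2}, $H^{|a'|}_{\delta}(\phi_{a'}, j_{a,a'}) = \phi_{a} = H^{|a|+1}_{\delta}(\psi, i_{|a|})$, putting $\psi$ and $\phi_{a'}$ in position to be amalgamated.

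Apply Proposition \ref{amalgamation} with $m = |a|$, $n = |a|+1$, $p = |a'|$, $j = i_{|a|}$, $k = j_{a,a'}$ to obtain $q \in \omega$, $\rho \in \Phi^{q}_{\delta}$, $j' \in \cF_{|a|+1, q}$ and $k' \in \cF_{|a'|, q}$ with $X_{q} = \range(j') \cup \range(k')$, $j' \circ i_{|a|} = k' \circ j_{a,a'}$, $\psi = H^{q}_{\delta}(\rho, j')$ and $\phi_{a'} = H^{q}_{\delta}(\rho, k')$. Since $\range(j') \cap \range(k')$ contains $\range(k' \circ j_{a,a'})$, which has size $|a|$, one has $q \in \{|a'|, |a'| + 1\}$.

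If $q = |a'|$, then $k'$ is a permutation of $X_{|a'|}$, and by Remark \ref{tworems}(\ref{hcompose}) we can write $\psi = H^{|a'|}_{\delta}(\phi_{a'}, k'^{-1} \circ j')$, so $c := a'$ already works, with $g := k'^{-1} \circ j'$. If $q = |a'| + 1$, precompose $\rho$ with the permutation $\pi$ of $X_{|a'|+1}$ extending $k'$ by sending $x_{|a'|}$ to the unique variable not in $\range(k')$, yielding $\rho' = H^{|a'|+1}_{\delta}(\rho, \pi)$ in $\Phi^{|a'|+1}_{\delta}$ with $H^{|a'|+1}_{\delta}(\rho', i_{|a'|}) = \phi_{a'}$ and $\psi = H^{|a'|+1}_{\delta}(\rho', \pi^{-1} \circ j')$. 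Applying the strong weaving condition at $a'$ to $\rho'$ produces $b' \in [\kappa]^{|a'|+1}$ with $a' \subseteq b'$ and $y' \in X_{|b'|} \setminus \range(j_{a', b'})$ such that $\rho' = H^{|a'|+1}_{\delta}(\phi_{b'}, j_{a', b'} \cup \{(x_{|a'|}, y')\})$; one more application of Remark \ref{tworems}(\ref{hcompose}) yields $\psi = H^{|b'|}_{\delta}(\phi_{b'}, g)$ for the evident composite $g$, and we set $c := b'$.

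In either case $\psi = H^{|c|}_{\delta}(\phi_{c}, g)$ with $a \subseteq c$ and $g \in \cF_{|a|+1, |c|}$; the range of $g$ identifies a subset $b \subseteq c$ of size $|a|+1$, and writing $g = j_{b,c} \circ \sigma$ for the unique permutation $\sigma$ of $X_{|a|+1}$, condition (\ref{allin2}) of Definition \ref{threaddef2} gives $\phi_{b} = H^{|c|}_{\delta}(\phi_{c}, j_{b,c})$, so $\psi = H^{|a|+1}_{\delta}(\phi_{b}, \sigma)$ by Remark \ref{tworems}(\ref{hcompose}). Chasing the identity $j' \circ i_{|a|} = k' \circ j_{a,a'}$ through the construction, together with the elementary compositional identity $j_{a,c} = j_{b,c} \circ j_{a,b}$ for $a \subseteq b \subseteq c$, shows $g \restrict X_{|a|} = j_{a,c}$, so $a \subseteq b$ and $\sigma \restrict X_{|a|} = j_{a,b}$. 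Taking $y := \sigma(x_{|a|}) \in X_{|b|} \setminus \range(j_{a,b})$ yields $\psi = H^{|a|+1}_{\delta}(\phi_{b}, j_{a,b} \cup \{(x_{|a|}, y)\})$, the required witness. The main obstacle is the index bookkeeping in the second case: one must verify that the permutation converting $\rho$ into $\rho'$, composed with the embedding provided by the strong weaving hypothesis at $a'$, produces an injection whose restriction to $X_{|a|}$ factors cleanly as $j_{b,c} \circ j_{a,b}$ for the intended $b \subseteq b'$.
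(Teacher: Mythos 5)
Your argument is correct and follows the paper's own proof of this proposition: amalgamate $\psi$ with $\phi_{a'}$ for a cofinal ``good'' $a'$ via Proposition \ref{amalgamation}, apply the strong-weaving condition at $a'$ to the resulting amalgam, and project back down to a witness $b = a \cup \{\beta\}$. The only difference is that you explicitly separate out the case $q = |a'|$, in which the new point of $\psi$ is identified with an element of $a' \setminus a$ and the witness is found inside $a'$ without invoking the condition at $a'$ at all --- a case the paper's proof passes over silently when it asserts that the amalgam may be taken in $\Phi^{|a'|+1}_{\delta}$ with the new point sent to $x_{|a'|}$.
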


\begin{proof} Suppose that we have $a \subseteq b \in [\kappa]^{\less\omega}$, and that the condition in Definition \ref{sweavingdef} holds for $b$.
Suppose that $\psi \in \Phi^{|a|+1}_{\delta}$ is such that $H^{|a|+1}_{\delta}(\psi, i_{|a|}) = \phi_{a}$.
By Proposition \ref{amalgamation}, there is formula $\theta \in \Phi^{|b|+1}$ such that $H^{|b|+1}_{\delta}(\theta, i_{|b|}) = \phi_{b}$
and $H^{|b|+1}_{\delta}(\theta, j_{a,b} \cup \{(x_{|a|},x_{|b|})\}) = \psi$. Then there exist a $\beta \in \kappa \setminus b$ a $y \in X_{|b|+1} \setminus \range(j_{b,b \cup \{\beta\}})$ such that $$\theta = H^{|b|+1}_{\delta}(\phi_{b \cup \{\beta\}}, j_{b,b \cup \{\beta\}} \cup \{(x_{|b|}, y)\}),$$ which implies
that $$\psi = H^{|a|+1}_{\delta}(\phi_{a \cup \{\beta\}}, j_{a,a \cup \{\beta\}} \cup \{(x_{|a|}, y)\}),$$ for
$y$ the unique element of $X_{|a|+1} \setminus \range(j_{a,a \cup \{\beta\}})$.
\end{proof}


\begin{prop}\label{existweaving} If $\langle \Phi_{\alpha} : \alpha \leq \delta \rangle$ is a Scott process such that $\Phi_{\delta}$ amalgamates and has cardinality $\aleph_{1}$, then there is a strong weaving through $\Phi_{\delta}$.
\end{prop}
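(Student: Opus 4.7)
By Proposition \ref{cofstrong}, it suffices to construct a family $\mathcal{W} = \{\phi_a : a \in [\omega_1]^{\less\omega}\} \subseteq \Phi_\delta$ satisfying conditions (\ref{basic}) and (\ref{allin2}) of Definition \ref{threaddef2}, such that the condition in Definition \ref{sweavingdef} holds on a $\subseteq$-cofinal subset of $[\omega_1]^{\less\omega}$. The plan is to build $\mathcal{W}$ by a coherent recursion of length $\omega_1$: at stage $\alpha$ define $\phi_a$ for all $a \in [\alpha]^{\less\omega}$, with limit stages obtained by taking unions of previously defined values. All the work happens at successor stages, analogously to the countable constructions in the proof of Proposition \ref{threadcon} and Theorem \ref{addthis}. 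Alongside the construction, I fix a standard $\omega_1$-bookkeeping enumeration $\langle (b_\alpha, \psi_\alpha) : \alpha < \omega_1 \rangle$ of all pairs $(b, \psi)$ with $b \in [\omega_1]^{\less\omega}$ and $\psi \in \Phi^{|b|+1}_\delta$, such that each such pair occurs at some index $\alpha$ with $b \subseteq \alpha$; this is possible because there are $\aleph_1$ such pairs.

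\textbf{Successor step at $\alpha+1$.} Write $(b, \psi) = (b_\alpha, \psi_\alpha)$ and call it an \emph{admissible task} if $H^{|b|+1}_\delta(\psi, i_{|b|}) = \phi_b$. I need to define $\phi_{a \cup \{\alpha\}}$ for every $a \in [\alpha]^{\less\omega}$, arranging $\phi_{b \cup \{\alpha\}} = \psi$ when the task is admissible. Fix an increasing sequence $a_0 \subseteq a_1 \subseteq \cdots$ of finite subsets of $\alpha$ with $\bigcup_n a_n = \alpha$ (with $a_0 \supseteq b$ in the admissible case), and recursively choose $\chi_n \in \Phi^{|a_n|+1}_\delta$ satisfying (i) $H^{|a_n|+1}_\delta(\chi_n, i_{|a_n|}) = \phi_{a_n}$; (ii) $H^{|a_n|+1}_\delta(\chi_n, j^{(n)}) = \chi_{n-1}$, where $j^{(n)} \in \cF_{|a_{n-1}|+1, |a_n|+1}$ is the injection that embeds $X_{|a_{n-1}|}$ into $X_{|a_n|}$ according to the inclusion $a_{n-1} \subseteq a_n$ and sends $x_{|a_{n-1}|}$ to $x_{|a_n|}$; and (iii) at $n = 0$, in the admissible case, that the natural projection of $\chi_0$ to the coordinates of $b \cup \{\alpha\}$ is $\psi$. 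The existence of $\chi_n$ is supplied by the amalgamation hypothesis on $\Phi_\delta$ in the precise form of Proposition \ref{amalgamation}, applied to $\chi_{n-1}$ and $\phi_{a_n}$ (which agree on $\phi_{a_{n-1}}$); at $n = 0$ an additional amalgamation with $\psi$ (which agrees with $\phi_{a_0}$ on $\phi_b$) handles the task. For any $a \in [\alpha]^{\less\omega}$, set $\phi_{a \cup \{\alpha\}} = H^{|a_n|+1}_\delta(\chi_n, j)$ for any $n$ with $a \subseteq a_n$ and $j$ the natural embedding; well-definedness follows from (ii) together with part (\ref{hcompose}) of Remark \ref{tworems}.

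\textbf{Main obstacle and conclusion.} The main technical point is the existence of each $\chi_n$ with the correct arity $|a_n|+1$ and the correct orientation of variables. Proposition \ref{amalgamation} delivers an amalgamating formula in $\Phi^q_\delta$ with $X_q = \range(j') \cup \range(k')$, and in our application the two ranges together cover exactly $|a_n|+1$ variables, so $q = |a_n|+1$ as required; any discrepancy in variable orientation can be corrected after the fact using condition (\ref{htwozero}) of Definition \ref{prodef} and part (\ref{permutevar}) of Remark \ref{tworems}. Granted this, conditions (\ref{basic}) and (\ref{allin2}) of Definition \ref{threaddef2} are preserved at every stage by construction, and the bookkeeping guarantees that for every $a \in [\omega_1]^{\less\omega}$ and every $\psi \in \Phi^{|a|+1}_\delta$ with $H^{|a|+1}_\delta(\psi, i_{|a|}) = \phi_a$, the pair $(a, \psi)$ is handled at some successor stage $\alpha+1$ with $a \subseteq \alpha$, yielding $\phi_{a \cup \{\alpha\}} = \psi$. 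Therefore the strong weaving condition holds for \emph{every} $a \in [\omega_1]^{\less\omega}$ — a fortiori on a $\subseteq$-cofinal set — and Proposition \ref{cofstrong} completes the proof.
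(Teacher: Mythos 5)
Your overall strategy is the same as the paper's: a transfinite recursion of length $\omega_{1}$ in which stage $\alpha$ handles one bookkeeping task by declaring $\phi_{b \cup \{\alpha\}} = \psi$ and then propagates this choice to all of $[\alpha+1]^{\less\omega}$ via an $\omega$-chain of amalgamations along an enumeration of $\alpha$. But there is a genuine gap at the central step. You assert that Proposition \ref{amalgamation} delivers $\chi_{n}$ with ``$X_{q} = \range(j') \cup \range(k')$, and in our application the two ranges together cover exactly $|a_{n}|+1$ variables, so $q = |a_{n}|+1$.'' That is exactly what needs to be proved, not what the proposition gives you. Proposition \ref{amalgamation} only guarantees \emph{some} amalgamation of $\chi_{n-1}$ and $\phi_{a_{n}}$ over $\phi_{a_{n-1}}$, and its conclusion allows $q$ to be anything from $\max\{|a_{n-1}|+1, |a_{n}|\}$ up to $|a_{n}|+1$; when $q = |a_{n}|$ the new variable of $\chi_{n-1}$ (representing the new point $\alpha$) has been identified with the variable representing the point of $a_{n}\setminus a_{n-1}$, and no $\chi_{n}$ of arity $|a_{n}|+1$ is produced. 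This identification is not a formal nuisance to be ``corrected after the fact'' by permuting variables: it can be genuinely forced. If $\psi$ is algebraic over $\phi_{b}$ in the sense that only one one-point extension of a tuple realizing $\phi_{b}$ satisfies it, and that extension has already been assigned to some $\beta < \alpha$ (i.e.\ $\phi_{b\cup\{\beta\}}$ already realizes $\psi$), then every amalgamation of $\psi$ (viewed as the type of $\alpha$ over $b$) with a formula mentioning $\beta$ must identify $\alpha$ with $\beta$, since all formulas in $\Psi^{n}_{0}$ assert pairwise inequality of their variables. Your construction handles \emph{every} admissible task, including ones of this already-met kind, so it will reach a stage where the required $\chi_{n}$ simply does not exist.

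The paper's proof avoids this in two linked ways that your argument omits. First, at stage $\alpha$ it only acts on a pair $(a,\psi)$ for which the condition of Definition \ref{sweavingdef} ``has not been met,'' i.e.\ $\psi$ is not already realized by $\phi_{a \cup \{\beta\}}$ for any previously used $\beta$. Second, it uses precisely this unmet-ness to rule out identification at each step of the $\omega$-chain: if the only amalgamation of $\phi_{a\cup\{\alpha\}\cup\pi[n-1]}$ with $\phi_{a\cup\pi[n]}$ identified $\alpha$ with $\pi(n-1)$, then projecting down would show $\psi = \phi_{a\cup\{\pi(n-1)\}}$ (up to the variable correspondence allowed in Definition \ref{sweavingdef}), contradicting the choice of the task as unmet; hence a non-identifying amalgamation exists. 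To repair your argument you should (i) skip tasks that are already met (they impose no obligation, since the strong weaving condition only demands that \emph{some} $b \supseteq a$ witness $\psi$), and (ii) insert the contrapositive argument above at each amalgamation step to justify $q = |a_{n}|+1$. With those two additions your construction becomes essentially the paper's.
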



\begin{proof} We recursively pick suitable formulas $\phi_{a}$, for $a \in [\omega_{1}]^{\less\omega}$. To begin with, let $\phi_{n}$ $(n \in \omega)$ be any elements of $\Phi_{\delta}$ with the property that $H^{n}_{\delta}(\phi_{n}, i_{m}) = \phi_{m}$, for all $m \leq n < \omega$. Suppose now that we have $\alpha < \omega_{1}$ and that $\phi_{a}$ has been chosen for each finite subset of $\alpha$ (note that a choice of $\phi_{a}$ determines a choice of $\phi_{b}$ for each subset of $b$, where $a$ is a finite subset of $\omega_{1}$). Following some bookkeeping, we fix the least pair $a, \psi$ as in Definition \ref{sweavingdef} for which the corresponding condition has not been met, and let $\phi_{a \cup \{\alpha\}}$ be this $\psi$. Fixing a bijection $\pi \colon \omega \to (\alpha \setminus a)$, we now successively choose the formulas $\phi_{a \cup \{\alpha\} \cup \pi[n]}$. For each positive $n$, the choice of $\phi_{a \cup \{\alpha\} \cup \pi[n]}$ requires amalgamating $\phi_{a \cup \{\alpha\} \cup \pi[n-1]}$ with $\phi_{a \cup \pi[n]}$, which have already been chosen. The fact that $\Phi_{\delta}$ amalgamates (via Proposition \ref{amalgamation}) implies that there exists a suitable choice of $\phi_{a \cup \{\alpha\} \cup \pi[n]}$. Since $\phi_{a \cup \{\pi(n-1)\}}$ did not satisfy third condition of Definition \ref{sweavingdef} with respect to $a$ and $\psi$, this choice of $\phi_{a \cup \{\alpha\} \cup \pi[n]}$ does not require identifying $\pi(n-1)$ and $\alpha$. Proceeding in this fashion completes the construction of the desired strong weaving.
\end{proof}

Putting together Theorem \ref{existmodel2} with Propositions \ref{betterpick} and \ref{existweaving}, we have the following.

\begin{thrm}\label{omega1model}
  If $\langle \Phi_{\alpha} : \alpha \leq \delta \rangle$ is a Scott process, $\Phi_{\delta}$ amalgamates and $|\Phi_{\delta}| \leq \aleph_{1}$, then
  $\langle \Phi_{\alpha} : \alpha \leq \delta \rangle$ has a model.
\end{thrm}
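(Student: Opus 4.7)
The plan is to assemble the theorem from the machinery developed immediately prior. The key observation is that everything needed has effectively been set up, so the proof is largely a citation.

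First I would split into two cases according to whether $|\Phi_{\delta}| \leq \aleph_{0}$ or $|\Phi_{\delta}| = \aleph_{1}$. In the countable case, amalgamation is not needed at all: Theorem \ref{ctblehasmodel} directly provides a countable model of $\langle \Phi_{\alpha} : \alpha \leq \delta \rangle$, so this case is immediate.

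In the case $|\Phi_{\delta}| = \aleph_{1}$, the strategy is to chain together three results. First apply Proposition \ref{existweaving} (which uses the amalgamation hypothesis on $\Phi_{\delta}$ and the cardinality bound $\aleph_1$) to obtain a strong weaving $\{\phi_{a} : a \in [\omega_{1}]^{\less\omega}\}$ through $\Phi_{\delta}$. Next apply Proposition \ref{betterpick} to conclude that this strong weaving is simultaneously a weaving (in the sense of Definition \ref{threaddef2}) and complete (in the sense of Definition \ref{cweavedef}). Finally, invoke Theorem \ref{existmodel2} with this weaving and a set $C = \{c_{\alpha} : \alpha < \omega_{1}\}$ to produce a $\tau$-structure $M$ on $C$ in which each tuple $\langle c_{\alpha} : \alpha \in a\rangle$ satisfies $\phi_{a}$.

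It remains to check that $M$ really is a model of the given Scott process. By construction and the inductive content of the proof of Theorem \ref{existmodel2}, every level $\Phi_{\beta}$ for $\beta < \delta$ is reflected in the Scott process of $M$; by Remark \ref{cweaverem}, completeness of the weaving gives that the $\delta$-th level of the Scott process of $M$ equals $\Phi_{\delta}$ rather than merely being contained in it. Hence $M$ is a model of $\langle \Phi_{\alpha} : \alpha \leq \delta \rangle$. The substantive work is already encapsulated in Proposition \ref{existweaving}, whose back-and-forth-style recursion of length $\omega_{1}$ uses amalgamation (through Proposition \ref{amalgamation}) to reconcile the previously chosen formulas with each newly imposed instance of the strong weaving condition; no further obstacle arises at this stage.
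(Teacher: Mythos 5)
Your proposal is correct and follows essentially the same route as the paper, which obtains the theorem by combining Proposition \ref{existweaving}, Proposition \ref{betterpick} and Theorem \ref{existmodel2} (with Remark \ref{cweaverem} supplying the equality at level $\delta$). Your explicit case split for $|\Phi_{\delta}| \leq \aleph_{0}$ via Theorem \ref{ctblehasmodel} is a reasonable added precaution, since Proposition \ref{existweaving} is stated for cardinality exactly $\aleph_{1}$, but it does not change the substance of the argument.
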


\begin{remark}
  One difference between the construction in this section and the construction in Section \ref{threadsec} is that in Proposition \ref{threadcon} there were (in some cases) many options for the thread produced, as every $\tau$-structure whose Scott process extends $\langle \Phi_{\alpha} : \alpha \leq \delta \rangle$ is given by a thread through $\Phi_{\delta}$. In this section, with uncountably many tasks to complete, we need to rely on the fact that $\Phi_{\delta}$ amalgamates, which means that we construct models of Scott rank $\delta$ only. The assumption that $\Phi_{\delta}$ amalgamates also enables the simplification given by Proposition \ref{betterpick}.
\end{remark}

\begin{remark}\label{noomega2}
  One might naturally try to adapt the proof of Theorem \ref{existmodel2} to build a model of size $\aleph_{2}$ by assigning a formula from $\Phi_{\delta}$ to each finite tuple from $\omega_{2}$. Doing this in the manner of the proof of Theorem \ref{existmodel2}, one finds oneself with an uncountable $\alpha < \omega_{2}$ such that formulas have been assigned for all finite subsets of $\alpha$, but not for $\{ \alpha\}$. Choosing formulas for all finite subsets of $\alpha + 1$, one comes to a point where, for some countably infinite $B \subseteq \alpha$, formulas have been chosen for all sets of the form $\{ \alpha \} \cup b$, for $b$ a finite subset of $B$. Then, for some $\beta \in \alpha \setminus B$, one would like to chose a formula for some finite superset $c$ of $\{\alpha, \beta\}$ intersecting $B$.
  Finally, consider $\gamma \in B \setminus c$. We have at this point that formulas have been chosen for $\{\alpha, \gamma\}$, $\{\beta, \gamma\}$ and $c$, but not for $\{\alpha, \beta, \gamma\}$, and our assumptions do not give us suitable choice for $\{\alpha, \beta, \gamma\}$ that extends the choices already made.
  One can naturally define a notion of 3-amalgamation such that this construction could succeed under the assumption that this property holds.
\end{remark}

\begin{remark}\label{addthisrem}
  The natural attempt to combine the proofs of Theorem \ref{addthis} and Proposition \ref{existweaving} to produce a version of Theorem \ref{addthis} for models of size $\aleph_{1}$ runs into a problem similar to the one in Remark \ref{noomega2}. In this case, we have a Scott process $\langle \Phi_{\alpha} : \alpha \leq \beta \rangle$, for some $\beta \in [\omega_{1}, \omega_{2})$ such that, letting $\Phi^{*}$ be the set of isolated threads in $\Phi_{\beta}$,
  \begin{itemize}
  \item $\Phi^{*}$ is a proper subset of $\Phi_{\beta}$,
  \item the extension of $\langle \Phi_{\alpha} : \alpha < \beta \rangle$ by $\Phi^{*}$ gives a Scott process.
  \end{itemize}
  We could then try to build a strong weaving $\{ \phi_{a} : a \in [\omega_{1}]^{\less\omega}\}$ through $\Phi_{\beta}$, and an an uncountable set $Y \subseteq \omega_{1}$ such that $\{\phi_{a} : a \in [Y]^{\less\omega}\}$ is a strong weaving through $\Phi^{*}$ (or, more precisely, induces one via some bijection between $Y$ and $\omega_{1}$). Carrying out this construction, we come to a point where, for some infinite $\gamma < \omega_{1}$, $\phi_{a}$ has been chosen for every finite subset of $\gamma$, and for $\{\gamma\} \cup a$, for some finite $a \subseteq \gamma$ intersecting $Y$ as so far constructed, but not contained in it. At some stages it will also be that this $\gamma$ has been put into $Y$. Now suppose that $\delta$ is in $Y \cap \gamma$, as constructed so far, but that no formula for $\{\delta, \gamma\}$ has been chosen. Then we need to choose a formula for $a \cup \{\delta, \gamma\}$ such that the induced formula for $(a \cap Y) \cup \{\delta, \gamma\}$ is in $\Phi^{*}$. Since $\Phi_{\delta}$ amalgamates, we can choose a formula for $a \cup \{\delta, \gamma\}$, but we can't guarantee that the induced formula for $(a \cap Y) \cup \{\delta, \gamma\}$ will be in $\Phi^{*}$. Similarly, since $\Phi^{*}$ amalgamates we can choose a formula for $(a \cap Y)\cup \{\delta, \gamma\}$ in $\Phi^{*}$. Then we have the same 3-amalgamation issue as in Remark \ref{noomega2}, as we would then need to amalgamate the chosen formulas for $(a \cap Y)\cup \{\delta, \gamma\}$, $a \cup \{\delta\}$ and $a \cup \{\gamma\}$ in $\Phi_{\beta}$.
\end{remark}


\section{Finite existential blocks}\label{qsec}

The function $E$ defined in Definition \ref{pdef} corresponds to a single existential quantifier. In this section we extend $E$ to the function $F$ which corresponds to finite blocks of existential quantifiers. The analysis of $F$ in this section is used in the following section. Most of this section consists of consequences of Proposition \ref{qlem}.


\begin{df}\label{qdef} For each ordinal $\beta$, each $m \in \omega$ and each $\phi \in \Psi^{m}_{\beta}$,
  $F(\phi)$ is the set of $\psi$ such that for some $n \in \omega$ and some ordinal $\alpha$ with $\alpha + n \leq \beta$, $\psi \in \Psi^{m+n}_{\alpha}$ and there exist $\psi_{0},\ldots,\psi_{n}$ such that
  \begin{itemize}
  \item $\psi_{0} = \psi$;
  \item for all $p \in \{0,\ldots,n-1\}$, $\psi_{p} \in E(\psi_{p+1})$;
  \item $\psi_{n} = V_{\alpha+n, \beta}(\phi)$.
  \end{itemize}
\end{df}

\begin{remark}
  Suppose that $\beta < \delta$, $\phi \in \Phi_{\beta}$ and $\psi_{0},\ldots,\psi_{n}$ are as in Definition \ref{qdef}.
  Then by condition (\ref{econtain}) of Definition \ref{prodef},
  each $\psi_{i}$ is in $\Phi^{m+n-i}_{\alpha + i}$.
\end{remark}

\begin{remark}\label{fshift}
  Given $\alpha, \beta, \phi$ and $\psi$ as in Definition \ref{qdef}, the issue of whether or not $\psi$ is in $F(\phi)$ depends only on $V_{\alpha + n, \beta}(\phi)$ (as opposed to $\phi$). It follows that $\psi \in F(\theta)$ for any formula $\theta \in \Psi^{m}_{\gamma}$ (for some ordinal $\gamma \geq \alpha + n$) such that $V_{\alpha + n, \gamma}(\theta) = V_{\alpha + n, \beta}(\phi)$.
\end{remark}

Fix for rest of this section a Scott process $\langle \Phi_{\alpha}  :\alpha < \delta\rangle$.

\begin{prop}\label{qlem}
   Suppose that $m,n \in \omega$ and $\alpha, \beta < \delta$ are such that $\alpha + n \leq \beta$. Let $\phi$ and $\psi$ be elements of $\Phi^{m}_{\beta}$ and $\Phi^{m+n}_{\alpha}$, respectively. Then $\psi \in F(\phi)$ if and only if there is a formula $\theta \in \Phi^{m+n}_{\beta}$ such that
  $H^{m+n}_{\beta}(\theta,i_{m}) = \phi$ and $V_{\alpha, \beta}(\theta) = \psi$.
\end{prop}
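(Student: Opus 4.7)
The plan is to prove both implications by directly building the objects in each direction, using Proposition \ref{succcase} as the bridge between $E(-)$ and one-point horizontal extensions.

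For the forward direction, suppose $\psi \in F(\phi)$ with witnessing chain $\psi_0 = \psi, \psi_1, \ldots, \psi_n = V_{\alpha + n, \beta}(\phi)$, where $\psi_p \in E(\psi_{p+1})$ and (by condition (\ref{econtain}) of Definition \ref{prodef}) $\psi_p \in \Phi^{m+n-p}_{\alpha + p}$. I will construct, by downward recursion on $p$ from $n$ to $0$, formulas $\theta_p \in \Phi^{m+n-p}_{\beta}$ satisfying $V_{\alpha + p, \beta}(\theta_p) = \psi_p$ and $H^{m+n-p}_{\beta}(\theta_p, i_{m+n-p-1}) = \theta_{p+1}$ (starting with $\theta_n = \phi$). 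At each step, since $\psi_p \in E(\psi_{p+1}) = E(V_{\alpha + p + 1, \beta}(\theta_{p+1}))$, Proposition \ref{succcase} applied to $\theta_{p+1} \in \Phi^{m+n-p-1}_{\beta}$ produces exactly such a $\theta_p$. The compatibility condition, combined with part (\ref{hcompose}) of Remark \ref{tworems} and the fact that $i_{m+k} \circ i_{m} = i_{m}$ when composed as functions into larger index sets, gives $H^{m+n-p}_{\beta}(\theta_p, i_m) = \phi$ by induction. Setting $\theta = \theta_0$ yields the required witness.

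For the backward direction, suppose $\theta \in \Phi^{m+n}_{\beta}$ satisfies $H^{m+n}_{\beta}(\theta, i_m) = \phi$ and $V_{\alpha, \beta}(\theta) = \psi$. Define, for $k = 0, \ldots, n$, the formula $\theta_{k} = H^{m+n}_{\beta}(\theta, i_{m+k})$, so that $\theta_0 = \phi$ and $\theta_n = \theta$ (using Remark \ref{hidrem}). Then set $\psi_p = V_{\alpha + p, \beta}(\theta_{n-p})$ for $p = 0, \ldots, n$. By construction, $\psi_p \in \Phi^{m+n-p}_{\alpha + p}$, $\psi_0 = \psi$, and $\psi_n = V_{\alpha + n, \beta}(\phi)$. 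To verify that $\psi_p \in E(\psi_{p+1})$, apply Proposition \ref{succcase} to $\theta_{n-p-1}$: it suffices to show $H^{m+n-p}_{\beta}(\theta_{n-p}, i_{m+n-p-1}) = \theta_{n-p-1}$, which follows from part (\ref{hcompose}) of Remark \ref{tworems} together with the identity $i_{m+n-p} \circ i_{m+n-p-1} = i_{m+n-p-1}$ (when both compositions are viewed as maps into $X_{m+n}$).

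Neither direction presents a serious obstacle: both are essentially a matter of carefully lining up indices and applying Proposition \ref{succcase} once per step in the chain. The only delicate point is keeping the bookkeeping consistent when invoking the $H$-composition identity from Remark \ref{tworems}, so that the witnesses produced at each stage of the forward recursion project down to the correct $\phi$ and the $\theta_k$ in the backward direction assemble into the desired chain.
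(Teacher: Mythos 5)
Your proof is correct and is essentially the paper's argument: the paper proves the proposition by induction on $n$ with base case Proposition \ref{succcase}, and your explicit chain constructions (downward recursion on $\theta_p$ in the forward direction, the formulas $\theta_k = H^{m+n}_{\beta}(\theta, i_{m+k})$ in the backward direction) simply unroll that induction, invoking Proposition \ref{succcase} once per link together with part (\ref{hcompose}) of Remark \ref{tworems}. The bookkeeping with the identity injections is handled correctly, so no gap remains.
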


\begin{proof}
  By induction on $n$.
  In the case $n = 1$, $\psi \in F(\phi)$ if and only if $\psi \in E(V_{\alpha + 1, \beta}(\phi))$. In this case, the proposition is
  Proposition \ref{succcase}.
  The induction step from $n = p$ to $n = p + 1$ follows from the induction hypothesis in the cases $n =p$ and $n = 1$.
\end{proof}

\begin{remark}\label{flat}
  Applying Proposition \ref{qlem} and condition (\ref{htwo}) of Definition \ref{prodef}, we get that if
  $m,n,p \in \omega$ and $\alpha, \beta < \delta$ are such that $\alpha + n + p\leq \beta$, and if $\phi \in \Phi^{m}_{\beta}$, then
  for each $\psi \in \Phi^{m+n}_{\alpha} \cap F(\phi)$ there exists a $\rho \in
  \Phi^{m+n+p}_{\alpha} \cap F(\phi)$ such that $H^{m+n+p}_{\alpha}(\rho, i_{m+n}) = \psi$.
\end{remark}

\begin{remark}\label{permutef}
  Fix $m, n \in \omega$ and suppose $\alpha, \beta < \delta$ are such that $\alpha + n \leq \beta$. Let $\phi$ be an element of $\Phi^{m+n}_{\beta}$, let $f$ be an element of $\cI_{m, m+n}$ and let $g$ be any element of $\cI_{m+n,m+n}$ extending $f$. Then, by Proposition \ref{qlem} and part (\ref{hcompose}) of Remark \ref{tworems}, $V_{\alpha,\beta}(H^{m+n}_{\beta}(\phi,g))$ is in $F(H^{m+n}_{\beta}(\phi, f))$.
\end{remark}

Proposition \ref{qdownshift} follows from Proposition \ref{qlem} and Remark \ref{vdir}.

\begin{prop}\label{qdownshift}
  Fix $\phi \in \Phi^{m}_{\beta}$, for some $\beta < \delta$ and $m \in \omega$. Let $\psi \in \Phi^{m+n}_{\alpha}$ be an element of $F(\phi)$, for some $n \in \omega$ and some ordinal $\alpha$ with $\alpha + n \leq \beta$. Then for all $\gamma < \alpha$, $V_{\gamma, \alpha}(\psi) \in F(\phi)$.
\end{prop}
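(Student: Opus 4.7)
The plan is to invoke Proposition \ref{qlem} in both directions, using the witness produced by $\psi \in F(\phi)$ and then feeding it back in at the lower level $\gamma$. The commutativity of the vertical projection functions (Remark \ref{vdir}) is exactly what ensures that the same witness works.

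More concretely, I would proceed as follows. Since $\psi \in F(\phi)$, Proposition \ref{qlem} (applied at $\alpha$, $\beta$, $m$, $n$) gives a formula $\theta \in \Phi^{m+n}_{\beta}$ with $H^{m+n}_{\beta}(\theta, i_{m}) = \phi$ and $V_{\alpha, \beta}(\theta) = \psi$. Now fix $\gamma < \alpha$. First observe that $\gamma + n < \alpha + n \leq \beta$, so the hypothesis of Proposition \ref{qlem} is satisfied with $\gamma$ in place of $\alpha$. Second, by Remark \ref{vdir},
\[
V_{\gamma, \beta}(\theta) = V_{\gamma, \alpha}(V_{\alpha, \beta}(\theta)) = V_{\gamma, \alpha}(\psi),
\]
which by Remark \ref{vone} lies in $\Phi^{m+n}_{\gamma}$. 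Thus $\theta$ itself witnesses, via Proposition \ref{qlem} applied with the triple $(\gamma, \beta, n)$, that $V_{\gamma, \alpha}(\psi) \in F(\phi)$.

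There is essentially no obstacle here; the statement is a direct combination of the characterization of $F$ given by Proposition \ref{qlem} (which replaces the inductive chain $\psi_{0}, \ldots, \psi_{n}$ in Definition \ref{qdef} with a single vertical preimage of $\psi$ at level $\beta$) and the functorial property $V_{\gamma, \beta} = V_{\gamma, \alpha} \circ V_{\alpha, \beta}$ from Remark \ref{vdir}. Writing it up should take only a couple of lines, with the only care being to check that $\gamma + n \leq \beta$ so that Proposition \ref{qlem} may legitimately be applied at the lower level.
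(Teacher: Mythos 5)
Your proof is correct and is exactly the argument the paper intends: the paper disposes of this proposition in one line, citing Proposition \ref{qlem} and Remark \ref{vdir}, which is precisely the witness-transfer you spell out. (The only cosmetic quibble is that membership of $V_{\gamma,\alpha}(\psi)$ in $\Phi^{m+n}_{\gamma}$, as opposed to $\Psi^{m+n}_{\gamma}$, comes from condition (\ref{vfive}) of Definition \ref{prodef} rather than Remark \ref{vone} alone, but this changes nothing.)
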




Propositions \ref{proppartone} and \ref{qlem} imply that members of $F(\phi)$ project horizontally to vertical projections of $\phi$.

\begin{prop} Suppose that $\alpha < \beta < \delta$, $m \leq n \in \omega$, $\phi \in \Phi^{m}_{\beta}$ and
$\psi \in \Phi^{n}_{\alpha} \cap F(\phi)$.
Then
$H^{n}_{\alpha}(\psi, i_{m}) = V_{\alpha, \beta}(\phi)$.
\end{prop}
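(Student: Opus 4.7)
The plan is to chain Proposition \ref{qlem} with Proposition \ref{proppartone} (the commutativity of the horizontal and vertical projections), as the statement just restates the compatibility between these two operations for members of $F(\phi)$.

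First, I would unpack what $\psi \in F(\phi)$ means quantitatively: since $\psi \in \Phi^{n}_{\alpha}$ and $\phi \in \Phi^{m}_{\beta}$, Definition \ref{qdef} forces $\alpha + (n-m) \leq \beta$, so Proposition \ref{qlem} applies (with $m$ as given and its ``$n$'' being $n-m$). That proposition yields a formula $\theta \in \Phi^{n}_{\beta}$ which simultaneously witnesses both of the coordinates of $\psi$: namely,
\[
H^{n}_{\beta}(\theta, i_{m}) = \phi \qquad \text{and} \qquad V_{\alpha, \beta}(\theta) = \psi.
\]

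Next, I would apply $V_{\alpha, \beta}$ to both sides of the first equation and then commute $V$ past $H$ using Proposition \ref{proppartone}:
\[
V_{\alpha,\beta}(\phi) \;=\; V_{\alpha,\beta}(H^{n}_{\beta}(\theta, i_{m})) \;=\; H^{n}_{\alpha}(V_{\alpha,\beta}(\theta), i_{m}) \;=\; H^{n}_{\alpha}(\psi, i_{m}),
\]
where the last step substitutes the second equation from Proposition \ref{qlem}. This is exactly the desired identity.

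There is no real obstacle: the proposition is essentially bookkeeping combining the ``witness'' form of $F$ given by Proposition \ref{qlem} with the commutativity of horizontal and vertical projections. The only minor thing to watch is the index arithmetic when invoking Proposition \ref{qlem} (it is stated for a tuple increment ``$n$'' while here $n$ already denotes the total arity), but this is purely notational.
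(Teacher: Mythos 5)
Your proof is correct and follows exactly the route the paper indicates: it states only that the proposition "follows from Propositions \ref{proppartone} and \ref{qlem}," and your argument is precisely the intended elaboration (extract the witness $\theta$ from Proposition \ref{qlem}, then commute $V$ past $H$ via Proposition \ref{proppartone}). Your remark that the disjointness of the sets $\Psi^{n}_{\alpha}$ pins down the parameters in Definition \ref{qdef} so that $\alpha + (n-m) \leq \beta$ is the right way to justify applying Proposition \ref{qlem}.
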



Proposition \ref{qlift} is used in the proof of Theorem \ref{limitext}.


\begin{prop}\label{qlift}
  Suppose that $m,n \in \omega$, $\alpha < \beta$ are such that $\beta + n < \delta$, $\phi \in \Phi^{m}_{\beta + n}$ and $\psi \in \Phi^{m+n}_{\alpha} \cap F(\phi)$. Then there exists a $\psi' \in V_{\alpha, \beta}^{-1}[\{\psi\}] \cap F(\phi)$.
\end{prop}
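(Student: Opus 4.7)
The plan is to invoke Proposition \ref{qlem} twice, using it once to extract a ``top-level'' witness for the membership $\psi \in F(\phi)$, and then once in reverse to certify that the appropriate vertical projection of that witness is the desired $\psi'$.

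Concretely, I first note that by hypothesis $\alpha < \beta$ and $\beta + n < \delta$, so $\alpha + n \leq \beta + n$ and Proposition \ref{qlem} applies with $\beta + n$ in place of $\beta$. Since $\psi \in \Phi^{m+n}_{\alpha} \cap F(\phi)$ and $\phi \in \Phi^{m}_{\beta+n}$, this gives a formula $\theta \in \Phi^{m+n}_{\beta + n}$ such that
\[
H^{m+n}_{\beta + n}(\theta, i_{m}) = \phi \quad \text{and} \quad V_{\alpha, \beta + n}(\theta) = \psi.
\]
Now set $\psi' := V_{\beta, \beta + n}(\theta)$. By Remark \ref{vone}, $\psi' \in \Phi^{m+n}_{\beta}$, and by Remark \ref{vdir},
\[
V_{\alpha, \beta}(\psi') = V_{\alpha, \beta}(V_{\beta, \beta + n}(\theta)) = V_{\alpha, \beta + n}(\theta) = \psi,
\]
so $\psi' \in V_{\alpha, \beta}^{-1}[\{\psi\}]$.

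It remains to confirm $\psi' \in F(\phi)$. Since $\psi' \in \Phi^{m+n}_{\beta}$ and $\beta + n \leq \beta + n$, Proposition \ref{qlem} (now with $\alpha$ replaced by $\beta$ and $\beta$ replaced by $\beta+n$) reduces this to producing a formula $\theta' \in \Phi^{m+n}_{\beta + n}$ with $H^{m+n}_{\beta + n}(\theta', i_{m}) = \phi$ and $V_{\beta, \beta + n}(\theta') = \psi'$. The same $\theta$ already witnesses both equalities (the first by construction, the second by the definition of $\psi'$), completing the proof. There is no real obstacle here; the crux is simply recognizing that Proposition \ref{qlem} lets one toggle between ``$\psi \in F(\phi)$'' and ``$\psi$ is the vertical projection of some horizontal extension of $\phi$,'' so vertical projections of the horizontal extension witness automatically remain in $F(\phi)$.
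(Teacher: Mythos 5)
Your proof is correct and is essentially the paper's own argument: both apply Proposition \ref{qlem} once to obtain the witness $\theta \in \Phi^{m+n}_{\beta+n}$ and once more (at level $\beta$ versus $\beta+n$) to conclude that $V_{\beta,\beta+n}(\theta) \in F(\phi)$, with the identity $V_{\alpha,\beta}(V_{\beta,\beta+n}(\theta)) = \psi$ following from Remark \ref{vdir}. No issues.
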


\begin{proof}
  By Proposition \ref{qlem}, there is a $\theta \in \Phi^{m+n}_{\beta + n}$ such that $V_{\alpha, \beta + n}(\theta) = \psi$ and $H^{m+n}_{\beta + n}(\theta, i_{m}) = \phi$. By Proposition \ref{qlem} again, $V_{\beta, \beta + n}(\theta) \in F(\phi)$.
\end{proof}

A consequence of the following proposition is that every formula in a limit level of a Scott process determines the entire process below that level (note that $F(\phi)$ depends only on $\phi$). This fact is used in Remark \ref{love}.

\begin{prop}\label{fdetermine}
  Suppose that $\langle \Phi_{\alpha} : \alpha < \beta \rangle$ is a Scott process. Fix $\alpha_{0} < \beta$, $n \in \omega$ and $\phi \in \Phi^{n}_{\alpha_{0}}$. Then for each $\alpha < \beta$ and $m \in \omega$ such that $\alpha + m\leq \alpha_{0}$, the set $\Phi^{m}_{\alpha}$ is equal to $\{ H^{n}_{\alpha}(\psi, f) : \psi \in F(\phi) \cap \Phi^{n+m}_{\alpha}, f \in \cI_{m,n+m}\}$.
\end{prop}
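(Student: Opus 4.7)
The plan is to establish the two inclusions of the asserted equality separately. The $\supseteq$ inclusion is immediate from the horizontal closure properties already in the definition of a Scott process: for any $\psi \in F(\phi) \cap \Phi^{n+m}_{\alpha}$ and any $f \in \cI_{m,n+m}$, Remark \ref{jandi} (i.e.\ conditions (\ref{htwozero}) and (\ref{htwo}) of Definition \ref{prodef} combined) gives that $H^{n+m}_{\alpha}(\psi, f) \in \Phi^{m}_{\alpha}$, so all elements of the displayed set on the right lie in $\Phi^{m}_{\alpha}$.

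For the $\subseteq$ inclusion, fix $\rho \in \Phi^{m}_{\alpha}$. I would first lift $\rho$ up to level $\alpha_{0}$: by condition (\ref{vfive}) of Definition \ref{prodef}, there is some $\sigma \in \Phi^{m}_{\alpha_{0}}$ with $V_{\alpha, \alpha_{0}}(\sigma) = \rho$. Next, I would amalgamate $\phi$ and $\sigma$ at level $\alpha_{0}$: applying condition (\ref{combine}) of Definition \ref{prodef} to $\phi \in \Phi^{n}_{\alpha_{0}}$ and $\sigma \in \Phi^{m}_{\alpha_{0}}$ yields a formula $\theta' \in \Phi^{n+m}_{\alpha_{0}}$ and a function $j' \in \cF_{m, n+m}$ such that $\phi = H^{n+m}_{\alpha_{0}}(\theta', i_{n})$ and $\sigma = H^{n+m}_{\alpha_{0}}(\theta', j')$. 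Setting $\theta = V_{\alpha, \alpha_{0}}(\theta')$, it remains to verify that $\theta$ serves as the desired witness: the commutativity of $V$ and $H$ (Proposition \ref{proppartone}) gives
\[
  H^{n+m}_{\alpha}(\theta, j') \;=\; V_{\alpha, \alpha_{0}}(H^{n+m}_{\alpha_{0}}(\theta', j')) \;=\; V_{\alpha, \alpha_{0}}(\sigma) \;=\; \rho.
\]

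The step requiring real care is membership of $\theta$ in $F(\phi)$, and this is where the hypothesis $\alpha + m \leq \alpha_{0}$ is used. Here I would invoke Proposition \ref{qlem}: since $\theta' \in \Phi^{n+m}_{\alpha_{0}}$ satisfies $H^{n+m}_{\alpha_{0}}(\theta', i_{n}) = \phi$ and $V_{\alpha, \alpha_{0}}(\theta') = \theta$, and since the arity excess between $\theta$ and $\phi$ is exactly $m$, the hypothesis of Proposition \ref{qlem} (with roles $m \leftrightarrow n$, $n \leftrightarrow m$, $\beta \leftrightarrow \alpha_{0}$) is met, and we conclude $\theta \in F(\phi) \cap \Phi^{n+m}_{\alpha}$. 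This is essentially the one nontrivial point; everything else is bookkeeping.

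I do not expect a substantial obstacle beyond identifying the right order of operations — namely, lift $\rho$ before amalgamating, rather than trying to amalgamate $V_{\alpha, \alpha_{0}}(\phi)$ with $\rho$ directly at level $\alpha$ and then lift the amalgamated formula (which would leave one needing to match the lifted horizontal restriction to $\phi$ itself). Performing the amalgamation first at the top level $\alpha_{0}$ builds in the required compatibility with $\phi$ automatically, and pushing down with $V_{\alpha, \alpha_{0}}$ then delivers both the required projection equality and membership in $F(\phi)$ in one stroke via Propositions \ref{proppartone} and \ref{qlem}.
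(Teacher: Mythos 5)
Your proof is correct and follows essentially the same route as the paper's: lift the given formula from level $\alpha$ to level $\alpha_{0}$ using condition (\ref{vfive}) of Definition \ref{prodef}, amalgamate with $\phi$ at level $\alpha_{0}$ using condition (\ref{combine}), and project back down, with Proposition \ref{proppartone} supplying the projection identity and Proposition \ref{qlem} (via the hypothesis $\alpha + m \leq \alpha_{0}$) supplying membership in $F(\phi)$. The only difference is that you also spell out the easy $\supseteq$ inclusion, which the paper leaves implicit.
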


\begin{proof}
  Let $\psi$ be a member of $\Phi^{m}_{\alpha}$. By condition (\ref{vfive}) of Definition \ref{prodef}, there is a $\psi' \in \Phi^{m}_{\alpha_{0}}$ such that $V_{\alpha, \alpha_{0}}(\psi') = \psi$. By condition (\ref{combine}) of Definition \ref{prodef}, there exist $f \in \cI_{m,n+m}$ and $\theta \in \Phi^{n+m}_{\alpha_{0}}$ such that $H^{n+m}_{\alpha_{0}}(\theta, i_{n}) = \phi$ and $H^{n+m}_{\alpha_{0}}(\theta, f) = \psi'$. Then $V_{\alpha, \alpha_{0}}(\theta)$ is in $F(\phi)$ by Proposition \ref{qlem}, and is as desired by Proposition \ref{proppartone}.
\end{proof}



The following proposition shows that members of $F(\phi)$ can be combined, in suitable situations.

\begin{prop}\label{qext} For all
  $m, n, p \in \omega$,
  all $\alpha, \beta < \delta$ such that $\beta \geq \alpha + n + p$, and all
  $\phi \in \Phi^{m}_{\beta}$,
  $\psi \in \Phi^{m+n}_{\alpha} \cap F(\phi)$ and
  $\theta \in \Phi^{m + p}_{\alpha} \cap F(\phi)$,
  there exist
  $j \in \cF_{m+p,m+n+p}$ and
  $\rho \in \Phi^{m+n + p}_{\alpha} \cap F(\phi)$
  such that
  \begin{itemize}
  \item $j \circ i_{m} = i_{m}$;
  \item $H_{\alpha}(\rho, i_{m+n}) = \psi$;
  \item $H_{\alpha}(\rho, j) = \theta$.
  \end{itemize}
\end{prop}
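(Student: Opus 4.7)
The plan is to argue by induction on $n$, with all other parameters (the arities $m$ and $p$, the ordinals $\alpha\leq\beta$, the formula $\phi$, and the particular Scott process) left free. For the base case $n=0$, membership of $\psi$ in $F(\phi)$ via the length-$0$ chain of Definition \ref{qdef} forces $\psi=V_{\alpha,\beta}(\phi)$; taking $\rho:=\theta$ and $j:=i_{m+p}$ then works, since the horizontal projection property of members of $F(\phi)$ (the unlabeled proposition preceding Proposition \ref{qlift}) gives $H^{m+p}_{\alpha}(\theta, i_{m})=V_{\alpha,\beta}(\phi)=\psi$, while Remark \ref{hidrem} gives $H^{m+p}_{\alpha}(\theta,i_{m+p})=\theta$, and $i_{m+p}\circ i_{m}=i_{m}$ is immediate.

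For the inductive step, given $\psi\in\Phi^{m+n+1}_{\alpha}\cap F(\phi)$, set $\psi':=H^{m+n+1}_{\alpha}(\psi,i_{m+n})\in\Phi^{m+n}_{\alpha}$ and observe that $\psi'\in F(\phi)$: lifting $\psi$ to $\theta_{1}\in\Phi^{m+n+1}_{\beta}$ with $H(\theta_{1},i_{m})=\phi$ and $V(\theta_{1})=\psi$ via Proposition \ref{qlem}, the formula $H(\theta_{1},i_{m+n})$ lies in $\Phi^{m+n}_{\beta}$ with $H(\cdot,i_{m})=\phi$ and $V(\cdot)=\psi'$, certifying $\psi'\in F(\phi)$ by a second application of Proposition \ref{qlem}. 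Applying the induction hypothesis to $\psi'$ and $\theta$ yields $j''\in\cF_{m+p,m+n+p}$ with $j''\circ i_{m}=i_{m}$ and $\rho''\in\Phi^{m+n+p}_{\alpha}\cap F(\phi)$ satisfying $H(\rho'',i_{m+n})=\psi'$ and $H(\rho'',j'')=\theta$. Letting $j\in\cF_{m+p,m+n+p+1}$ be the natural extension of $j''$ through the inclusion $X_{m+n+p}\subseteq X_{m+n+p+1}$, we seek $\rho\in\Phi^{m+n+p+1}_{\alpha}\cap F(\phi)$ with $H(\rho,i_{m+n+1})=\psi$ and $H(\rho,j)=\theta$.

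To produce $\rho$, I would work at level $\beta$: lift $\rho''$ to $\rho''^{*}\in\Phi^{m+n+p}_{\beta}$ via Proposition \ref{qlem}. The chain witnessing $\psi\in F(\phi)$ provides some $\psi_{1}\in\Phi^{m+n}_{\alpha+1}$ with $\psi\in E(\psi_{1})$ and $V_{\alpha,\alpha+1}(\psi_{1})=\psi'$. Combining Propositions \ref{succcase} and \ref{proppartfour}, as $\rho^{*}$ ranges over extensions of $\rho''^{*}$ in $\Phi^{m+n+p+1}_{\beta}$ with $H(\rho^{*},i_{m+n+p})=\rho''^{*}$, the projection $V_{\alpha,\beta}(H(\rho^{*},i_{m+n+1}))$ ranges over the single-point extensions of $\psi'$ sitting above the lift of $\psi'$ at level $\alpha+1$ induced by $\rho''^{*}$. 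The goal is to arrange that some such $\rho^{*}$ produces $\psi$ on the first $m+n+1$ coordinates; the projection via $j$ then automatically gives $\theta$, since $j$ agrees with $j''$ on $X_{m+p}$ and $H(\rho''^{*},j'')$ projects under $V$ to $\theta$.

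The crucial point, and the expected main obstacle, is ensuring that the lift $V_{\alpha+1,\beta}(H(\rho''^{*},i_{m+n}))$ of $\psi'$ induced by $\rho''^{*}$ coincides with the specific $\psi_{1}$ for which $\psi\in E(\psi_{1})$. A cleaner alternative, which sidesteps the fine-tuning of lifts at level $\beta$, is to begin the inductive step by using Proposition \ref{qlift} to lift $\theta$ to some $\theta^{\dagger}\in\Phi^{m+p}_{\alpha+1}\cap F(\phi)$ with $V(\theta^{\dagger})=\theta$, apply the induction hypothesis at level $\alpha+1$ to $\psi_{1}$ and $\theta^{\dagger}$ to obtain a combined formula $\rho^{\dagger}\in\Phi^{m+n+p+1}_{\alpha+1}\cap F(\phi)$, and then recover $\rho=V_{\alpha,\alpha+1}(\rho^{\dagger})$ after verifying via condition (\ref{ppath}) of Definition \ref{prodef} that $\psi$ indeed appears as the projection $V_{\alpha,\alpha+1}$ of the appropriate first-$m+n+1$ trace of $\rho^{\dagger}$. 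This variant keeps the induction cleanly organized on $n$ while turning the amalgamation into one at level $\alpha+1$ where the requisite lifts are already matched.
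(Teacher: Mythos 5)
Your final variant is correct and is essentially the paper's own proof with the roles of $\psi$ and $\theta$ interchanged: the paper inducts on $p$, peeling one coordinate off $\theta$ via its $E$-chain, lifting the other formula one level to $\alpha+1$ (via Proposition \ref{qlem}/\ref{qlift}), applying the induction hypothesis at level $\alpha+1$, and then descending to level $\alpha$ through condition (\ref{projfact}) of Definition \ref{hdef}, which is exactly what your ``cleaner alternative'' does with $n$ in place of $p$. Your first sketch --- amalgamating $\psi'$ and $\theta$ at level $\alpha$ and then trying to realize the prescribed one-point extension $\psi$ of $\psi'$ directly at that level --- would indeed require an amalgamation property that the levels of a general Scott process need not satisfy, so you were right to identify that obstacle and discard that route in favor of working at level $\alpha+1$, where the chain witnessing $\psi\in F(\phi)$ hands you the matched lift $\psi_{1}$ for free.
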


\begin{proof}
  This can be proved by induction on $p$, for all $m$ and $n$ simultaneously. In the case where $p = 0$ there is nothing to show, so suppose that $p$ is positive. Since $\theta \in F(\phi)$, there is a $\theta' \in \Phi^{m+p-1}_{\alpha + 1}$ such that $\theta \in E(\theta')$.
  By Proposition \ref{qext}, there is a $\psi' \in \Phi^{m+n}_{\alpha + 1}$ such that $V_{\alpha, \alpha + 1}(\psi') = \psi$ and $H^{m+n}_{\alpha + 1}(\psi', i_{m}) = V_{\alpha + 1, \beta}(\phi)$. Let $\rho' \in \Phi^{m + p + p-1}_{\alpha + 1}$ be the result of applying the induction hypothesis to $\psi'$ and $\theta'$. Since $\theta \in E(\theta')$, the desired $\rho$ can be found in $E(\rho')$ by applying condition (\ref{projfact}) of Definition \ref{hdef}.
\end{proof}

The following proposition is not used in this paper. It does, however, illustrate a ways in which the function $F$ acts as expected.
The proposition follows immediately from Propositions \ref{proppartone} and \ref{qlem}.

\begin{prop}\label{qhdown}
  Suppose that $\alpha < \beta < \delta$, $m,n \in \omega$, $\phi \in \Phi^{m}_{\beta}$ and $\psi \in \Phi^{m+n}_{\alpha}$ are such that
  $\alpha + n \leq \beta$ and $\psi \in F(\phi)$. Fix $p \in [m,m+n]$ and let $j \in \cF_{p,m+n}$ be such that $j \restrict X_{m} = i_{m}$. Then $H^{m+n}_{\alpha}(\psi, j) \in F(\phi)$.
\end{prop}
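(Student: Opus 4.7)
The plan is to invoke Proposition \ref{qlem} twice, using it first to extract a witness at level $\beta$ for the hypothesis $\psi \in F(\phi)$, and then again, after horizontally projecting that witness, to certify that $H^{m+n}_{\alpha}(\psi, j) \in F(\phi)$. The key observation is that $H^{m+n}_{\alpha}(\psi,j)$ lies in $\Phi^{p}_{\alpha}$ with $p \in [m, m+n]$, so writing $n' = p - m$ we have $\alpha + n' \leq \alpha + n \leq \beta$, which is exactly the arithmetic requirement needed to apply Proposition \ref{qlem} with the pair $(\phi, H^{m+n}_{\alpha}(\psi, j))$.

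First I would apply Proposition \ref{qlem} in one direction to $\psi \in \Phi^{m+n}_{\alpha} \cap F(\phi)$: this produces a $\theta \in \Phi^{m+n}_{\beta}$ satisfying $H^{m+n}_{\beta}(\theta, i_{m}) = \phi$ and $V_{\alpha, \beta}(\theta) = \psi$. Then I would set $\theta' := H^{m+n}_{\beta}(\theta, j) \in \Phi^{p}_{\beta}$, and verify the two properties that make $\theta'$ a valid witness for Proposition \ref{qlem} applied to $(\phi, H^{m+n}_{\alpha}(\psi,j))$.

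For the first property, using part (\ref{hcompose}) of Remark \ref{tworems} and the hypothesis $j \restrict X_{m} = i_{m}$ (so that $j \circ i_{m} = i_{m}$ as an element of $\cF_{m, m+n}$), we get
\[
H^{p}_{\beta}(\theta', i_{m}) = H^{p}_{\beta}(H^{m+n}_{\beta}(\theta, j), i_{m}) = H^{m+n}_{\beta}(\theta, j \circ i_{m}) = H^{m+n}_{\beta}(\theta, i_{m}) = \phi.
\]
For the second property, Proposition \ref{proppartone} (commutativity of horizontal and vertical projections) gives
\[
V_{\alpha, \beta}(\theta') = V_{\alpha, \beta}(H^{m+n}_{\beta}(\theta, j)) = H^{m+n}_{\alpha}(V_{\alpha, \beta}(\theta), j) = H^{m+n}_{\alpha}(\psi, j).
\]
Applying Proposition \ref{qlem} in the other direction to $\theta'$ yields $H^{m+n}_{\alpha}(\psi, j) \in F(\phi)$, as desired.

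There is no real obstacle here; the proof is a direct bookkeeping argument, and the only mild subtlety is making sure that the identification $j \circ i_{m} = i_{m}$ is legitimate, which is immediate from the assumption $j \restrict X_{m} = i_{m}$ together with the convention that $i_{m}$ denotes the inclusion of $X_{m}$ into whichever $X_{q}$ is appropriate from context.
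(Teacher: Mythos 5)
Your proof is correct and matches the paper's intended argument exactly: the paper offers no written proof beyond the remark that the proposition ``follows immediately from Propositions \ref{proppartone} and \ref{qlem},'' and your two applications of Proposition \ref{qlem} (with the horizontal projection $\theta' = H^{m+n}_{\beta}(\theta, j)$ in between, justified by part (\ref{hcompose}) of Remark \ref{tworems} and Proposition \ref{proppartone}) are precisely the details being elided. The arithmetic check that $\alpha + (p-m) \leq \beta$ is the right thing to verify, and your handling of $j \circ i_{m} = i_{m}$ is fine.
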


\section{Extending a process of limit length}\label{ellsec}


%


\begin{df} Given a limit ordinal $\beta$ and a sequence $\langle \Phi_{\alpha} : \alpha < \beta \rangle$ such that each $\Phi_{\alpha}$ is a subset of $\Psi_{\alpha}$, a \emph{path through} $\langle \Phi_{\alpha} : \alpha < \beta \rangle$ is a formula $\phi$ in $\Psi_{\beta}$ such that $V_{\alpha, \beta}(\phi) \in \Phi_{\alpha}$ for each $\alpha < \beta$.
\end{df}

\begin{remark} For each limit ordinal $\alpha$, $\Psi_{\alpha}$ is the set of paths through the sequence $\langle \Psi_{\beta} : \beta < \alpha \rangle$.
\end{remark}

\begin{df}\label{minimalsetdef} Let $\beta$ be a limit ordinal $\beta$ and let $\langle \Phi_{\alpha} : \alpha < \beta \rangle$ be such that each $\Phi_{\alpha}$ is a subset of $\Psi_{\alpha}$. Let $\phi$ be a path through $\langle \Phi_{\alpha} : \alpha < \beta \rangle$, and let $n \in \omega$ be such that $\phi \in \Psi^{n}_{\beta}$. The \emph{minimal set} of $\phi$ according to $\langle \Phi_{\alpha} : \alpha < \beta \rangle$ is the set of paths $\upsilon$ through $\langle \Phi_{\alpha} : \alpha <\beta \rangle$ for which there exist
\begin{itemize}
\item $m \in \omega \setminus n$;
\item $p \in m + 1$;
\item $\alpha_{0} < \beta$;
\item $\psi_{0} \in \Phi^{m}_{\alpha_{0}} \cap F(\phi)$;
\item $f \in \cI_{p,m}$;
\end{itemize}
such that for all $\alpha \in [\alpha_{0}, \beta)$ and all $\psi \in \Phi^{m}_{\alpha} \cap F(\phi)$ such that $V_{\alpha_{0}, \alpha}(\psi) = \psi_{0}$,
$H^{m}_{\alpha}(\psi, f) = V_{\alpha, \beta}(\upsilon)$.
\end{df}

\begin{remark}
  The conclusion of the Definition \ref{minimalsetdef} can equivalently be replaced by ``such that for all $\psi \in \Psi_{\beta}$ such that $V_{\alpha_{0}, \beta}(\psi) = \psi_{0}$ and $H^{m}_{\beta}(\psi, i_{n}) = \phi$, $H^{m}_{\beta}(\psi, f) = \upsilon$."
\end{remark}

\begin{remark} Let $p \leq n$ be elements of $\omega$, let $f$ be an element of $\cI_{p,n}$, let $\beta$ be a limit ordinal, and let $\phi$ and $\phi$ be paths through a Scott process $\langle \Phi_{\alpha} : \alpha < \beta \rangle$, with $\psi$ in the minimal set of $\phi$ (with respect to this Scott process) and  $\psi \in \Psi^{n}_{\beta}$. Then $H^{n}_{\beta}(\psi, f)$ is an element of the minimal set of $\phi$.
\end{remark}

\begin{remark}\label{weakminimal setrem}
  Let the \emph{weakly minimal set} of a formula $\phi$ (in the context of Definition \ref{minimalsetdef}) be the set of formulas $\upsilon \in \Psi^{p}_{\beta}$ for which
  membership in the minimal set of $\phi$ is witnessed with $f = i_{p}$. One obtains an equivalent definition of the minimal set of $\phi$ by taking the closure of the weakly minimal set under permutations of free variables (i.e., including all formulas of the form $H^{p}_{\beta}(\upsilon, f)$, where $\upsilon \in \Psi^{p}_{\beta}$ is in the weakly minimal set of $\phi$ and $f$ is in $\cI_{p,p}$. This follows from the second part of Remark \ref{exactfree}, and condition (\ref{htwozero}) of Definition \ref{prodef}.
\end{remark}


\begin{remark}\label{love} Suppose that $\beta$ is a limit ordinal, $\langle \Phi_{\alpha} : \alpha \leq \beta \rangle$ is a Scott process and $\rho$ is an element of
$\Phi_{\beta}$. Then every member of the minimal set of $\rho$ according to $\langle \Phi_{\alpha} : \alpha < \beta \rangle$ is a member of $\Phi_{\beta}$. This follows from
Proposition \ref{qlem}. Furthermore, by Proposition \ref{fdetermine}, $\rho$ determines $\langle \Phi_{\alpha} : \alpha < \beta \rangle$, so the expression ``according to $\langle \Phi_{\alpha} : \alpha < \beta \rangle$" is (in this case, where $\phi$ is part of a Scott process) unnecessary.
\end{remark}

We write $\ms(\phi)$ for the minimal set of $\phi$.

\begin{thrm}\label{limitext}
   Suppose that $\delta$ is a limit ordinal of countable cofinality and $\langle \Phi_{\alpha}  :\alpha  < \delta\rangle$ is a Scott process such that each $\Phi_{\alpha}$ is countable. Let $\rho$ be a path through $\langle \Phi_{\alpha} : \alpha < \delta \rangle$.  Then there exists a countable $\Phi_{\delta} \subseteq \Psi_{\delta}$ such that $\rho \in \Phi_{\delta}$ and $\langle \Phi_{\alpha}  :\alpha  \leq \delta \rangle$ is a Scott process.

 Furthermore, if $\Upsilon$ is a countable subset of $\Psi_{\delta}$ disjoint from $\ms(\rho)$, $\Phi_{\delta}$ can be chosen to be disjoint from $\Upsilon$.
\end{thrm}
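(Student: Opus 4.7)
The plan is to build $\Phi_\delta$ as a countable union $\Phi_\delta = \bigcup_{k<\omega} T_k$, with $T_0 = \{\rho\}$ and each $T_{k+1}$ addressing one of countably many bookkeeping tasks. Fix a cofinal $\omega$-sequence $\alpha_0 < \alpha_1 < \cdots$ with supremum $\delta$ and enumerate four families of tasks: (a)~surjectivity $V_{\alpha_k, \delta}[\Phi_\delta] \supseteq \Phi_{\alpha_k}$ (condition (\ref{vfive})); (b)~closure of $\Phi_\delta$ under $H^n_\delta(\cdot, j)$ for every $j \in \cF_{m,n}$ (conditions (\ref{htwozero}) and (\ref{htwo})); (c)~for each already-added $\phi \in T_k^n$ and each $\psi \in E(V_{\alpha_k+1, \delta}(\phi))$, a path $\psi' \in T_{k+1}^{n+1}$ with $V_{\alpha_k, \delta}(\psi') = \psi$ and $H^{n+1}_\delta(\psi', i_n) = \phi$ (condition (\ref{ppath})); and (d)~for each previously-added pair, an amalgamating path (condition (\ref{combine})). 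Restricting task (c) to the cofinal sequence of levels $\alpha_k$ is legitimate because Propositions \ref{proppartthree} and \ref{succcase} derive condition (\ref{ppath}) at lower $\alpha$'s from its instance at a single higher $\alpha_k$, combined with the existing Scott process below $\delta$.

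Each task is realized by dependent choice along $\langle \alpha_k \rangle$. For case (c), the seed at level $\alpha_k$ is $\psi$ itself, and Proposition \ref{qlift} repeatedly lifts the current choice $\psi'_{\alpha_j}$ to a compatible $\psi'_{\alpha_{j+1}} \in \Phi^{n+1}_{\alpha_{j+1}} \cap F(V_{\alpha_{j+1}+1,\delta}(\phi))$, with the required horizontal projection $H^{n+1}_{\alpha_{j+1}}(\psi'_{\alpha_{j+1}}, i_n) = V_{\alpha_{j+1}, \delta}(\phi)$ guaranteed by Propositions \ref{proppartone} and \ref{qlem}; the conjunction of the $\psi'_{\alpha_j}$'s is the desired path. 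Tasks (a) and (d) are handled similarly, using condition (\ref{vfive}) and condition (\ref{combine}) at each level respectively, together with a finite-branching pigeonhole on $\cF_{m,n+m}$ for (d). The resulting $\Phi_\delta$ is countable, contains $\rho$, and verifying the Scott-process conditions at level $\delta$ is routine.

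For the furthermore clause, enumerate $\Upsilon = \{\upsilon_k : k<\omega\}$ and add at each stage the further obligation that the new path $\psi'$ satisfies $\upsilon_k \notin \ms(\psi')$; since Remark \ref{love} forces $\ms(\psi') \subseteq \Phi_\delta$ and the hypothesis gives $\ms(\rho) \cap \Upsilon = \emptyset$, this ensures $\Phi_\delta \cap \Upsilon = \emptyset$. Unpacking Definition \ref{minimalsetdef}, $\upsilon_k \notin \ms(\psi')$ is a countable conjunction over finite data $(m, p, \alpha_0, \psi_0, f)$ of the statement that there exists $\alpha \geq \alpha_0$ and $\psi \in \Phi_\alpha^m \cap F(\psi')$ extending $\psi_0$ with $H^m_\alpha(\psi, f) \neq V_{\alpha,\delta}(\upsilon_k)$. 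I would meet each such instance by diagonalizing at an appropriately high level $\alpha_j$ during the DC construction of $\psi'$, using that $\upsilon_k \notin \ms(\rho)$ already supplies such alternatives relative to $\rho$, and that these transfer to $\psi'$ via Proposition \ref{qlift}, Remark \ref{flat}, and Proposition \ref{qhdown}.

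I expect this diagonalization to be the main obstacle. After the projection constraints of the current task have been imposed on $\psi'$ at each stage, one must verify that a genuine alternative distinguishing $\psi'$ from $\upsilon_k$ still remains available in $\Phi_{\alpha_j}$; this requires careful combinatorial bookkeeping to track how the flexibility inherited from ``$\upsilon_k \notin \ms(\rho)$'' survives the horizontal projections and $F$-extensions dictated by the main task, and to show that it is preserved rather than extinguished at each stage of the DC construction.
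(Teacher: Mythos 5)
Your overall architecture---enumerate countably many tasks, meet each by a dependent-choice construction along a cofinal $\omega$-sequence in $\delta$, and use Proposition \ref{qlift} to keep the recursion from getting stuck---is in the right spirit, and your treatment of tasks (a) and (c) for a single target formula is essentially the paper's argument for conditions (\ref{vfive}) and (\ref{ppath}). The genuine gap is task (d). You add paths to $\Phi_{\delta}$ independently, one per task, and then propose to amalgamate each previously added pair $\phi, \psi$ afterwards by invoking condition (\ref{combine}) level by level plus a pigeonhole on $\cF_{m,n+m}$. For a fixed $j$, the sets $A_{\alpha} = \{\theta \in \Phi^{n+m}_{\alpha} : H^{n+m}_{\alpha}(\theta, i_{n}) = V_{\alpha,\delta}(\phi),\ H^{n+m}_{\alpha}(\theta, j) = V_{\alpha,\delta}(\psi)\}$ do form an inverse system under the maps $V_{\alpha,\beta}$ (by Proposition \ref{proppartone}), and pigeonhole gives a $j$ with every $A_{\alpha}$ nonempty; but the bonding maps need not be surjective---condition (\ref{vfive}) lifts $\theta_{k}$ to \emph{some} $V$-preimage at the next level, and condition (\ref{combine}) gives \emph{some} amalgamator at the next level, but nothing in the paper produces a single formula doing both for two unrelated paths---and an inverse limit of nonempty countable sets over $\omega$ with non-surjective maps can be empty, so your recursion for (d) can get stuck. (Compare Remarks \ref{noomega2} and \ref{addthisrem}, which describe exactly this kind of failure of level-by-level amalgamation to cohere.) The paper sidesteps the problem by never creating independent paths: it builds a single coherent chain $\langle \phi_{n} : n \in \omega \rangle$ with $H^{n}_{\delta}(\phi_{n}, i_{m}) = \phi_{m}$, keeping all approximations $\theta_{p}$ inside $F(\rho)$ so that Proposition \ref{qlift} always applies, and takes $\Phi_{\delta}$ to be the closure of this chain under horizontal projections. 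Conditions (\ref{htwozero}), (\ref{htwo}) and (\ref{combine}) then hold automatically, because any two members of $\Phi_{\delta}$ are projections of a common $\phi_{p}$; only (\ref{vfive}) and (\ref{ppath}) need bookkeeping, and each instance is folded into the chain by applying condition (\ref{combine}) at a sufficiently high level below $\delta$.

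The furthermore clause is also incomplete, as you acknowledge. Requiring $\upsilon_{k} \notin \ms(\psi')$ for every added path $\psi'$ is far stronger than the needed conclusion $\psi' \neq \upsilon_{k}$, and imposes countably many constraints per path per $k$ that you do not show are compatible with the projection constraints of the main task; your appeal to Remark \ref{love} runs in the wrong direction, since $\ms(\psi') \subseteq \Phi_{\delta}$ is a consequence of the finished object rather than a tool for excluding formulas. In the single-chain setup the avoidance is a one-step diagonalization: every $m$-ary member of $\Phi_{\delta}$ is $H^{n_{p}}_{\delta}(\phi_{n_{p}}, f)$ for one of countably many pairs $(p, f)$, and for each such pair and each $\upsilon \in \Upsilon$ the failure of the defining condition of $\ms(\rho)$ at the witness given by $\theta_{p}$ and $f$ supplies, at some level below $\delta$, an extension $\theta_{p+1} \in F(\rho)$ of $\theta_{p}$ whose $f$-projection already differs from the corresponding projection of $\upsilon$; this single choice permanently separates $H^{n_{p}}_{\delta}(\phi_{n_{p}}, f)$ from $\upsilon$, so no preservation argument across later stages is required.
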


\begin{proof}
In order to make $\langle \Phi_{\alpha}  :\alpha  \leq \delta \rangle$ a Scott process, we need to pick $\Phi_{\delta}$ so that conditions (\ref{vfive}), (\ref{htwozero}), (\ref{htwo}), (\ref{ppath}) and (\ref{combine}) of Definition \ref{prodef} are satisfied.
Let $\langle \gamma^{0}_{p} : p < \omega \rangle$ be an increasing sequence cofinal in $\delta$. We will recursively pick formulas $\theta_{p}$ ($p < \omega$), a nondecreasing sequence of ordinals $\gamma_{p}$ ($p < \omega$) below $\delta$ and a nondecreasing unbounded sequence of integers $n_{p}$ ($p < \omega$) such that $\rho \in \Psi^{n_{0}}_{\delta}$ and such that, for each $p \in \omega$,
\begin{itemize}
\item $\gamma_{p} \geq \gamma^{0}_{p}$;
\item $\theta_{p} \in \Phi^{n_{p}}_{\gamma_{p}} \cap F(\rho)$;
\item $H^{n_{p+1}}_{\gamma_{p}}(V_{\gamma_{p},\gamma_{p+1}}(\theta_{p+1}), i_{n_{p}}) = \theta_{p}$.
\end{itemize}
The first of these conditions implies that $\theta_{0} = V_{\gamma_{0},\delta}(\rho)$.

Having chosen the $\theta_{p}$'s, for each $n\in \omega$ we let $\phi_{n}$ be the path through $\langle \Phi_{\alpha} : \alpha < \delta \rangle$ determined by
$\{ H^{n_{p}}_{\gamma_{p}}(\theta_{p}, i_{n}) \mid p \in \omega,\, n_{p} \geq n\}$.
Then for all $m \leq n \in \omega$ we will have that $\phi_{m} = H_{\delta}(\phi_{n}, i_{m})$, and we will let $$\Phi_{\delta} = \bigcup_{n < \omega}\{H^{n}_{\delta}(\phi_{n}, j) : m \leq n,\, j \in \cF_{m,n}\}.$$


This is enough to ensure that conditions (\ref{htwozero}), (\ref{htwo}) and (\ref{combine}) from Definition \ref{prodef} are met. For condition (\ref{htwozero}) this is immediate. For condition (\ref{htwo}), the right-to-left containment follow from condition (\ref{htwozero}). For the other direction, fix $m \leq n$ in $\omega$. An arbitrary formula $\psi \in \Phi^{m}_{\delta}$ has the form $H^{q}_{\delta}(\phi_{q}, j)$, for some $q \in \omega \setminus m$ and some $j \in \cF_{m, q}$. Since $\phi_{n} = H^{p}_{\delta}(\phi_{p}, i_{n})$ for all $p \geq n$ in $\omega$, we may assume that
$q \geq n$. Letting $j' \in \cF_{n,q}$ be such that $j \restrict X_{m} = j' \restrict X_{m}$, we have that $H^{q}_{\delta}(\phi_{q}, j') \in \Phi^{n}_{\delta}$, and that $\psi = H^{n}_{\delta}(H^{q}_{\delta}(\phi_{q}, j'), i_{m})$, by part (\ref{hcompose}) of Remark \ref{tworems}.


To see that condition (\ref{combine}) holds, fix $n,m \in \omega$, $\phi \in \Phi^{n}_{\delta}$ and $\psi \in \Phi^{m}_{\delta}$. Then there exist $p,q \in \omega$, $j \in \cF_{n,p}$ and $k \in \cF_{m,q}$ such that $\phi = H^{p}_{\delta}(\phi_{p}, j)$ and $\psi = H^{q}_{\delta}(\phi_{q}, k)$.
Since $$\phi_{p} = H^{\max\{p,q\}}_{\delta}(\phi_{\max\{p,q\}}, i_{p})$$ and $$\phi_{q} = H^{\max\{p,q\}}_{\delta}(\phi_{\max\{p,q\}}, i_{q}),$$ we may assume by part \ref{hcompose} of Remark \ref{tworems} that $p = q$.
Similarly, we may assume that $p \geq m + n$. Let $A$ be a subset of $X_{p}$ of size $m + n$ which contains the ranges of both $j$ and $k$. Let $j' \colon X_{m+n} \to A$ be a bijection such that $j = j' \circ i_{n}$. Then $$\phi = H^{p}_{\delta}(\phi_{p}, j' \circ i_{n}) =
H^{m+n}_{\delta}(H^{p}_{\delta}(\phi_{p}, j'),i_{n}),$$ by part \ref{hcompose} of Remark \ref{tworems}, and $H^{p}_{\delta}(\phi_{p}, j') \in \Phi_{\delta}$. Finally, let $k' \in \cF_{m,m+n}$ be such that $k = j' \circ k'$. Then $\psi = H^{m+n}_{\delta}(H^{p}_{\delta}(\phi_{p}, j'),k')$, as desired.

To complete the proof, we show how to choose the formulas $\theta_{p}$ so that conditions (\ref{vfive}) and (\ref{ppath}) of Definition \ref{prodef} are satisfied, and also so that no member of $\Upsilon$ is in $\Phi_{\delta}$. We let $\theta_{0} = V_{\gamma_{0}, \delta}(\rho)$, as above. Suppose that $p \in \omega$ is such that $\theta_{p}$ has been chosen, but $\theta_{p+1}$ has not.

To satisfy condition (\ref{vfive}), let $\gamma_{p+1}$ be the least member of $\{ \gamma^{0}_{q} : q \in \omega\}$ which is at least as big as both $\gamma_{p}$ and $\gamma^{0}_{p + 1}$, and suppose that $\psi$ is an element of $\Phi^{m}_{\alpha}$, for some $\alpha \leq \gamma_{p + 1}$ and some $m \in \omega$.
By Proposition \ref{qlem}, we can find a formula $\theta'_{p} \in \Phi^{n_{p}}_{\gamma_{p+1} + m + n_{p}}$ such that $V_{\gamma_{p},\gamma_{p+1}+m+n_{p}}(\theta'_{p}) = \theta_{p}$ and $$H^{n_{p}}_{\gamma_{p+1} + m + n_{p}}(\theta'_{p}, i_{n_{0}}) = V_{\gamma_{p+1}+m+n_{p}, \delta}(\rho).$$ By condition (\ref{vfive}), there is a $\psi' \in \Phi^{m}_{\gamma_{p+1} + m + n_{p}}$ such that $V_{\alpha, \gamma_{p+1}+m+n_{p}}(\psi') = \psi$.
Applying condition (\ref{combine}) of Definition \ref{prodef},
we can choose $\theta''_{p} \in \Phi^{m+n_{p}}_{\gamma_{p + 1} + m + n_{p}}$ and $j \in \cF_{m,n_{p} + m}$ such that $$H^{m + n_{p}}_{\gamma_{p + 1} + m + n_{p}}(\theta''_{p}, i_{n_{p}}) = \theta'_{p}$$ and $$H^{m+n_{p}}_{\gamma_{p+1} + m + n_{p}}(\theta''_{p},j) = \psi'.$$ Then $\theta_{p+1} =
V_{\gamma_{p+1}, \gamma_{p + 1} + m + n_{p}}(\theta''_{p})$ is as desired, by Propositions \ref{proppartone} and \ref{qlem}.



To satisfy condition (\ref{ppath}), suppose that we have $m \leq n_{p}$
and $\alpha < \gamma_{p}$. We can represent an
arbitrary  $\phi \in \Phi^{m}_{\delta}$ as $H^{n_{p}}_{\delta}(\phi_{p}, j)$ for some $j \in \cF_{m,n_p}$, in which case $V_{\alpha+1,\delta}(\phi)$ will
be $V_{\alpha + 1,\gamma_{p}}(H^{n_{p}}_{\gamma_{p}}(\theta_{p}, j))$. So it suffices to fix such a $j$ and a formula $\psi$ in
$E(V_{\alpha + 1, \gamma_{p}}(H^{n_{p}}_{\gamma_{p}}(\theta_{p}, j)))$. By Proposition \ref{qlift}, it suffices to find a $\theta'_{p} \in \Phi^{n_{p} + 1}_{\gamma_{p}} \cap F(\rho)$ such that $H^{n_{p} + 1}_{\gamma_{p}}(\theta'_{p}, i_{n_{p}}) = \theta_{p}$, and such that $$H^{n_{p}+1}_{\alpha}(V_{\alpha, \gamma_{p}}(\theta'_{p}), j \cup \{ (x_{m},y)\}) = \psi$$ for some $y \in X_{n_{p} + 1} \setminus \range(j)$. By Proposition \ref{proppartone}, $$V_{\alpha + 1, \gamma_{p}}(H^{n_{p}}_{\gamma_{p}}(\theta_{p}, j)) = H^{n_{p}}_{\alpha + 1}(V_{\alpha + 1, \gamma_{p}}(\theta_{p}), j).$$ By condition (\ref{projfact}) of Definition \ref{hdef}, there is a $\psi' \in E(V_{\alpha + 1, \gamma_{p}}(\theta_{p}))$ such that $$\psi =
H^{n_{p} + 1}_{\alpha}(\psi', j \cup \{(x_{m}, y)\})$$ for some $y \in X_{n_{p} + 1} \setminus \range(j)$. By Proposition \ref{qlift}, there is a $\theta^{*}_{p} \in \Phi^{n_{p}}_{\gamma_{p} + 1} \cap F(\rho)$ such that $\theta_{p} = V_{\gamma_{p}, \gamma_{p} + 1}(\theta^{*}_{p})$. By Proposition \ref{proppartthree}, there is a $\theta'_{p} \in E(\theta^{*}_{p})$ such that $V_{\alpha, \gamma_{p}}(\theta'_{p}) = \psi'$. Then $\theta'_{p}$ is as desired.

Finally let us see how to avoid the members of $\Upsilon$. Fix $m \leq n_{p}$, $f \in \cI_{m,n_{p}}$ and $\upsilon \in \Upsilon \cap \Psi^{m}_{\delta}$. It suffices to show that we can find $\gamma_{p+1}$ in the interval $(\max\{\gamma_{p}, \gamma^{0}_{p+1}\}, \delta)$ and a $\theta_{p+1} \in \Phi^{n_{p}}_{\gamma_{p + 1}} \cap F(\rho)$ such that $H^{n_{p}}_{\delta}(\theta_{p+1}, f) \neq \upsilon$. Since $\Upsilon$ is disjoint from $\ms(\rho)$, there exists such a $\theta_{p+1}$ as desired.
\end{proof}

\begin{df}\label{ecdef} Given a limit ordinal $\beta$ and sets $\Phi_{\beta}$  ($\alpha < \beta$) such that each $\Phi_{\alpha}$ is a subset of $\Psi_{\alpha}$, a path $\bigwedge\{\psi_{\alpha} : \alpha < \beta\}$ through $\langle \Phi_{\alpha} : \alpha < \beta \rangle$ is \emph{isolated} (with respect to $\langle \Phi_{\alpha} : \alpha < \beta\rangle$) if for some $\alpha_{0} < \beta$,
$|V_{\alpha_{0}, \alpha}^{-1}[\{\phi_{\alpha}\}]| = 1$ for all $\alpha \in (\alpha_{0}, \beta)$.
\end{df}

As in Remark \ref{love}, Proposition \ref{fdetermine} shows that the term ``with respect to $\langle \Phi_{\alpha} : \alpha < \beta\rangle$" is unnecessary in Definition \ref{ecdef}, if $\langle \Phi_{\alpha} : \alpha < \beta \rangle$ is a Scott process.

\begin{remark}\label{ecover}
Suppose that $\beta$ is a limit ordinal, and $\mathcal{P} = \langle \Phi_{\alpha} : \alpha < \beta \rangle$ is a Scott process. Suppose that $m \leq n$ are elements of $\omega$, $j \in \cF_{m,n}$ and $\phi \in \Psi^{n}_{\beta}$ is an isolated path through $\mathcal{P}$.
Then $H^{n}_{\beta}(\phi, j)$ is isolated. To see this, note first of all that the case $m = n$ follows from part (\ref{permutevar}) of Remark \ref{tworems}. This fact allow us to reduce to the case where $j = i_{m}$. Then a proof by induction reduces to the case where $n = m + 1$. This case follows part (\ref{lf2}) of Proposition \ref{leastfixedalt}.
\end{remark}

\begin{remark}
  Given a limit ordinal $\beta$ and sets $\Phi_{\beta}$  ($\alpha < \beta$) such that each $\Phi_{\alpha}$ is a subset of $\Psi_{\alpha}$, the isolated paths through $\langle \Phi_{\alpha} : \alpha < \beta \rangle$ are exactly the minimal set of the sentence formed by taking the conjunction of the unique members of each set $\Phi^{0}_{\alpha}$. This follows from Remark \ref{ecover} and Proposition \ref{qlem}.
\end{remark}

\begin{df}
  A Scott process $\langle \Phi_{\alpha} : \alpha < \beta \rangle$ is \emph{scattered} if it there do not exist $n \in \omega$ and $\alpha_{\sigma}$, $\phi_{\sigma}$
  $(\sigma \in 2^{\less\omega})$ such that
  \begin{itemize}
  \item each $\alpha_{\sigma}$ is an element of $\beta$;
  \item each $\phi_{\sigma}$ is an element of $\Phi^{n}_{\alpha_{\sigma}}$;
  \item whenever $\sigma, \tau$ in $2^{\less\omega}$ are such that $\tau$ properly extends $\sigma$, $\alpha_{\sigma} < \alpha_{\tau}$ and $V_{\alpha_{\sigma},\alpha_{\tau}}(\phi_{\tau}) = \phi_{\sigma}$;
  \item whenever $\sigma, \tau$ in $2^{\less\omega}$ are such that neither of $\sigma$ and $\tau$ extends the other,
  \begin{itemize}
    \item if $\alpha_{\sigma} \leq \alpha_{\tau}$ then $V_{\alpha_{\sigma},\alpha_{\tau}}(\phi_{\tau}) \neq \phi_{\sigma}$,
    \item if $\alpha_{\tau} \leq \alpha_{\sigma}$ then $V_{\alpha_{\tau},\alpha_{\sigma}}(\phi_{\sigma}) \neq \phi_{\tau}$;
  \end{itemize}
  \end{itemize}
\end{df}

\begin{remark}
  Whether or not a Scott process $\langle \Phi_{\alpha} : \alpha < \beta \rangle$ is scattered is absolute between forcing extensions.
\end{remark}

\begin{remark}\label{econext} If $\langle \Phi_{\alpha}  :\alpha  < \delta\rangle$ is a Scott process of limit length having only countably many paths, it is scattered, and if it is scattered then every element of each $\Phi_{\alpha}$ is part of an isolated path through $\langle \Phi_{\alpha}  :\alpha  < \delta\rangle$. Similarly, suppose that $\langle \Phi_{\alpha}  :\alpha  < \delta\rangle$ is a Scott process of limit length where $\delta$ is possibly uncountable, and that there exist a $\beta < \omega_{1}$ such that for $\gamma \in (\beta, \omega_{1})$ there are only countably many Scott processes of length $\gamma$ extending $\langle \Phi_{\alpha} : \alpha < \beta \rangle$. Then again every element of each $\Phi_{\alpha}$ is part of an isolated path through $\langle \Phi_{\alpha}  :\alpha  < \delta\rangle$. Otherwise, one could find a $2^{\less\omega}$-splitting family in $\bigcup\{\Phi_{\alpha} : \beta < \alpha < \delta\}$ such whenever $X$ is a countable elementary submodel of a suitably large $H(\theta)$ with $X$ containing this family, the image of this splitting family under the transitive collapse of $X$ would give (using Theorem \ref{limitext}) a perfect set of Scott processes of the same countable length.
\end{remark}


In Proposition \ref{limitecmodels}, we do not require $\delta$ to have countable cofinality (whereas we did for Theorem \ref{limitext}).

\begin{prop}\label{limitecmodels}
  Suppose that $\delta$ is a limit ordinal, and that $\langle \Phi_{\alpha}   :\alpha < \delta\rangle$ is a Scott process
  such that each element of $\bigcup\{\Phi_{\alpha} : \alpha < \delta \}$ is extended by an isolated path through $\langle \Phi_{\alpha} : \alpha < \delta \rangle$.
  Letting $\Phi_{\delta}$ be the set of isolated paths through $\langle \Phi_{\alpha} : \alpha < \delta \rangle$, $\langle \Phi_{\alpha}  :\alpha  \leq  \delta\rangle$ is a Scott process. Furthermore, $\Phi_{\delta}$ then satisfies amalgamation, and every Scott process properly extending $\langle \Phi_{\alpha} : \alpha \leq \delta \rangle$ has rank at most $\delta$.
\end{prop}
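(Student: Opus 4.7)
The plan is to address the three assertions in turn. For the claim that $\langle \Phi_\alpha : \alpha \leq \delta \rangle$ is a Scott process, conditions (\ref{formone}), (\ref{econtain}), and (\ref{hfour}) at level $\delta$ are either inherited or vacuous, since $\delta$ is a limit. Condition (\ref{vfive}) at $\beta = \delta$ is the hypothesis in one direction and the definition of ``path'' in the other, and (\ref{htwozero}) at $\alpha = \delta$ is Remark \ref{ecover}. The remaining conditions (\ref{htwo}), (\ref{ppath}), and (\ref{combine}) at level $\delta$ all follow from a uniform recipe: given formulas at level $\delta$, pick $\gamma < \delta$ above the relevant isolation witnesses, apply the corresponding condition at $\gamma$ to produce a witness there, and invoke the hypothesis to extend it to an isolated path in $\Phi_\delta$. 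The required projection identities at level $\delta$ then follow from Remark \ref{ecover} together with the fact that two isolated paths agreeing at any level above both of their stabilization witnesses must coincide.

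The main obstacle is amalgamation. Suppose $\phi \in \Phi^{m+1}_\delta$ and $\psi \in \Phi^n_\delta$ with $m < n$ and $H^{m+1}_\delta(\phi, i_m) = H^n_\delta(\psi, i_m) = \chi \in \Phi^m_\delta$. The decisive tool will be Proposition \ref{qext}, since direct application of (\ref{combine}) at $\delta$ produces only a combined formula of arity $m + n + 1$ without the alignment needed to project down to arity $n + 1$. Choose $\alpha < \delta$ above the isolation witnesses of $\phi$ and $\psi$, and $\beta \in [\alpha + n - m + 1, \delta)$, and instantiate Proposition \ref{qext} with $\chi_\beta := V_{\beta,\delta}(\chi)$ as its base formula, $\psi_\alpha := V_{\alpha,\delta}(\psi)$ as its ``$\psi$'' (of arity $m + (n - m)$), and $\phi_\alpha := V_{\alpha,\delta}(\phi)$ as its ``$\theta$'' (of arity $m + 1$); both $\psi_\alpha$ and $\phi_\alpha$ lie in $F(\chi_\beta)$ by Proposition \ref{qlem}. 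This yields $\rho \in \Phi^{n+1}_\alpha$ and $j = i_m \cup \{(x_m, y)\}$ with $y \in X_{n+1} \setminus X_m$ such that $H^{n+1}_\alpha(\rho, i_n) = \psi_\alpha$ and $H^{n+1}_\alpha(\rho, j) = \phi_\alpha$. The hypothesis then extends $\rho$ to an isolated path $\bar{\rho} \in \Phi^{n+1}_\delta$, and Remark \ref{ecover} together with uniqueness of isolated paths above their stabilization points shows $H^{n+1}_\delta(\bar{\rho}, i_n) = \psi$ and $H^{n+1}_\delta(\bar{\rho}, j) = \phi$.

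For the final claim, suppose $\langle \Phi_\alpha : \alpha \leq \delta + 1 \rangle$ properly extends $\langle \Phi_\alpha : \alpha \leq \delta \rangle$, fix $\rho \in \Phi^n_{\delta+1}$, and set $\phi := V_{\delta,\delta+1}(\rho)$. I will show $E(\rho) = T(\phi) := \{\sigma \in \Phi^{n+1}_\delta : H^{n+1}_\delta(\sigma, i_n) = \phi\}$, which determines $\rho$ from $\phi$ via Definition \ref{psidef} and so gives the injectivity of $V_{\delta,\delta+1}$ on $\Phi_{\delta+1}$. Applying Proposition \ref{succcase} once in the process $\langle \Phi_\alpha : \alpha \leq \delta\rangle$ and once in the extension yields, for every $\alpha < \delta$,
\[
V_{\alpha,\delta}[T(\phi)] \;=\; E(V_{\alpha+1,\delta}(\phi)) \;=\; E(V_{\alpha+1,\delta+1}(\rho)) \;=\; V_{\alpha,\delta+1}[T^{\dagger}(\rho)],
\]
where $T^{\dagger}(\rho) := \{\psi \in \Phi^{n+1}_{\delta+1} : H^{n+1}_{\delta+1}(\psi, i_n) = \rho\}$. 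Given $\sigma \in T(\phi)$, I choose $\alpha$ above its isolation witness; the displayed equality produces $\psi \in T^{\dagger}(\rho)$ whose $V_{\delta,\delta+1}$-image lies in $T(\phi)$ and agrees with $\sigma$ at level $\alpha$, so isolation of $\sigma$ forces $V_{\delta,\delta+1}(\psi) = \sigma$. Condition (\ref{hfour}) applied at $\delta + 1$ then places $\sigma$ in $E(\rho)$, establishing $T(\phi) \subseteq E(\rho)$; the reverse inclusion is immediate.
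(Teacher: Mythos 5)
Your proposal is correct and follows the same strategy as the paper's proof: verify each condition of Definition \ref{prodef} (and amalgamation, and the rank bound) at a level $\gamma < \delta$ lying above the relevant isolation witnesses, and then lift to level $\delta$ using the fact that an isolated path is determined by its projection to any level above its witness. The only cosmetic differences are that you package the amalgamation step through Proposition \ref{qext} and the rank bound through Proposition \ref{succcase} together with the maximal-completion characterization of Remark \ref{fivepointsix}, where the paper appeals directly to the definition of the horizontal projections and to conditions (\ref{hfour}) and (\ref{projfact}); your version usefully fills in the amalgamation argument, which the paper states in a single sentence.
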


\begin{proof}
  Checking that $\Phi_{\delta}$ induces a Scott process involves checking conditions (\ref{htwo}), (\ref{ppath}) and (\ref{combine}) of Definition \ref{prodef}. Remark \ref{ecover} gives one direction of (\ref{htwo}). The other conditions can be shown by applying the corresponding fact at levels above the ordinal $\alpha_{0}$ witnessing that the formulas in question are isolated.

  That $\Phi_{\delta}$ amalgamates also follows from the definition of the functions $H_{\alpha + 1}$ ($n \in \omega$) for any ordinal $\alpha$ above the ordinal $\alpha_{0}$ witnessing that the formulas in question are isolated. By Proposition \ref{canmax}, it also follows from the fact that some Scott properly extending $\langle \Phi_{\alpha} : \alpha \leq \delta \rangle$ has rank $\delta$, which follows from the next paragraph.

  To see that every Scott process $\langle \Phi_{\alpha} : \alpha \leq \delta + 1 \rangle$ extending $\langle \Phi_{\alpha} : \alpha \leq \delta \rangle$ has rank $\delta$, suppose that we have $n \in \omega$, $\phi \in \Phi^{n}_{\delta + 1}$ and $\psi \in \Phi^{n+1}_{\delta}$ such that $H^{n+1}_{\delta}(\psi, i_{n}) = V_{\delta, \delta + 1}(\phi)$. Let $\beta < \delta$ be such that $V_{\delta, \delta+1}(\phi)$ and $\psi$ are the unique members of $V_{\beta,\delta}^{-1}[\{V_{\beta, \delta + 1}(\phi)\}]$ and $V_{\beta, \delta}^{-1}[\{V_{\beta, \delta}(\psi)\}]$ respectively. Then $$H^{n+1}_{\beta + 1}(V_{\beta + 1, \delta}(\psi), i_{n}) = V_{\beta + 1, \delta + 1}(\phi)$$ by Proposition \ref{proppartone}, so $V_{\beta, \delta}(\psi) \in E(V_{\beta + 1, \delta+ 1}(\phi))$ by condition (\ref{hfour}) of Definition \ref{prodef}. Then conditions (\ref{hfour}) and (\ref{projfact}) of Definition \ref{prodef} imply that $\psi \in E(\phi)$.
\end{proof}

\begin{remark}\label{rhoremark}
  Theorem \ref{limitext} shows that if $\delta$ is a limit ordinal and $\langle \Phi_{\alpha} : \alpha  < \delta\rangle$ is a Scott process with just countably many paths, then for each such path $\rho$, letting $\Phi_{\delta}$ be $\ms(\rho)$ we get a Scott process $\langle \Phi_{\alpha} : \alpha \leq \delta \rangle$. Since $\ms(\phi)$ and being scattered are absolute to forcing extensions, we get the same conclusion from the assumption that $\langle \Phi_{\alpha} : \alpha  < \delta\rangle$ is scattered. In this context, then, since $\ms(\rho)$ is the smallest set one can add to $\langle \Phi_{\alpha} : \alpha < \delta \rangle$ to get a Scott processes with $\rho$ in its last level, it follows (again, in the case where $\langle \Phi_{\alpha} : \alpha  < \delta\rangle$ is scattered) that if $\phi$ and $\psi$ are paths through $\langle \Phi_{\alpha} : \alpha  < \delta\rangle$ with $\phi \in \ms(\psi)$, then $\ms(\phi)$ is a subset of $\ms(\psi)$.
\end{remark}

In the following proposition, the countability assumption on the sets $\Phi_{\alpha}$ can be replaced by the assumption that $\langle \Phi_{\alpha} : \alpha  < \gamma \rangle$ is scattered, using Remark \ref{rhoremark}.

\begin{prop}\label{laterbound} Let $\beta$ be an ordinal, and let $\gamma$ be the least limit ordinal greater than or equal to $\beta$. Suppose that $\langle \Phi_{\alpha} : \alpha \leq \gamma + 1 \rangle$ is a Scott process of pre-rank $\beta$, such that $\Phi_{\alpha}$ is countable for all $\alpha < \gamma$. Then the rank of $\langle \Phi_{\alpha} : \alpha \leq \gamma + 1 \rangle$ is at most $\gamma$.
\end{prop}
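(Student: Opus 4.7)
The plan is to show that $V_{\gamma,\gamma+1}\restrict\Phi_{\gamma+1}$ is injective by identifying $\Phi_\gamma$ with the set of isolated paths through $\langle \Phi_\alpha : \alpha < \gamma\rangle$ and then invoking the final clause of Proposition \ref{limitecmodels}, which delivers the desired rank bound directly.

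First I would show that every formula in $\bigcup_{\alpha<\gamma}\Phi_\alpha$ is part of an isolated path through $\langle \Phi_\alpha : \alpha < \gamma\rangle$. Since the pre-rank of the process is $\beta$, for every ordinal $\eta > \beta$ there is a \emph{unique} Scott process of length $\eta$ extending $\langle \Phi_\alpha : \alpha < \beta\rangle$. In the main case $\beta < \omega_1$, this uniqueness (which in particular entails the existence of only countably many such extensions at each length $\eta \in (\beta,\omega_1)$) is precisely the hypothesis of the second part of Remark \ref{econext} applied with parameter $\beta$, so the desired isolation conclusion follows. For $\beta \geq \omega_1$ the countability assumption on the $\Phi_\alpha$ is replaced by scatteredness, as indicated in the remark immediately preceding the proposition, and the first part of Remark \ref{econext} yields the same isolation conclusion.

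Next I would let $\Phi'_\gamma$ denote the set of isolated paths just produced and apply Proposition \ref{limitecmodels}: appending $\Phi'_\gamma$ to $\langle \Phi_\alpha : \alpha < \gamma\rangle$ gives a Scott process of length $\gamma + 1$. Since this Scott process extends $\langle \Phi_\alpha : \alpha < \beta\rangle$, and the extension of that length is unique by the pre-rank hypothesis, it must coincide with $\langle \Phi_\alpha : \alpha \leq \gamma\rangle$; in particular $\Phi_\gamma = \Phi'_\gamma$. Applying the last clause of Proposition \ref{limitecmodels}, every Scott process properly extending $\langle \Phi_\alpha : \alpha \leq \gamma\rangle$ has rank at most $\gamma$. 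Since $\langle \Phi_\alpha : \alpha \leq \gamma+1\rangle$ is one such proper extension, this yields the conclusion.

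The main subtlety is the verification of the isolation hypothesis in the uncountable case $\beta \geq \omega_1$, where the direct appeal to the second part of Remark \ref{econext} via unique (hence countably many) extensions is unavailable; here one relies on the scatteredness substitution flagged in the preceding remark to route the argument through the first part of Remark \ref{econext} instead.
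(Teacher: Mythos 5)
Your main case ($\beta < \omega_{1}$) is correct and follows essentially the paper's route: both arguments reduce the proposition to showing that every formula in $\bigcup_{\alpha<\gamma}\Phi_{\alpha}$ lies on an isolated path through $\langle\Phi_{\alpha}:\alpha<\gamma\rangle$, use the uniqueness supplied by the pre-rank hypothesis to identify $\Phi_{\gamma}$ with the set of isolated paths, and then invoke the final clause of Proposition \ref{limitecmodels}. The only real difference is how isolation is obtained: you apply the second part of Remark \ref{econext} directly to the pre-rank hypothesis (one, hence countably many, extensions of $\langle\Phi_{\alpha}:\alpha<\beta\rangle$ at each countable length $>\beta$), whereas the paper first uses Theorem \ref{limitext} to deduce that $\langle\Phi_{\alpha}:\alpha<\gamma\rangle$ has only countably many paths (otherwise the ``furthermore'' clause of that theorem would produce two distinct extensions of length $\gamma+1$, contradicting uniqueness) and then uses the first part of Remark \ref{econext}. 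In the countable case these routes are interchangeable.

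The gap is in your case $\beta\ge\omega_{1}$. The remark preceding the proposition describes a \emph{variant} of the proposition in which the countability hypothesis on the sets $\Phi_{\alpha}$ is replaced by the hypothesis that $\langle\Phi_{\alpha}:\alpha<\gamma\rangle$ is scattered; it does not entitle you to assume scatteredness under the stated hypotheses. With only countable levels and no scatteredness assumption, the first part of Remark \ref{econext} needs as input either ``only countably many paths'' or ``scattered,'' and you have established neither when $\gamma$ may be uncountable (in particular of uncountable cofinality, so that Theorem \ref{limitext} is also unavailable). The paper disposes of this case at the outset by observing that the countability of the $\Phi_{\alpha}$ for $\alpha<\gamma$ already forces the pre-rank $\beta$, and hence $\gamma$, to be countable; with that observation your ``main case'' is the only case. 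You should either supply that observation (or some other proof that $\beta<\omega_{1}$), or give a genuine argument for the uncountable case; as written, the second half of your proof assumes what it needs to prove.
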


\begin{proof}
  Since $\Phi_{\alpha}$ is countable for all $\alpha < \gamma$, $\beta$ is countable.
  By the definition of pre-rank, $\langle \Phi_{\alpha} : \alpha \leq \gamma \rangle$ is the unique Scott process of length $\gamma + 1$ extending $\langle \Phi_{\alpha} : \alpha < \gamma \rangle$. By Theorem \ref{limitext} that $\langle \Phi_{\alpha} : \alpha < \gamma \rangle$ has only countably many paths. By Proposition \ref{limitecmodels} that all of them are isolated, and $\langle \Phi_{\alpha} : \alpha \leq \gamma + 1 \rangle$ has rank at most $\gamma$.
\end{proof}


Combining Remark \ref{econext} with Propositions \ref{canmax}, \ref{threadcon} and \ref{limitecmodels}
and Theorems \ref{wkh}, \ref{ctblehasmodel} and \ref{omega1model}, we get the following.

\begin{thrm}\label{alllimits} Let $\phi$ be a sentence of $\cL_{\omega_{1}, \omega}(\tau)$ and let $\alpha$ be the quantifier depth of $\phi$. Let $\beta \in (\alpha, \omega_{2})$ be an ordinal such that $\phi$ has a model of Scott rank $\beta$, but only countably many models of Scott rank $\gamma$ for each countable ordinal $\gamma$ in the interval $(\alpha, \beta)$. Then for every limit ordinal $\delta \in (\alpha, \beta)$, $\phi$ has a model of Scott rank $\delta$.
\end{thrm}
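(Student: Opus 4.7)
The plan is to fix a model $M \models \phi$ of Scott rank $\beta$ and a limit ordinal $\delta \in (\alpha, \beta)$, truncate $M$'s Scott process at length $\delta$, append two canonical levels to obtain a Scott process of length $\delta + 2$ and Scott rank exactly $\delta$, and then realize this process as a structure satisfying $\phi$ using Theorem \ref{ctblehasmodel} or Theorem \ref{omega1model}. First I would show that $\langle \Phi_{\alpha'}(M) : \alpha' < \delta \rangle$ is scattered. If it were not, Remark \ref{econext} combined with Theorem \ref{limitext} (after passing to the transitive collapse of a countable elementary submodel of some large $H(\theta)$ containing a $2^{\less\omega}$-splitting family) would give a perfect set of pairwise distinct countable Scott processes of a common countable length, each extending a fixed countable initial segment of $M$'s process past level $\alpha$. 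Each such process produces, via Theorem \ref{ctblehasmodel} (built on Proposition \ref{threadcon}), a countable model of $\phi$ (by Theorem \ref{wkh}, since these processes agree with $M$'s at level $\alpha = $ qd $\phi$). Using Remark \ref{oneextension} to observe that only countably many can have rank below the splitting level, pigeonhole on countably many ranks in $(\alpha, \beta)\cap\omega_1$ would then yield uncountably many models of some single countable Scott rank, contradicting the hypothesis.

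With scatteredness in hand, Remark \ref{econext} asserts that every formula in each $\Phi_{\alpha'}(M)$ belongs to an isolated path through $\langle \Phi_{\alpha'}(M) : \alpha' < \delta\rangle$. Let $\Phi_\delta$ be the set of all such isolated paths. Proposition \ref{limitecmodels} then gives that $\langle \Phi_{\alpha'}(M) : \alpha' < \delta\rangle^{\frown}\langle\Phi_\delta\rangle$ is a Scott process in which $\Phi_\delta$ amalgamates and every proper extension of which has rank at most $\delta$. Appending the maximal completion $\Phi_{\delta+1}$ of $\Phi_\delta$ via Proposition \ref{canmax} produces a Scott process of length $\delta + 2$ whose rank equals $\delta$ exactly: at most $\delta$ by the previous sentence, and not less because for each $\gamma < \delta$ the map $V_{\gamma, \gamma+1}$ fails to be injective on $\Phi_{\gamma+1} = \Phi_{\gamma+1}(M)$ (else $M$ would have Scott rank at most $\gamma < \beta$).

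To produce the desired structure, I take $M$ countable if $\delta < \omega_1$ (so all levels of the constructed process are countable) and apply Theorem \ref{ctblehasmodel}; otherwise, with $\delta \in [\omega_1, \beta)$, I take $|M| = \aleph_1$, note that each $\Phi_{\alpha'}$ has cardinality at most $\aleph_1$ and that $|\Phi_{\delta+1}| \leq \aleph_1$ since each element of the maximal completion is determined by its first conjunct, and apply Theorem \ref{omega1model}. In either case Theorem \ref{wkh} guarantees the resulting model satisfies $\phi$ (as $\Phi_\alpha(M)$ agrees with the $\alpha$-th level of the built process, and this level already decides $\phi$), and the rank calculation shows it has Scott rank exactly $\delta$.

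The principal obstacle is the scatteredness step: converting the hypothesis on the number of countable models of each countable Scott rank in $(\alpha,\beta)$ into the bound needed to invoke the splitting/perfect-set argument of Remark \ref{econext} requires delicate bookkeeping, particularly in routing the distinct Scott processes produced by absoluteness through Remark \ref{oneextension} to get distinct countable models of a common rank. A secondary subtlety is ensuring the starting model $M$ can be chosen of cardinality appropriate to $\delta$ (countable when $\delta < \omega_1$, of size $\aleph_1$ when $\delta \geq \omega_1$) in order to control $|\Phi_{\delta + 1}|$ well enough to apply Theorem \ref{omega1model}.
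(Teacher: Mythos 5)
Your overall architecture coincides with the paper's: Theorem \ref{alllimits} is assembled from Remark \ref{econext}, Propositions \ref{canmax}, \ref{threadcon} and \ref{limitecmodels}, and Theorems \ref{wkh}, \ref{ctblehasmodel} and \ref{omega1model}, and your steps (take $\Phi_{\delta}$ to be the isolated paths, invoke amalgamation and the maximal completion, build a model by threads or weavings, compute the rank as exactly $\delta$ using Proposition \ref{limitecmodels} for the upper bound and Proposition \ref{leastfixed} for the lower bound, and recover satisfaction of $\phi$ from Theorem \ref{wkh}) are the intended ones. Those parts are fine.

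The gap is in the step you yourself identify as the principal obstacle: converting the counting hypothesis into the hypothesis of Remark \ref{econext}. Your pigeonhole does not close, for two separate reasons. First, when $\beta>\omega_{1}$ the set $(\alpha,\beta)\cap\omega_{1}$ is uncountable, so distributing uncountably many (even perfectly many) pairwise non-isomorphic countable models over these ranks need not concentrate uncountably many on a single rank; under CH one can have $2^{\aleph_{0}}=\aleph_{1}$ models spread one per countable rank without violating the hypothesis. Second, and more basically, the countable models you extract from the perfect set of Scott processes via Theorem \ref{limitext} and Theorem \ref{ctblehasmodel} come with no control on their Scott ranks: Theorem \ref{limitext} supplies \emph{some} countable top level containing the given path, and the resulting model can have arbitrarily large countable Scott rank --- in particular rank $\geq\beta$ when $\beta<\omega_{1}$, a region about which the hypothesis says nothing. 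So non-scatteredness yields uncountably many pairwise non-isomorphic countable models of $\phi$, but not uncountably many of a single Scott rank lying inside $(\alpha,\beta)$, which is what you need to contradict. What Remark \ref{econext} actually requires is that for some countable $\beta'$ there are only countably many Scott processes of each countable length extending $\langle\Phi_{\alpha'}(M):\alpha'<\beta'\rangle$; in the paper's application this comes from Remark \ref{smallin}, i.e., from the absence of a perfect set of pairwise non-isomorphic models of $\phi$ (the Vaught-counterexample situation), not from a rank-by-rank count. Your secondary worry --- arranging $|\Phi_{\delta}|\leq\aleph_{1}$ so that Theorem \ref{omega1model} applies, which you address by positing a rank-$\beta$ model of size $\aleph_{1}$ that you have not produced --- is also unresolved, and in the paper it is the forcing-absoluteness material (Corollary \ref{sec10cor2}) that supplies the needed cardinality control in the intended application.
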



\section{A forcing-absoluteness argument}\label{fabsection}


The set of $\tau$-structures with domain $\omega$ is naturally seen as a Polish space $X_{\tau}$, where a basic open set is given by the set of structures in which $R(i_{0},\ldots,i_{n-1})$ holds, for $R$ an $n$-ary relation symbol from $\tau$ and $i_{0},\ldots,i_{n-1} \in \omega$ (see Section 11.3 of \cite{Gao}, for instance). Given a sentence $\phi \in \cL_{\omega_{1}, \omega}(\tau)$, the set of models of $\phi$ (with domain $\omega$) is a Borel subset of $X_{\tau}$. By a theorem of Lopez-Escobar \cite{Lopez-Escobar}, every Borel subset of $X_{\tau}$ which is closed under isomporphism is also the set of models of some $\cL_{\omega_{1}, \omega}(\tau)$ sentence. Let us say that an \emph{analytic counterexample to Vaught's Conjecture} is an analytic set of $\tau$-structures on $\omega$, closed under isomorphism, having uncountably many models up to isomorphism, but not a perfect set of nonisomorphic models. Steel \cite{Steel} presents two examples of analytic counterexamples to Vaught's Conjecture (for certain relational vocabularies), one due to H. Friedman and the other to K. Kunen.
In this section we use a forcing-absoluteness argument to prove the following.\footnote{As mentioned in the introduction, the forcing-absoluteness arguments in this section appear in essentially identical form in Section 1 of \cite{Hjorth96}.}

\begin{thrm}\label{sec10thrm1} Suppose that $\mathcal{A}$ is an analytic counterexample to Vaught's Conjecture, and let $x \subseteq \omega$ be such that $\mathcal{A}$ is $\Sigma^{1}_{1}$ in $x$. Fix $M \in \mathcal{A}$, and let $\beta$ be an ordinal. Then $\langle \Phi_{\alpha}(M) : \alpha < \beta\rangle \in L[x]$.
\end{thrm}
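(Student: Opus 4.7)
The plan is to apply Mansfield's perfect set theorem for $\Sigma^{1}_{2}$ sets of reals to the collection of Scott sentences realized by countable models of $\mathcal{A}$, and use the non-existence of a perfect set of non-isomorphic models in $\mathcal{A}$ to force that collection into $L[x]$. First I would reduce to showing that the Scott sentence $\phi^{M}$ of $M$ lies in $L[x]$: since $M$ is a countable $\tau$-structure, its Scott rank $\alpha_{M}$ is a countable ordinal, and $\phi^{M}$---the conjunction of $\phi^{M}_{\emptyset,\alpha_{M}}$ with $\bigwedge_{n}\forall\bar{x}(\phi^{M}_{\bar{x},\alpha_{M}} \to \phi^{M}_{\bar{x},\alpha_{M}+1})$---is an $\cL_{\omega_{1},\omega}(\tau)$ sentence of countable quantifier depth, coded by a real. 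Given $\phi^{M} \in L[x]$, I would choose inside $L[x]$ a countable $N$ satisfying $\phi^{M}$ (using L\"owenheim--Skolem in $L[x]$, noting that satisfiability of $\cL_{\omega_{1},\omega}$ sentences in countable models is absolute). Then $N \models \phi^{M}$ in $V$, and since $\phi^{M}$ is a Scott sentence, $N \cong M$ in $V$, hence $\Phi_{\alpha}(N) = \Phi_{\alpha}(M)$ for every $\alpha$; the sequence $\langle \Phi_{\alpha}(N) : \alpha < \beta \rangle$ is then computed inside $L[x]$ from $N$ via the absolute construction of Definition \ref{Scottformdef}, giving $\langle \Phi_{\alpha}(M) : \alpha < \beta \rangle \in L[x]$.

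The key step is Mansfield applied to
$$\Sigma = \{\phi^{N} : N \text{ a countable } \tau\text{-structure in } \mathcal{A}\},$$
viewed as a set of reals. One checks $\Sigma$ is $\Sigma^{1}_{2}(x)$, by expressing ``$r$ is the Scott sentence of some $N \in \mathcal{A}$'' as the existence of a countable $N \in \mathcal{A}$ satisfying $r$ together with the $\Pi^{1}_{1}$ statement that $r$ characterizes the isomorphism type of $N$ among countable structures. Mansfield's theorem says any $\Sigma^{1}_{2}(x)$ set of reals either is contained in $L[x]$ or contains a perfect subset. Ruling out the latter: were $P \subseteq \Sigma$ a perfect set, distinct elements of $P$ would index distinct isomorphism classes of countable models in $\mathcal{A}$, and by $\Sigma^{1}_{1}$-uniformization applied to the $\Sigma^{1}_{1}(x)$ relation ``$N \in \mathcal{A}$ and $N \models \phi$,'' there is a Borel selection $\phi \mapsto N_{\phi} \in \mathcal{A}$. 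The Borel image of $P$ under this selection is an uncountable analytic set of pairwise non-isomorphic elements of $\mathcal{A}$, which by the perfect set theorem for analytic sets contains a perfect subset---a perfect set of non-isomorphic models in $\mathcal{A}$, contradicting $\mathcal{A}$ being a counterexample to Vaught's Conjecture. Hence $\Sigma \subseteq L[x]$ and $\phi^{M} \in L[x]$.

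The main obstacle is the precise complexity calculation for $\Sigma$, and verifying that distinct points in a perfect subset of $\Sigma$ yield a perfect set of genuinely non-isomorphic realizations (rather than, e.g., collapsing under the selection). An alternative to the uniformization step is to apply the constructive thread machinery of Proposition \ref{threadcon} and Theorem \ref{existmodel} uniformly to the Scott processes at a fixed countable level past the Scott ranks in question, producing an explicit Borel-in-process assignment of a realizing countable model; the perfect-set contradiction can then be derived at the level of Scott processes, bypassing any appeal to a general uniformization principle and keeping the argument within the framework developed earlier in the paper.
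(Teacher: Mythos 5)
Your overall strategy (reduce to showing the canonical Scott sentence $\phi^{M}$ has a code in $L[x]$, then recover an isomorphic copy of $M$ inside $L[x]$ by $\Sigma^{1}_{1}$-absoluteness and compute the Scott process there) is sound in its outer reduction, and it is genuinely different from the paper's argument, which never passes through Scott sentences at all: the paper works level-by-level with Scott processes of \emph{countable} length (Remark \ref{smallin}, a Mostowski-absoluteness argument between transitive models), and then handles arbitrary $\beta$ by collapsing $\beta$ inside a countable elementary submodel of $L_{\theta}[x]$ and using the fact that a set lying in $W[g]$ for \emph{every} $W$-generic $g$ already lies in $W$. However, your key step has a genuine gap. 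Mansfield's theorem is a statement about sets of \emph{reals}, and your set $\Sigma$ is a set of \emph{codes} of Scott sentences. If some $\phi^{M}$ has no code in $L[x]$, you do get $\Sigma \not\subseteq L[x]$ and hence a perfect $P \subseteq \Sigma$; but nothing prevents $P$ from consisting of continuum many codes of a \emph{single} sentence (any countably infinite object has perfectly many codes), in which case every selection $r \mapsto N_{r}$ lands in one isomorphism class and no contradiction with the counterexample hypothesis is obtained. The passage from ``perfectly many codes'' to ``perfectly many isomorphism types'' is exactly the hard point; fixing it requires either a level-by-level splitting/fusion argument on the Scott analysis (which is what Remark \ref{smallin} and Remark \ref{econext} do in effect), or machinery such as Burgess's theorem on analytic equivalence relations --- and the latter only yields $\leq \aleph_{1}$ classes, not the $L[x]$ conclusion you need.

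Two secondary points. First, your complexity computation is off: ``for all countable $N'$, $N' \models r$ iff $N' \cong N$'' is $\Pi^{1}_{2}$, not $\Pi^{1}_{1}$ (the forward implication hides an existential quantifier over isomorphisms), so $\Sigma$ as you define it is only visibly $\Sigma^{1}_{3}$; to get $\Sigma^{1}_{2}$ you must instead define ``$r$ is the canonical Scott sentence of $N$'' via the recursion of Definition \ref{Scottformdef} (well-foundedness of the subformula tree, which is $\Pi^{1}_{1}$, together with the $\Sigma^{1}_{1}$ existence of a correct recursion witness). This is repairable but not what you wrote. Second, your appeal to ``$\Sigma^{1}_{1}$-uniformization'' to select $N_{\phi}$ is not a standard ZFC principle in the form you need; a Jankov--von Neumann selection restricted to a comeager subset of $P$ would do, but this again only matters once the codes-versus-objects problem above is resolved. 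The final step of your argument --- finding $N \models \phi^{M}$ countable in $L[x]$, concluding $N \cong M$ in $V$ by Scott's theorem, and computing $\langle \Phi_{\alpha}(N) : \alpha < \beta\rangle$ in $L[x]$ for arbitrary (even uncountable) $\beta$ by absoluteness of the recursion --- is correct and would cleanly finish the proof if the Mansfield step were repaired.
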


Applying this theorem in forcing extensions of $V$ we get the following ostensibly stronger fact.

\begin{cor}\label{sec10cor2} Suppose that $\mathcal{A}$ is an analytic counterexample to Vaught's Conjecture, and let $x \subseteq \omega$ be such that $\mathcal{A}$ is $\Sigma^{1}_{1}$ in $x$. Let $M$ be a member of the reinterpreted version of $\mathcal{A}$ in a forcing extension of $V$, and let $\beta$ be an ordinal. Then $\langle \Phi_{\alpha}(M) : \alpha < \beta\rangle \in L[x]$.
\end{cor}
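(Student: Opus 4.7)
The plan is to deduce the corollary from Theorem \ref{sec10thrm1} by working inside an appropriate forcing extension. Suppose $M$ appears in a forcing extension $V[G]$, where $\mathcal{A}$ is reinterpreted as $\mathcal{A}^{V[G]}$, namely the set defined in $V[G]$ by the same $\Sigma^{1}_{1}(x)$ formula used to define $\mathcal{A}$ in $V$. The Scott process $\langle \Phi_{\alpha}(M) : \alpha < \beta \rangle$ is computed absolutely from $M$ and $\beta$ by the recursion in Definition \ref{Scottformdef}, so it does not matter in which model we compute it. Applying Theorem \ref{sec10thrm1} inside $V[G]$ to $M \in \mathcal{A}^{V[G]}$ would give $\langle \Phi_{\alpha}(M) : \alpha < \beta \rangle \in L[x]^{V[G]}$. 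Combined with the classical absoluteness $L[x]^{V[G]} = L[x]^{V} = L[x]$ (since $x \in V$), this yields the conclusion.

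For this strategy I need two facts. First, the $\Sigma^{1}_{1}(x)$ definition of $\mathcal{A}$ continues to define $\mathcal{A}^{V[G]}$ correctly in $V[G]$; this is immediate from Mostowski absoluteness for $\Sigma^{1}_{1}$ statements. Second, and this is the main obstacle, $\mathcal{A}^{V[G]}$ must itself be an analytic counterexample to Vaught's Conjecture inside $V[G]$, i.e.\ in $V[G]$ it must still have uncountably many isomorphism classes but no perfect set of pairwise nonisomorphic models. To see that $\mathcal{A}^{V[G]}$ acquires no perfect set of nonisomorphic models, the existence of such a perfect set (together with pairwise nonisomorphism) is witnessed by a $\boldsymbol{\Sigma}^{1}_{2}$ statement; by Shoenfield absoluteness any witness in $V[G]$ would reflect to $V$, contradicting the assumption on $\mathcal{A}$. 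The remaining clause, that $\mathcal{A}^{V[G]}$ still has uncountably many isomorphism classes in $V[G]$, is the real content of the forcing-absoluteness argument and is handled by the kind of tree-of-perfect-sets analysis outlined in Section 1 of \cite{Hjorth96}: if collapsing $\omega_{1}$ were to reduce $\mathcal{A}$ to only countably many isomorphism classes, one could extract from the collapsing forcing a tree producing a perfect set of nonisomorphic models already in $V$, again contradicting the counterexample hypothesis.

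Once these two ingredients are in place, one applies Theorem \ref{sec10thrm1} in $V[G]$ to the counterexample $\mathcal{A}^{V[G]}$ and the model $M$, obtaining $\langle \Phi_{\alpha}(M) : \alpha < \beta \rangle \in L[x]^{V[G]}$. Absoluteness of $L[x]$ then gives the result. The whole argument really is just Theorem \ref{sec10thrm1} read in $V[G]$ rather than $V$; the work is entirely in verifying that the counterexample hypothesis is itself sufficiently forcing-absolute, and it is this verification that I expect to consume most of the proof.
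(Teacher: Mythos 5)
Your proposal is correct and is essentially the paper's own argument: the paper proves the corollary in one line by ``applying Theorem \ref{sec10thrm1} in forcing extensions of $V$,'' with the persistence of the counterexample hypothesis to $V[G]$ packaged as the observation (Remark \ref{manyranks}) that ``$\mathcal{A}$ is an analytic counterexample to Vaught's Conjecture'' is $\Pi^{1}_{2}$ in $x$ and hence forcing-absolute. Your two verification steps (Mostowski absoluteness of the $\Sigma^{1}_{1}(x)$ definition, and Shoenfield/tree arguments for ``no perfect set'' plus ``still uncountably many classes,'' the latter via unboundedly many Scott ranks below $\omega_{1}$) are exactly the content the paper defers to that remark and to \cite{Hjorth96}.
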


Before beginning the proof of Theorem \ref{sec10thrm1} (which is short), we make a couple remarks. In what follows we will talk of sufficient fragments of ZFC.
The theory ZFC$^{\circ}$ from \cite{BaldwinLarson} is one such fragment.


\begin{remark}\label{smallin} Let $\mathcal{A}$ be an analytic family of $\tau$-structures on $\omega$, and fix $\beta < \omega_{1}$. The set
of sequences $\langle \Phi_{\alpha}(M) : \alpha < \beta \rangle$ for $M \in \mathcal{A}$ is naturally coded by an analytic set of reals, which contains a
perfect set if it is uncountable. If $\mathcal{A}$ is an analytic counterexample to Vaught's Conjecture, then, the set of such sequences is countable for each $\beta < \omega_{1}$.

For any analytic family of $\tau$-structures, and any countable (possibly empty) set of Scott processes of length $\beta < \omega_{1}$, the assertion that there exists a member of the family whose Scott process up to length $\beta$ is not in this countable set is $\Sigma^{1}_{1}$ in codes for $\beta$, the family and the countable set, and thus absolute to any model of (a sufficient fragment of) ZFC that contains them. Furthermore, if such a model thought that uncountably many such processes existed, it could build a perfect set of such processes.

It follows that if $\mathcal{A}$ is an analytic counterexample to Vaught's Conjecture then any inner model $N$ of (a sufficient fragment of) ZFC containing a real parameter code for $\mathcal{A}$ contains all sequences of the form $\langle \Phi_{\alpha}(M) : \alpha < \beta \rangle$, for $M \in \mathcal{A}$ and $\beta < \omega_{1}^{N}$. This gives Theorem \ref{sec10thrm1} for initial segments of Scott processes of length less than $\omega_{1}^{L[x]}$.
\end{remark}

Recall that for any ordinal $\gamma$, $\Col(\omega, \gamma)$ is the
partial order which adds a function (generically, a surjection) from
$\omega$ to $\gamma$ by finite pieces, ordered by inclusion.

\begin{proof}[Proof of Theorem \ref{sec10thrm1}.]
  Let $\theta > \beta$ be a regular cardinal of $L[x]$ such that $L_{\theta}[x]$ satisfies a sufficient fragment of ZFC (for instance, let $\theta$ be a regular cardinal of $V$ greater than $2^{2^{(|\beta| + \omega_{1})}}$).
  Let $X$ be a countable (in $V$) elementary submodel of $L_{\theta}[x]$ containing $\{x, \langle \Phi_{\alpha} : \alpha < \beta \rangle \} \cup \beta$. Let $\gamma$ be such that the transitive collapse of $X$ is $L_{\gamma}[x]$. By the last paragraph of Remark \ref{smallin}, whenever $g$ is an $L_{\gamma}[x]$-generic filter for $\Col(\omega, \beta)$, $\langle \Phi_{\alpha} : \alpha < \beta \rangle$ is in $L_{\gamma}[x][g]$. This means that $\langle \Phi_{\alpha} : \alpha < \beta \rangle$ is in $L_{\gamma}[x]$ (this is a classical forcing fact; the point is that otherwise one could choose a generic filter while ensuring that each name in $L_{\gamma}[x]$ realizes to some value other than $\langle \Phi_{\alpha} : \alpha < \beta \rangle$). By elementarity, then,
  $\langle \Phi_{\alpha} : \alpha < \beta \rangle$ is in $L_{\theta}[x]$.
\end{proof}

\begin{remark}\label{manyranks} Let $\mathcal{A}$ be an analytic family of $\tau$-structures on $\omega$. The assertion that $\mathcal{A}$ is an analytic counterexample to Vaught's Conjecture is $\Pi^{1}_{2}$ in a real parameter $x$ for $\mathcal{A}$, and therefore absolute to $L[x]$.\footnote{There exist perfectly many nonisomorphic structures in $\mathcal{A}$ if and only if some wellfounded countable model of a sufficient fragment of ZFC thinks there exist perfectly many nonisomorphic structures in $\mathcal{A}$ (see the proof of Theorem 6.2 of \cite{BaldwinLarson}, for instance), and this later statement is easily seen to be $\Sigma^{1}_{2}$. The statement that there are countable models in $\mathcal{A}$ of unboundedly many Scott ranks below $\omega_{1}$ is easily seen to be $\Pi^{1}_{2}$.} It follows that for every cardinal $\kappa$ of $L[x]$, there are cofinally many
ordinals below $(\kappa^{+})^{L[x]}$ which are the Scott rank of a structure in $\mathcal{A}$, in any forcing extension of $L[x]$ via the partial order $\Col(\omega, \kappa)$. Applying Theorem \ref{alllimits}, this gives (in the case where $\mathcal{A}$ is Borel) that this set of ordinals (in such a forcing extension) includes coboundedly many limit ordinals below $(\kappa^{+})^{L[x]}$.
\end{remark}

Theorem \ref{settheorem} below gives an alternate proof of Theorem \ref{sec10thrm1} (the idea behind this alternate proof is essentially the same, recast slightly). The proof of Theorem \ref{settheorem} in turns uses the following standard forcing fact, which is easily proved by induction on the rank of a given set in the intersection of $V[G]$ and $V[H]$.

\begin{thrm}\label{standprod} Let $Q_{1}$ and $Q_{2}$ be partial orders, and suppose that $(G,H)$ is a generic filter for $Q_{1} \times Q_{2}$.
Then the ground model $V$ is the intersection of $V[G]$ and $V[H]$.
\end{thrm}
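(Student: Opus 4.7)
The inclusion $V \subseteq V[G] \cap V[H]$ is immediate, so the whole content is the reverse inclusion, which I would prove by $\in$-induction on the rank (computed in $V[G\times H]$) of an element of $V[G] \cap V[H]$. For the inductive step, fix $X \in V[G] \cap V[H]$ and assume every element of $V[G] \cap V[H]$ of strictly smaller rank lies in $V$. Since $X \in V[G]$ gives $X \subseteq V[G]$ and similarly $X \subseteq V[H]$, each element of $X$ lies in $V[G]\cap V[H]$ and therefore in $V$ by the induction hypothesis. Thus $X \subseteq V$, and the task reduces to showing that such an $X$ is in $V$.

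Pick $Q_1$-name $\dot X \in V$ with $\dot X^G = X$ and $Q_2$-name $\dot Y \in V$ with $\dot Y^H = X$. Viewing both as $Q_1 \times Q_2$-names (via the obvious projections), choose $(p_0, q_0) \in G \times H$ forcing $\dot X = \dot Y$ in the product forcing. Then in $V$ define
\[
X' \;=\; \{\, a \in V : (p_0, q_0) \Vdash_{Q_1 \times Q_2} \check a \in \dot X\,\},
\]
which is a set in $V$. Clearly $X' \subseteq X$, since $(p_0, q_0) \in G \times H$. The goal is to prove $X = X'$.

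The key step, and the only place where product-genericity is used essentially, is to show that $(p_0, q_0)$ decides the statement $\check a \in \dot X$ for every $a \in V$. Suppose toward contradiction that some $a \in V$ is not decided, so there is $p \leq p_0$ in $Q_1$ with $p \Vdash_{Q_1} \check a \notin \dot X$. Let $G'$ be $Q_1$-generic over $V$ with $p \in G'$; then $a \notin \dot X^{G'}$. For any $H'$ that is $Q_2$-generic over $V[G']$ with $q_0 \in H'$, the pair $(G', H')$ is $Q_1 \times Q_2$-generic over $V$ with $(p_0, q_0) \in G' \times H'$, so $\dot X^{G'} = \dot Y^{H'}$ and hence $a \notin \dot Y^{H'}$. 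Since this holds for every such $H'$, a standard density argument (using that the set of $q \leq q_0$ forcing $\check a \notin \dot Y$ must meet every generic $H'$ over $V[G']$ containing $q_0$) shows $q_0 \Vdash_{Q_2} \check a \notin \dot Y$, and therefore $(p_0, q_0) \Vdash \check a \notin \dot X$. The symmetric argument handles the case where some $p \leq p_0$ forces $\check a \in \dot X$: one shows $q_0 \Vdash_{Q_2} \check a \in \dot Y$, and so $(p_0, q_0) \Vdash \check a \in \dot X$. In either case $(p_0, q_0)$ decides $\check a \in \dot X$, contradicting our assumption.

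With this decision property in hand, $a \in X$ iff $(p_0, q_0) \Vdash \check a \in \dot X$, so $X = X' \in V$, completing the induction. The main obstacle is the decision step, and the only feature of the product forcing that it uses is that once $G'$ is fixed, one still has the freedom to vary $H'$ among $Q_2$-generics over $V[G']$ containing $q_0$; this is exactly the independence built into the definition of a product-generic filter.
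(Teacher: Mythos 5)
Your proof is correct and follows exactly the route the paper indicates: the paper only remarks that the theorem ``is easily proved by induction on the rank of a given set in the intersection of $V[G]$ and $V[H]$,'' and your argument is that induction, filled in with the standard mutual-genericity step showing that a condition $(p_0,q_0)$ forcing $\dot X = \dot Y$ must decide $\check a \in \dot X$ for every $a$. No further comparison is needed.
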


\begin{thrm}\label{settheorem} Suppose that $\mathcal{A}$ is an analytic counterexample to Vaught's Conjecture, and that $x \subseteq \omega$ is such that $\mathcal{A}$ is $\Sigma^{1}_{1}$ in $x$. Let $\kappa$ be an infinite cardinal of $L[x]$, fix $\gamma < (\kappa^{+})^{L[x]}$, and suppose that $\sigma$ is a
$\Col(\omega, \kappa)$-name in $L[x]$ for a $\tau$-structure in $\mathcal{A}$ of Scott rank $\gamma$. Then there is a Scott process in $L[x]$ which is forced by some
condition in $\Col(\omega, \kappa)$ to be and initial segment of the Scott process of the realization of $\sigma$.
\end{thrm}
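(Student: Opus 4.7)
Writing $\pi$ for the canonical $\Col(\omega, \kappa)$-name in $L[x]$ for the initial segment $\langle \Phi_\alpha(\sigma) : \alpha < \gamma+1 \rangle$ of the Scott process of the realization of $\sigma$, it suffices to produce $q \in \Col(\omega, \kappa)$ and $P \in L[x]$ with $q \Vdash^{L[x]} \pi = \check{P}$. The plan is to analyze the product forcing $\mathbb{P} := \Col(\omega, \kappa) \times \Col(\omega, \kappa)$ inside $L[x]$, writing $\sigma_0, \sigma_1$ and $\pi_0, \pi_1$ for the two copies of $\sigma, \pi$ obtained by interpreting these names on each coordinate of $\mathbb{P}$, and then split on whether some condition of $\mathbb{P}$ forces $\pi_0 = \pi_1$.

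If some $(p_0, p_1) \in \mathbb{P}$ forces $\pi_0 = \pi_1$, I would show $p_0 \Vdash^{L[x]}_{\Col(\omega, \kappa)} \pi \in L[x]$: for any $L[x]$-generic $G \ni p_0$ one may pick $H \ni p_1$ which is $L[x][G]$-generic for $\Col(\omega, \kappa)$, so that $(G, H)$ is $L[x]$-generic for $\mathbb{P}$; then the common value $\pi^G = \pi^H$ in $L[x][G][H]$ lies in $L[x][G] \cap L[x][H] = L[x]$ by Theorem \ref{standprod}. Since $\pi$ is forced below $p_0$ to realize in $L[x]$, the standard forcing fact that such a name is decided to a specific ground-model value by a dense-below set of conditions yields $q \leq p_0$ and $P \in L[x]$ with $q \Vdash \pi = \check{P}$, as required.

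Otherwise no condition of $\mathbb{P}$ forces $\pi_0 = \pi_1$, so by density $\Vdash_{\mathbb{P}} \pi_0 \neq \pi_1$. In $L[x][G][H]$ the structures $\sigma^G$ and $\sigma^H$ are countable members of $\mathcal{A}$ of Scott rank $\gamma$, so Scott's isomorphism theorem identifies equality of their length-$(\gamma+1)$ Scott processes with isomorphism, giving $\Vdash_{\mathbb{P}} \sigma_0 \not\cong \sigma_1$. Passing to the $\omega_1^{L[x]}$-fold countable-support product of $\Col(\omega, \kappa)$ over $L[x]$ furnishes pairwise mutually generic filters $G_\xi$ ($\xi < \omega_1^{L[x]}$), hence pairwise non-isomorphic realizations $\sigma^{G_\xi}$ in the Borel set $\mathcal{A}_\gamma$ of structures in $\mathcal{A}$ of Scott rank $\gamma$. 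Since isomorphism on $\mathcal{A}_\gamma$ is Borel (agreeing with equality of the Borel-computable length-$(\gamma+1)$ Scott processes), Silver's theorem produces a perfect set of pairwise non-isomorphic elements of $\mathcal{A}$ in that extension; the existence of such a perfect set is $\Sigma^1_2$ in $x$, so by Shoenfield absoluteness it reflects to $L[x]$, contradicting the fact (absolute by Remark \ref{manyranks}) that $\mathcal{A}$ is an analytic counterexample to Vaught's Conjecture in $L[x]$.

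The principal technical obstacle is this second alternative: verifying that the iterated product really supplies the required pairwise mutually generic filters extending chosen conditions, and that Silver's dichotomy applies cleanly to the Borel isomorphism relation on $\mathcal{A}_\gamma$. Alternatively one could build the perfect set directly via a fusion on a tree of pairwise incompatible conditions in $\Col(\omega, \kappa)$, exploiting the fact that each condition decides some atomic facts about $\sigma^G$ so that incomparable branches realize structures disagreeing on atomic (and hence Scott-process) information.
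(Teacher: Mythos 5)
Your first alternative is sound and is essentially the paper's own argument: mutual genericity plus Theorem \ref{standprod} puts the common realization in $L[x]$, and a name forced to realize in the ground model is decided to a specific value densely often. The genuine gap is in the second alternative, which you must rule out. Your plan is to take $\omega_1^{L[x]}$ many mutually generic filters, conclude that isomorphism has uncountably many classes among the realizations, and invoke Silver's dichotomy. But $\omega_1^{L[x]}$ is measured in $L[x]$, while ``uncountably many classes'' must hold in the extension where the dichotomy is applied. Since $\Col(\omega,\kappa)$ collapses $\kappa$ to $\omega$, in the crucial case $\kappa \geq \omega_1^{L[x]}$ (for instance $\kappa = \omega_1^{L[x]}$, exactly the case needed for Harrington's theorem) your $\omega_1^{L[x]}$ pairwise non-isomorphic realizations form a \emph{countable} family in the extension and yield no contradiction at all. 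This is precisely why the paper uses the finite-support product $P$ of $(\kappa^{+})^{L[x]}$ many copies of $\Col(\omega,\kappa)$: $P$ factors as a single collapse times a c.c.c.\ forcing, so $(\kappa^{+})^{L[x]}$ survives and becomes the $\omega_1$ of the extension, and failure of agreement on every pair of coordinates then really does give uncountably many (in the extension) distinct countable-length Scott processes of members of $\mathcal{A}$, contradicting Remark \ref{smallin} together with the absoluteness of being a counterexample (Remark \ref{manyranks}). Two further problems with your version: you do not verify that the countable-support product preserves any cardinals (full- or countable-support products of nontrivial forcings tend to collapse cardinals), and $\mathcal{A}_\gamma$ is analytic rather than Borel, so the restriction of isomorphism to it is not literally within the scope of Silver's theorem --- though this last point is repairable by instead applying the perfect set property to the analytic set of codes for length-$(\gamma+1)$ Scott processes, as in Remark \ref{smallin}.

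Separately, your proposed fallback --- a fusion along pairwise incompatible conditions, ``exploiting the fact that each condition decides some atomic facts about $\sigma^G$ so that incomparable branches realize structures disagreeing on atomic (and hence Scott-process) information'' --- does not work. Atomic facts about the particular presentation of $\sigma^G$ on $\omega$ are not isomorphism invariants, so incompatible conditions, and disagreement of atomic diagrams, in no way force the realized structures to be non-isomorphic or to have different Scott processes.
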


\begin{proof}
Let $P$ be the finite support
product of $(\kappa^{+})^{L[x]}$ many copies of $\Col(\omega, \kappa)$.
Forcing with $P$ over $L[x]$ makes $\kappa$ countable, but leaves $\kappa^{+}$
uncountable. Moreover, $\kappa^{+}$ is the $\omega_{1}$ of any $P$-extension.
This follows from a standard $\Delta$-system argument. Alternately, $P$ is forcing-equivalent
to the product of one copy of $\Col(\omega, \kappa)$ with the finite support product of $(\kappa^{+})^{L[x]}$ many
copies of Cohen forcing, the latter of which is c.c.c..


Let $\mu \in L[x]$ be a $\Col(\omega, \kappa)$-name for the Scott process of the realization of $\sigma$.
The name $\mu$ induces $(\kappa^{+})^{L[x]}$ many $P$-names, by copying $\mu$ respectively
into each coordinate of $P$. Since $P$ cannot force the existence of
$(\kappa^{+})^{L[x]}$ many distinct Scott processes of rank $\gamma$ for elements of $\mathcal{A}$, there is a condition $p$
in $P$ forcing that the realizations of $\mu$ in two different coordinates will be the same.
Let $\alpha$ and $\beta$ be two such coordinates, and let $p(\alpha)$ and
$p(\beta)$ be the values of $p$ at these coordinates. Then $p(\alpha)$ and $p(\beta)$ are conditions in
$\Col(\omega, \kappa)$. By Theorem \ref{standprod}, the condition $(p(\alpha), p(\beta))$ forces in $\Col(\omega, \kappa) \times \Col(\omega, \kappa)$
that the realization of $\mu$ in each coordinate will be members of the ground model. It follows that $p(\alpha)$ and $p(\beta)$ each decide all of
$\mu$, as desired.
\end{proof}

Theorem \ref{settheorem} implies Theorem \ref{sec10thrm1}, since if Theorem \ref{sec10thrm1} were false one could let
$\sigma$ be a name for a $\tau$-structure whose Scott process (up to level $\alpha$) is not in $L[x]$, and obtain a contradiction.

Theorems \ref{omega1model} and \ref{alllimits}, along with Corollary \ref{sec10cor2} and Remark \ref{manyranks}, give the following unpublished theorem of Leo Harrington from the 1970's.

\begin{thrm}[Harrington] Suppose that $\tau$ is a countable relational vocabulary and that $\phi \in \cL_{\omega_{1}, \omega}(\tau)$ gives a counterexample to Vaught's Conjecture. Then the the Scott ranks of the models of $\phi$ of cardinality $\aleph_{1}$ are cofinal in $\omega_{2}$.
\end{thrm}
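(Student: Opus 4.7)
The plan is to combine the forcing-absoluteness of Section \ref{fabsection}, which shows that Scott processes of models of $\phi$ arising in forcing extensions lie inside $L[x]$, with the $\aleph_{1}$-model construction of Section \ref{arbcardsec}.  Let $\mathcal{A}$ denote the set of countable models of $\phi$, fix a real $x$ such that $\mathcal{A}$ is $\Sigma^{1}_{1}(x)$, and let $\alpha_{\phi}$ be the quantifier depth of $\phi$.  For each ordinal $\beta$ with $\max\{\omega_{1}, \alpha_{\phi}\} \leq \beta < \omega_{2}$, the aim is to produce a model of $\phi$ of cardinality $\aleph_{1}$ with Scott rank at least $\beta$; this clearly suffices.

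First I would set $\kappa = |\beta|^{L[x]}$, an $L[x]$-cardinal satisfying $\kappa \leq \beta < (\kappa^{+})^{L[x]}$ and also $|\kappa|^{V} \leq \aleph_{1}$ (since $\kappa < \omega_{2}$).  By Remark \ref{manyranks}, some $\gamma \in [\beta, (\kappa^{+})^{L[x]})$ is realized as the Scott rank of a structure in $\mathcal{A}$ in a $\Col(\omega, \kappa)$-extension of $L[x]$.  Theorem \ref{settheorem} then produces a Scott process $\mathcal{P} = \langle \Phi_{\alpha} : \alpha \leq \gamma + 1 \rangle$ in $L[x]$ forced by some condition in $\Col(\omega, \kappa)$ to agree with an initial segment of the Scott process of such a model.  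Each level of $\mathcal{P}$ is countable in the $\Col(\omega, \kappa)$-extension, hence has $L[x]$-cardinality at most $\kappa$ and $V$-cardinality at most $\aleph_{1}$.  Since $\mathcal{P}$ is forced to agree with the Scott process of a genuine $\tau$-structure, the last level $\Phi_{\gamma + 1}$ amalgamates: for any $\tau$-structure $M$ and any ordinal $\delta$, the set $\Phi_{\delta}(M)$ satisfies the amalgamation property of Definition \ref{unidef} because a tuple witnessing the desired extension can always be produced by appending a fresh element of $M$ (or, if the required element already occurs in the longer tuple, by an appropriate choice of the free variable $y$), and this property is absolute between transitive models.  Theorem \ref{omega1model} now yields a model $N$ of $\mathcal{P}$ of cardinality at most $\aleph_{1}$ in $V$.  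Since the Scott process of $N$ extends $\mathcal{P}$, the model $N$ has Scott rank $\gamma \geq \beta$; since $\gamma \geq \alpha_{\phi}$, Theorem \ref{wkh} applied to $N$ and to the countable realization of the forcing name shows that $N \models \phi$; and since $\gamma \geq \omega_{1}$, $N$ must be uncountable, so $|N| = \aleph_{1}$.

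The main obstacle is the simultaneous balancing of $L[x]$-cardinality and $V$-cardinality in the choice of $\kappa$: one needs $(\kappa^{+})^{L[x]}$ to exceed the target rank $\beta$ so that Remark \ref{manyranks} supplies a sufficiently high Scott rank, while simultaneously needing $|\kappa|^{V} \leq \aleph_{1}$ so that the hypothesis of Theorem \ref{omega1model} is met by the resulting Scott process.  Choosing $\kappa = |\beta|^{L[x]}$ resolves this by exploiting the bound $|\beta|^{V} \leq \aleph_{1}$, since then $\kappa < \omega_{2}$ and hence $|\kappa|^{V} \leq \aleph_{1}$.  A secondary technical point, the amalgamation of $\Phi_{\gamma + 1}$, is a short absoluteness argument but is essential, as without it the $\aleph_{1}$-model construction of Section \ref{arbcardsec} cannot be invoked.
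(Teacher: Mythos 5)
Your overall architecture matches the paper's: use the forcing-absoluteness results of Section \ref{fabsection} to pull a long Scott process into $L[x]$ (hence into $V$, with levels of size at most $\aleph_1$), and then feed it to the weaving construction of Section \ref{arbcardsec}. The choice $\kappa = |\beta|^{L[x]}$ to balance the two cardinality requirements is exactly right. However, the step on which everything hinges --- amalgamation of the last level --- rests on a false general claim. It is not true that ``for any $\tau$-structure $M$ and any ordinal $\delta$, the set $\Phi_{\delta}(M)$ satisfies the amalgamation property.'' In Definition \ref{unidef} the hypothesis only says that $\phi$ and $\psi$ agree on their first $m$ variables \emph{at level $\delta$}; the tuples realizing $\phi$ and $\psi$ need not be automorphic, and the level-$\delta$ type of $\bar b \restriction m$ does not determine which level-$\delta$ types of one-point extensions are realized over it --- that is precisely the information recorded at level $\delta+1$. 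Concretely, let $E$ be an equivalence relation with infinitely many classes of size $2$ and infinitely many of size $3$. All singletons have the same level-$1$ formula, but a related pair in a $2$-class and a related pair in a $3$-class have distinct level-$1$ formulas $r_2, r_3$, both projecting to the unique $1$-type. Taking $\phi = r_3$ and $\psi = r_2$ (so $m=1$, $n=2$), no triple $(b_0,b_1,b_2)$ with $(b_0,b_1)$ realizing $r_2$ can have $(b_0,b_y)$ realizing $r_3$, so $\Phi_1(M)$ does not amalgamate. Appending a ``fresh element'' produces \emph{some} one-point extension of $\psi$, not the prescribed one. This is exactly why the paper does not argue this way: it secures amalgamation only at limit levels consisting of isolated paths (Proposition \ref{limitecmodels}), which is why Theorem \ref{Harringtonplus} is stated for limit ordinals.

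That said, your specific instance of the claim is salvageable, because you cut the process at level $\gamma+1$ where $\gamma$ is the \emph{Scott rank} of the countable realization $M$ in the collapse extension. Since $\gamma$ is the rank, $V_{\gamma+1,\gamma+2}$ is injective on $\Phi_{\gamma+2}(M)$ (Proposition \ref{leastfixed}), so $\Phi_{\gamma+2}(M)$ is the maximal completion of $\Phi_{\gamma+1}(M)$ (Remark \ref{fivepointsix}); as $\langle \Phi_{\alpha}(M) : \alpha \leq \gamma+2\rangle$ is a Scott process, Proposition \ref{canmax} then gives that $\Phi_{\gamma+1}(M)$ amalgamates, and this is a statement about the countable set $\Phi_{\gamma+1}$ and the functions $H^{n}_{\gamma+1}$ that is absolute to $V$. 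With the justification replaced by this argument (or by routing through a limit level and isolated paths as the paper does), your proof goes through, and in fact gives a somewhat more direct derivation of cofinality than the paper's, since it bypasses Theorem \ref{alllimits} entirely; what it does not give is the paper's stronger conclusion that every sufficiently large limit ordinal below $\omega_2$ is realized as a Scott rank.
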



The arguments we have given here give a slightly stronger version of Harrington's theorem, as follows. The theorem follows from Theorems \ref{omega1model} and \ref{alllimits}, Corollary \ref{sec10cor2}, Proposition \ref{limitecmodels} and Remark \ref{manyranks}, which together show that for all limit ordinals $\beta$ in the interval $(\alpha, \omega_{2})$, $\phi$ has a model of Scott rank $\beta$ whose formulas at level $\beta$ are all isolated. We do not know if the result stated here is new.

\begin{thrm}\label{Harringtonplus}
  Suppose that $\tau$ is a countable relational vocabulary and that $\phi \in \cL_{\omega_{1}, \omega}(\tau)$ gives a counterexample to Vaught's Conjecture. Let $\alpha$ be the quantifier depth of $\phi$. Then for every limit ordinal $\beta$ in the interval $(\alpha, \omega_{2})$, $\phi$ has a model of Scott rank $\beta$.
\end{thrm}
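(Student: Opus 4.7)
The plan is to reduce to the countable case via the forcing-absoluteness of Section \ref{fabsection}, and then to build a model of cardinality at most $\aleph_1$ using Theorem \ref{omega1model}. Fix a limit ordinal $\beta \in (\alpha, \omega_2^V)$ and let $x \subseteq \omega$ code the analytic set of models of $\phi$. In a $\Col(\omega, |\beta|^V)$-generic extension $V[G]$ of $V$, $\beta$ is countable. Since the assertion that $\phi$ is a counterexample to Vaught's Conjecture is $\Pi^1_2$ in $x$ (Remark \ref{manyranks}) and therefore absolute to forcing extensions by Shoenfield, $\phi$ remains such a counterexample in $V[G]$; in particular $\phi$ has cofinally many countable Scott ranks there, with only countably many isomorphism classes of models of each rank. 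Theorem \ref{alllimits}, applied in $V[G]$, thus supplies a countable model $M \in V[G]$ of $\phi$ of Scott rank exactly $\beta$, and Corollary \ref{sec10cor2} places the Scott process $\mathcal{P} := \langle \Phi_\gamma(M) : \gamma < \beta \rangle$ in $L[x] \subseteq V$.

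Back in $V$, $\mathcal{P}$ is scattered: in $V[G]$ the countable model $M$ yields only countably many paths through $\mathcal{P}$, so $\mathcal{P}$ is scattered in $V[G]$ by the first part of Remark \ref{econext}, and scatteredness is absolute. By the second part of that remark, in $V$ every element of $\bigcup_{\gamma < \beta}\Phi_\gamma$ extends to an isolated path through $\mathcal{P}$. Proposition \ref{limitecmodels} now lets me take $\Phi_\beta$ to be the set of these isolated paths and obtain a Scott process $\langle \Phi_\gamma : \gamma \leq \beta \rangle$ in $V$ whose last level amalgamates, and every extension of which has rank at most $\beta$. The crucial cardinality bound $|\Phi_\beta|^V \leq \aleph_1^V$ follows since each isolated path is determined by its stabilization value at some $\alpha_0 < \beta$, so $|\Phi_\beta|^V \leq |\beta|^V \cdot \sup_{\gamma < \beta}|\Phi_\gamma|^V$; and since $\Col(\omega, |\beta|^V)$ has size at most $\aleph_1^V$ (hence preserves cardinals $\geq \aleph_2^V$), the $V[G]$-countability of each $\Phi_\gamma \in V$ forces $|\Phi_\gamma|^V \leq \aleph_1^V$, while $|\beta|^V \leq \aleph_1^V$ because $\beta < \omega_2^V$.

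Theorem \ref{omega1model} then yields a model $N \in V$ of $\langle \Phi_\gamma : \gamma \leq \beta \rangle$. Its Scott rank is exactly $\beta$: at most $\beta$ by Proposition \ref{limitecmodels}, and at least $\beta$ because the initial segment $\langle \Phi_\gamma : \gamma < \beta \rangle$ coincides with $\langle \Phi_\gamma(M) : \gamma < \beta \rangle$, where $M$ has Scott rank $\beta$, so $V_{\gamma, \gamma+1}$ fails to be injective on $\Phi_{\gamma + 1}$ for each $\gamma < \beta$. Moreover $N \models \phi$ because $\Phi^0_\alpha(N) = \Phi^0_\alpha(M)$, so by Theorem \ref{wkh}, $N$ and $M$ satisfy the same sentences of quantifier depth at most $\alpha$. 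The principal obstacle is the cardinality bookkeeping in the middle step: securing $|\Phi_\beta|^V \leq \aleph_1^V$ so Theorem \ref{omega1model} applies, which hinges on Corollary \ref{sec10cor2} placing $\mathcal{P}$ already in $V$, so that the $V[G]$-countability of its levels translates via a chain-condition argument into the required $V$-cardinality bound.
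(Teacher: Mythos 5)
Your proposal is correct and takes essentially the same route as the paper, which proves this theorem precisely by combining Corollary \ref{sec10cor2} and Remark \ref{manyranks} (to pull the length-$\beta$ Scott process of a model obtained in a collapse extension back into the ground model), Theorem \ref{alllimits}, Proposition \ref{limitecmodels} (extension by the isolated paths), and Theorem \ref{omega1model}; your cardinality bookkeeping for $|\Phi_\beta|$ fills in a step the paper leaves implicit. The one loose phrase is the claim that $M$ ``yields only countably many paths'' through $\mathcal{P}$ --- $M$ only witnesses countably many, and $\mathcal{P}$ could a priori have others --- but the fact you actually need (every formula extends to an isolated path) follows from the counterexample property via the second part of Remark \ref{econext}, which you also cite, so this does not affect the argument.
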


Standard arguments show that if there is a counterexample to Vaught's Conjecture then there is one of quantifier depth at most $\omega$ (in an expanded language).

\begin{remark}\label{hmremark}
  The arguments here also give a proof of Theorem 1 of \cite{HarnikMakkai}, showing that any counterexample to Vaught's Conjecture can be strengthened to a minimal counterexample. The point again is that if $\sigma \in \cL_{\omega_{1}, \omega}(\tau)$ is a counterexample to Vaught's Conjecture, and $\alpha$ is the quantifier depth of $\sigma$, then there is a sentence $\sigma' \in \cL_{\omega_{1}, \omega}(\tau)$ which is the unique member of $\Phi^{0}_{\alpha}(M)$ for uncountably many countable models $M$ satisfying $\sigma$. Then all models of $\sigma'$ are models of $\sigma$, by Theorem \ref{wkh}, and $\sigma'$ is also a counterexample to Vaught's Conjecture. Let $S$ be the set of all countable length Scott processes which have $\sigma'$ as their unique sentence at level $\alpha$ and are initial segments of the Scott process of some model of uncountable Scott rank. Since $\sigma'$ is a counterexample to Vaught's Conjecture, $S$ is not empty. On the other hand, since $\sigma'$ does not have perfectly many countable models, there will be a member of $S$ without incompatible extensions in $S$. Since any extension of this member in $S$ will have the same property, there is such a member of $S$ with successor length. Let $\phi$ be the unique sentence in the last level of this process. Then $\phi$ is a counterexample to Vaught's Conjecture, and all uncountable models of $\phi$ satisfy the same $\cL_{\omega_{1}, \omega}(\tau)$-theory.
\end{remark}


Hjorth \cite{Hjorth} showed that if there exists a counterexample to Vaught's Conjecture, then there is one with no model of cardinality $\aleph_{2}$. Recently, this has been extended by Baldwin, S. Friedman, Koerwien and Laskowski \cite{BaldwinFriedmanKoerwienLaskowski}, who showed (among other things) that if there exists a counterexample to Vaught's Conjecture, then there is one with with the property that for some countable $\cL_{\omega_{1}, \omega}$-fragment $T$, no model of cardinality $\aleph_{1}$ has a $T$-elementary extension.




\end{document}